\newcommand{\ignore}[1]{}
\newtheorem{theorem}{Theorem}[section]
\newtheorem{lemma}[theorem]{Lemma}
\newtheorem{corollary}[theorem]{Corollary}
\newtheorem{proposition}[theorem]{Proposition}
\theoremstyle{definition}
\newtheorem{definition}[theorem]{Definition}
\newtheorem{example}[theorem]{Example}
\theoremstyle{remark}
\newtheorem{remark}[theorem]{Remark}
\newtheorem{assumptions}[theorem]{Assumptions}
\newtheorem{discussion}[theorem]{Discussion}
\theoremstyle{notation}
\numberwithin{equation}{section}
\definecolor{ncs}{rgb}{1.0, 0.83, 0.0}
\definecolor{usc}{rgb}{1.0, 0.7, 0.0}
\newcommand{\bZ}{{\mathbb{Z}}}
\definecolor{grey}{rgb}{0.75,0.75,0.75}
\definecolor{orange}{rgb}{1.0,0.5,0.5}
\definecolor{brown}{rgb}{0.5,0.25,0.0}
\definecolor{pink}{rgb}{1.0,0.5,0.5}
\newcommand{\hlt}{\mathrm {ht\, }}
\newcommand{\dm}{\mathrm {dim }}
\newcommand{\Supp}{\mbox{\rm{Supp}} }
\newcommand{\fM}{{\mathfrak m}}
\newcommand{\fp}{\mathfrak{p}}
\newcommand{\cP}{{\mathcal P}}
\newcommand{\la}{{\lambda}}
\newcommand{\lra}{{\longrightarrow}}
\newcommand{\E}[1]{\mbox{E}_{R}(R/#1)}
\newcommand{\K}{\mathbb{K}}
\newcommand{\m}{\mathfrak{m}}
\begin{document}

\title[Bass numbers of local cohomology of cover ideals of graphs]{Bass numbers of local cohomology of cover ideals of graphs}

\author[J. \`Alvarez Montaner]{Josep \`Alvarez Montaner}
\address{Departament de Matem\`atiques\\
Universitat Polit\`ecnica de Catalunya\\ Av. Diagonal 647,
Barcelona 08028, SPAIN} \email{Josep.Alvarez@upc.edu}

\author[F. Sohrabi]{Fatemeh Sohrabi}

\address{Faculty of Sciences, Departament of Mathematics\\
University of Mohagheh Ardabili\\56199-11367 Ardabil, Iran} \email{fateme\_sohrabi@uma.ac.ir}

\thanks{The first author was partially 
supported by Generalitat de Catalunya 2017SGR-932 project
and Spanish Ministerio de Econom\'ia y Competitividad
MTM2015-69135-P. He is also a member of the Barcelona Graduate School of Mathematics (BGSMath, MDM-2014-0445) %Thesecond author was partially supported by .
}

%\keywords {Betti numbers, Monomial ideals}

%\subjclass[2000]{Primary 13D45, 13N10}

%%%%Commands added by Yanagawa
\newcommand{\ba}{\mathbf{a}}
\newcommand{\bb}{\mathbf{b}}
\newcommand{\bc}{\mathbf{c}}
\newcommand{\be}{\mathbf{e}}
\newcommand{\gmod}{\operatorname{* mod}}
\newcommand{\Sq}{\operatorname{Sq}}
\newcommand{\relint}{\operatorname{rel-int}}

\newcommand{\KK}{\mathbb{K}}
\newcommand\const{\underline{\K}}
\newcommand\Dcom{\mathcal{D}^\bullet}
\newcommand\cExt{{\mathcal Ext}}
\newcommand\Db{{\mathsf D}^b}
\newcommand\GG{\mathbb G}
\newcommand{\bA}{\mathbf{A}}
\newcommand{\EE}{\mathbb{E}}
\newcommand\LL{\mathbb L}

\begin{abstract}
We develop splitting techniques to study Lyubeznik numbers of cover ideals of graphs which allow us to describe them for large families of graphs including forests, cycles, wheels and cactus graphs. More generally we are able to compute all the Bass numbers and the shape of the injective resolution of local cohomology modules by considering the connected components of the corresponding subgraphs.  Indeed our method gives us a very simple criterion for the vanishing of these local cohomology modules in terms of the connected components.
\end{abstract}

\maketitle

\section{Introduction}
Let $G=(V_G,E_G)$ be a finite graph in the set of vertices $V_G=\{x_1,\dots , x_n\}$ and the set of edges $E_G$
We will assume that the graph is simple so no multiple edges between vertices or loops are allowed. In order to study such a graph from an algebraic point of view one may associate a monomial ideal in the polynomial ring $R=\K[x_1,\dots,x_n]$, where $\K$ is a field. There are several ways to do so but the most used in the literature are:

$\cdot$ {\bf Edge ideal:} $I(G)=(x_i x_j \hskip 2mm | \hskip 2mm \{x_{i},x_{j}\}\in E_G)$.

$\cdot$ {\bf Cover ideal:} $J(G)=\underset{\{x_{i},x_{j}\}\in E_G}{\bigcap} (x_i,x_j)$.

\noindent That is, the edges of the graph describe the generators of the edge ideal and the primary components of the cover ideal. 
A common theme in combinatorial commutative algebra has been to understand graph theoretic properties of $G$ from the algebraic properties of the associated ideal and vice versa.

For instance, a lot of attention has been paid to the study of free resolutions of edge ideals and its associated invariants such as Betti numbers, projective dimension or Castelnuovo-Mumford regularity. Notice that edge ideals are a very particular class of squarefree monomial ideals so one can use some of the techniques available in this context such as Hochster's formula \cite{Hoc}, splitting techniques \cite{EK, FHV} or discrete Morse theory \cite{BW}. 
Despite these efforts, a full description of these algebraic invariants is only known for a few families of graphs. 

The aim of this paper is to study Bass numbers of local cohomology modules supported on cover ideals of graphs. 
The choice of cover ideals instead of edge ideals is because free resolutions and local cohomology modules of any squarefree monomial ideal are related via Alexander duality (see \cite{Mu00}, \cite{Mil_thesis},\cite{Mi00}, \cite{AV14}) and, in the particular case we are considering,  edge and cover ideals are Alexander dual to each other. Moreover it seems more natural to use the primary decomposition of an ideal if we want to use the Mayer-Vietoris sequence to study local cohomology modules.

In order to compute the Bass numbers of local cohomology modules of any squarefree monomial ideal we may refer to the work of K.~Yanagawa \cite{Ya01}  or the work of the first author with his collaborators in \cite{Al00}, \cite{Al04_2}, \cite{AV14}. Indeed,  one can use the computational algebra system {\tt Macaulay 2} \cite{GS} to compute them as it has been shown in \cite{AF13}. We point out that, using the restriction functor, we may just  reduce to the case of studying Bass numbers with respect to the homogeneous maximal ideal, which are also known as Lyubeznik numbers \cite{Ly93}.

The methods presented in \cite{AV14} may seem quite appropriate  for the case of cover ideals of graphs. Namely, in order to compute  Lyubeznik numbers, one has to describe the linear strands of the free resolution of the corresponding edge ideal and compute the homology groups of a complex of $\KK$-vector spaces associated to these linear strands. However, even though one may find some explicit free resolutions of edge ideals of graphs in the literature, it seems quite complicated to give closed formulas for the Lyubeznik numbers even for simple families of graphs. 

In this paper we shift gears and we present some splitting techniques that would allow us to compute the Lyubeznik numbers of large families of graphs without any previous description of its local cohomology modules or equivalently, the free resolution of the corresponding edge ideals. 
The idea behind these splitting techniques is to relate the Lyubeznik numbers of our initial graph to the Lyubeznik numbers of the subgraph obtained by removing a vertex. Indeed, the Lyubeznik table remains invariant when we remove a whisker or even a $3$ or $4$-cycle. Moreover we can control the Lyubeznik table when we remove degree two vertices or a dominating vertex. To compute all the Lyubeznik numbers of any given graph in a fixed number of vertices is out of the scope of this work but  we can reduce enormously the number of cases that we have to consider by a simple inspection of the shape of the graph. 
More generally, we can compute all the Bass numbers of local cohomology modules just considering subgraphs of our initial graph. In particular we can describe the linear strands of the injective resolution of these modules. The structure of these injective resolutions depend on the number 
of connected components of the corresponding subgraphs.  Quite nicely, we deduce a vanishing criterion  for local cohomology modules depending of these connected components of the subgraphs.

We should mention that, in general, Lyubeznik numbers depend on the characteristic of the base field. However, all the methods we develop here are independent of the characteristic, meaning that the Lyubeznik numbers of a graph will depend on the characteristic if and only if the Lyubeznik numbers of the graph obtained after removing a vertex also depend on the characteristic.

The organization of this paper is as follows. In Section \ref{local_cohomology} we introduce all the basics on local cohomology supported on squarefree monomial ideals and its injective resolution.  In particular we introduce Bass numbers and how to describe them using the graded pieces of the composition of local cohomology modules. Since we can always reduce to the case of Lyubeznik numbers we briefly recall  in Subsection \ref{Lyubeznik1} its definition and the main properties we are going to use throughout this work.  In Subsection \ref{free} we review the relation between Lyubeznik numbers and linear strands of the Alexander dual ideal. Finally, in Subsection \ref{split1} we propose the notion of MV-splitting (see Definition \ref{def_MV})  together with an application of the long exact sequence of local cohomology modules (see Discussion \ref{MVsplit}) that will be crucial later on.
The reason of working in the general framework of squarefree monomial ideals is that, even though we want to study cover ideals of graphs, we will have to leave this context when applying these techniques. Moreover, all these splitting methods could be also applied for any squarefree monomial ideal.

In Section \ref{Lyubeznik} we focus on the study of Lyubeznik numbers of cover ideals of graphs.  Our first result is Theorem \ref{main} in which we describe the Lyubeznik table associated to the cover ideal of a simple connected graph for which the MV-splitting satisfies some extra conditions. 
These conditions are naturally satisfied when we consider splitting vertices and thus we specialize to this case. In Proposition \ref{whisker} prove that the Lyubeznik table remains invariant after removing vertices of degree one.  In  Proposition \ref{handle} we prove that the Lyubeznik table is also invariant if we remove a handle, which is a $3$ or $4$-cycle having a degree two vertex. More generally, we describe in Proposition \ref{deg_two} and Corollary \ref{deg_two_2} the Lyubeznik table of any graph as long as we find degree two splitting vertices. In this way we can apply recursion to reduce the computation to the case of a smaller graph. 

In Subsection \ref{Lyubeznik3} we apply these splitting techniques to compute the Lyubeznik table of some families of examples. We prove that trees have trivial Lyubeznik table and we deduce a formula for the case of forests. Any cone of a graph, for example a wheel,  also has trivial Lyubeznik table. The results on degree two vertices allow us to compute the case of cycles and, more generally, the family of graphs obtained by joining cycles in such a way that we can still find degree two vertices that we can remove in order to simplify the graph. Indeed, after removing whiskers and handles we may  consider cycles joined by paths or sharing edges. Such an example would be the case of cactus graphs or cycles with chords.

In Section \ref{Bass} we study all the Bass numbers of the cover ideal of a graph  by considering the Lyubeznik numbers of the corresponding subgraphs.
In Subsection \ref{Bass1}  we pay attention to a class of graphs (that include forests and Cohen-Macaulay graphs)  whose local cohomology modules have a linear injective resolution. In particular we give a closed formula for these Bass numbers in Theorem \ref{Bass_trivial}. Quite surprisingly we provide in Proposition \ref{max} a vanishing criterion for the local cohomology modules in terms of the number of connected components of the subgraphs. Using Alexander duality it also gives a formula for the projective dimension of the edge ideal of such a graph.
We also study the injective resolution of local cohomology modules of graphs obtained by joining cycles in Subsection \ref{Bass2}.
Finally, we also provide a vanishing criterion for the local cohomology modules associated to the corresponding subgraphs in Proposition \ref{vanishing_cycle} and Proposition \ref{vanishing_cycle2}.

\section{Bass numbers of local cohomology modules} \label{local_cohomology}
Throughout this section we will assume the general framework of a squarefree monomial ideals in a polynomial ring $R=\K[x_1,\dots,x_n]$ with coefficients over a field $\K$. Namely, a squarefree monomial ideal $J\subseteq R$ is generated by monomials of the form ${\bf x^{\alpha}}:= x_1^{a_1}\cdots
x_n^{a_n},\hskip 2mm {\rm where}\hskip 2mm {\bf \alpha}=(a_1,\dots , a_n) \in
\{0,1\}^n$. Its minimal primary decomposition is given in terms
of { face ideals} ${\fp_{\alpha}}:= \langle x_i\hskip 2mm | \hskip
2mm a_i \neq 0 \rangle, \hskip 2mm  {\bf \alpha}\in \{0,1\}^n.$
For simplicity we will denote the homogeneous maximal
ideal $\fM:=\fp_{{\bf 1}}=(x_1,\dots,x_n)$, where ${\bf 1}=(1,\dots,1)$. As usual,  we denote $|\alpha|= a_1
+\cdots+a_n$ and $\varepsilon_1,\dots, \varepsilon_n$ will be
the standard basis of $\bZ^n$. The {\bf Alexander dual} of the ideal $J$ is the squarefree
monomial ideal $J^{\vee}\subseteq R$ defined as $J^{\vee}=({\bf x^{\alpha_1}},\dots , {\bf x^{\alpha_s}})$ associated to the minimal primary decomposition $J= \fp_{\alpha_1}\cap \cdots \cap \fp_{\alpha_s}$.

\vskip 2mm 

Let $\bZ^{\alpha}\subseteq \bZ^n$ be the coordinate space spanned by
$\{\varepsilon_i \hskip 2mm | \hskip 2mm a_i=1\}$, $\alpha\in \{0,1\}^n$.
The {\bf restriction} of $R$ to the face ideal $\fp_\alpha\subseteq R$ is
the $\bZ^{\alpha}$-graded $\K$-subalgebra of $R$ $$R_{\fp_\alpha}:=\K[x_i \hskip 2mm | \hskip 2mm
a_i=1].$$ 

%The restriction to $\fp_\alpha$ of a $\bZ^n$-graded module
%$M$ is the $R_{[\fp_\alpha]}$-module $$M_{[\fp_\alpha]}:= \bigoplus_{\beta \in \bZ^{\alpha}} \hskip 2mm
%M_\beta$$

\vskip 2mm

Let $J= \fp_{\alpha_1}\cap \cdots \cap \fp_{\alpha_s}$ be the minimal primary decomposition of a squarefree
monomial ideal $J\subseteq R$.  Then, the restriction of $J$ to the
face ideal $\fp_\alpha$ is the squarefree monomial ideal $$
J_{\fp_\alpha}=\bigcap_{\alpha_j \leq \alpha} \fp_{\alpha_j}\subseteq R_{\fp_\alpha}.$$
Moreover, the restriction of a local cohomology module is
$$[H_J^r(R)]_{\fp_\alpha} = H_{J_{\fp_\alpha}}^r(R_{\fp_\alpha})$$
Roughly speaking, restriction gives us a functor that plays the role
of the localization functor. For details and further considerations we refer to \cite{Mi00}.

\vskip 2mm 

A key fact in its study is that 
local cohomology modules $H_J^r(R)$ supported on monomial ideals are
$\bZ^n$-graded modules. Indeed, these modules satisfy some nice properties since they
fit, modulo a shifting by ${\bf 1}$, into the category of {\bf straight}
(resp. {\bf 1}-determined) modules introduced by K.~Yanagawa
\cite{Ya01} (resp. E.~Miller \cite{Mi00}). In what follows we are going to introduce the basic notions that we are going
to use in this work. Most of them can be found in textbooks such as \cite{BH} and \cite{MS05} or the lecture notes \cite{Alv13}.

\vskip 2mm

In order to give a module structure to the straight
module $H_J^r(R)$ %$H_J^r(R) = \bigoplus_{\beta \in \bZ^{n}} \hskip 2mm [H_J^r(R)]_\beta$, 
we have to describe:

\begin{itemize}
\item[$\cdot$] The graded pieces
$[H_J^r(R)]_{-\alpha}$ for all $\alpha \in \{0,1\}^n$.
\item[$\cdot$] The multiplication morphisms: \hskip 2mm  $\cdot x_i:
[H_J^r(R)]_{-\alpha} \longrightarrow [H_J^r(R)]_{-(\alpha -
\varepsilon_i) }.$
\end{itemize}

\noindent This structure has been described by N.~Terai \cite{Te} and M.~Musta\c{t}\u{a}
\cite{Mu00} in terms of some simplicial complexes associated to the monomial ideal $J$.  The approach considered in \cite{AGZ03}
gives an interpretation in terms of the components appearing in the minimal primary decomposition of $J$ which will be more convenient for our purposes. 

Let $\cP_J$ be the partially ordered set consisting of the sums of ideals in the minimal primary decomposition of $J$ ordered by reverse inclusion. Namely, if $J= \fp_{\alpha_1}\cap \cdots \cap \fp_{\alpha_s}$ is the minimal primary decomposition we have that any ideal $J_p \in \cP_J$ is a certain sum
$J_p= \fp_{\alpha_{i_1}}+ \cdots +\fp_{\alpha_{i_j}}$ and, since the sum of face ideals is a face ideal we have that $J_p=\fp_\alpha$ for some $\alpha \in \{0,1\}^n$.
In what follows we will just denote by $\fp_\alpha$, or simply $\alpha$, the elements of $\cP_J$.

 Let $1_{\cP_J}$ be a terminal element that we add to the poset. To any $\alpha \in \cP_J$ we may consider the {\bf order complex} associated to the  subposet
$(\alpha, 1_{{\cP_J}}):=\{z\in \cP_J \mid\quad \alpha < z < 1_{\cP}\}$ and the dimensions of the reduced simplicial homology  groups 
$$m_{r,\alpha}:=\dim_{\K}\widetilde{H}_{|\alpha|-r-1} ((\alpha,1_{{\cP_J}}); \K). $$
Then, the graded pieces of the local cohomology modules of $J$ can be described as follows:
\begin{equation}\label{pieces}
[H_J^r(R)]_{-\alpha}= \bigoplus_{\alpha \in \cP_J} [H_{\fp_\alpha}^{|\alpha|}(R)^{m_{r,\alpha}}]_{-\alpha}
\end{equation}

The category of straight modules is a category with enough injective modules. Indeed, the indecomposable injective objects are the shifted injective envelopes 
$E_\alpha:={^{\ast}\E{\fp_{\alpha}}({\bf 1})}$, $\alpha \in \{0,1\}^n$, and every graded injective module is isomorphic to a unique (up to order) direct sum of indecomposable injectives.
It follows that the {\bf  minimal $\bZ^n$-graded injective resolution} of a local cohomology module $H_J^r(R)$,
%local cohomology module $H_{J}^{r}(R)$ 
is an exact sequence:
$$\mathbb{I}_{\bullet}(H_J^r(R)): \hskip 3mm \xymatrix{ 0 \ar[r]& H_{J}^{r}(R) \ar[r] & I_{0}
\ar[r]^{d^{0}}&  I_1 \ar[r]^{d^1}&\cdots \ar[r]& I_{m} \ar[r]^{d^m}&
0},$$ where the $p$-th term is $$I_p = \bigoplus_{\alpha \in
{\mathbb{Z}^n}}
E_\alpha^{\mu_{p}(\fp_\alpha,H_J^r(R))}$$ and the invariants defined by ${\mu_{p}(\fp_\alpha,H_J^r(R))}$ are the {\bf Bass numbers} of $H_J^r(R)$.  Given an integer $\ell$, the {\bf
$\ell$-linear strand} of $\mathbb{I}_{\bullet}(H_J^r(R))$ is the complex:
$$\mathbb{I}_{\bullet}^{<\ell>}(H_J^r(R)): \hskip 3mm \xymatrix{ 0 \ar[r]&
I_{0}^{<\ell>} \ar[r]& I_{1}^{<\ell>} \ar[r]& \cdots
\ar[r]& I_{m}^{<\ell>} \ar[r]& 0},$$ where $$I_p^{<\ell>} =
\bigoplus_{|\alpha|=p+\ell} E_\alpha^{\mu_{p}(\fp_\alpha, H_J^r(R))},$$

\begin{remark}
The ($\bZ^n$-graded) Bass numbers coincide with the usual Bass numbers in the
minimal injective resolution of  $H_J^r(R)$ as it was proved by S.~Goto and K.~I.~Watanabe in \cite{GW78}. Indeed they provided a method to compute 
the Bass numbers with respect to any prime ideal. Namely, given any prime ideal $\fp \in
{\rm Spec} R$, let $\fp_\alpha$ be the largest face ideal contained in
$\fp$. If $\hlt(\fp/\fp_\alpha)=s$ then $\mu_p(\fp_\alpha,H_J^r(R))=
\mu_{p+s}(\fp,H_J^r(R))$.

\end{remark}

% The {\bf  $\bZ^n$-graded injective dimension} of $H_J^r(R)$ is $$^{\ast}\id_R(H_J^r(R))={\rm max} \{j
%\hskip 2mm | \hskip 2mm I_j\neq 0 \}.$$ Notice that in general we have ${^\ast}{\rm id}_R
%(H_J^r(R)) \leq {\rm id}_R (H_J^r(R))$, the later being the usual injective dimension. 

% H.~B.~Foxby \cite{Fox} introduced
 
% The {\bf $\bZ^n$-graded  small support} of $H_J^r(R)$, that we denote as ${^\ast}{\rm supp}_R (H_J^r(R))$, is the set of face ideals in the support of $H_J^r(R)$
%that have, at least,  a Bass number different from zero (see \cite{Fox} for the original definition in the non graded case).  Although the minimal primes of ${^\ast}{\supp}_R (H_J^r(R)) $ and ${\Supp}_R (H_J^r(R))$ coincide (see \cite[Corollary 5.2]{AV14}), we can only ensure the inclusion ${^\ast}{\supp}_R (H_J^r(R)) \subseteq {\Supp}_R (H_J^r(R))$
%and thus $$\dim_R \hskip 1mm {^\ast}{\supp}_R (H_J^r(R)) \leq \dim_R \hskip 1mm{\Supp}_R (H_J^r(R)). $$  
%%We can also define the $\bZ^n$-graded small support, that we denote
%%${^\ast}\supp_R (H_J^r(R))$, if we only consider face ideals in the support of $H_J^r(R)$
%%that have, at least, a Bass number different from zero.
% 
 
 \vskip 2mm
 
 The Bass numbers of straight modules, and local cohomology modules in particular, were already studied and described in \cite{Ya01}. The approach that we will use in this work is using the graded pieces of the composition of local cohomology modules. Namely, using  \cite[Corollary 3.6]{AV14} (see also \cite{Al05}), we have:

 \begin{proposition}
 Let $J\subseteq R$ be a squarefree monomial ideal and $\fp_\alpha \subseteq R$ be a face ideal, $\alpha\in\{0,1\}^n$. Then, the Bass numbers of the local cohomology module $H_J^r(R)$ with respect to $\fp_\alpha$ are
 $$\mu_p(\fp_\alpha, H_J^r(R))= \dim_{\K} [H^p_{\fp_\alpha}(H_J^r(R))]_{-\alpha}.$$ In particular, the Bass numbers with respect to the homogeneous maximal ideal $\fM$ are
 $$\mu_p(\fM_\alpha, H_J^r(R))= \dim_{\K} [H^p_\fM(H_J^r(R))]_{-\bf 1}.$$ 
 \end{proposition}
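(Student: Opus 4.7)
The plan is to compute $H^p_{\fp_\alpha}(H_J^r(R))$ by applying $\Gamma_{\fp_\alpha}$ to the minimal $\bZ^n$-graded injective resolution $\mathbb{I}_\bullet=\mathbb{I}_\bullet(H_J^r(R))$ introduced earlier, and then extracting the graded piece of degree $-\alpha$. Since $H_J^r(R)$ belongs to the category of straight modules (shifted by $\bf 1$), such a minimal resolution exists and is unique, with $p$-th term
$$I_p=\bigoplus_{\beta\in\{0,1\}^n}E_\beta^{\,\mu_p(\fp_\beta,H_J^r(R))}.$$
The functor $\Gamma_{\fp_\alpha}(-)$ preserves $\bZ^n$-gradings, sends injectives to injectives, and its derived functors are the local cohomology modules $H^p_{\fp_\alpha}(-)$; therefore $[H^p_{\fp_\alpha}(H_J^r(R))]_{-\alpha}$ is the $p$-th cohomology of the complex of $\K$-vector spaces $[\Gamma_{\fp_\alpha}(\mathbb{I}_\bullet)]_{-\alpha}$.

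The two ingredients I need are the behaviour of the building blocks $E_\beta$ under both operations. First, because $E_\beta$ is supported only at $\fp_\beta$, multiplication by any element of $\fp_\alpha\setminus\fp_\beta$ is an isomorphism on $E_\beta$, so $\Gamma_{\fp_\alpha}(E_\beta)=E_\beta$ when $\fp_\alpha\subseteq\fp_\beta$ (equivalently $\alpha\leq\beta$) and vanishes otherwise. Second, from the description of the indecomposable injectives in the straight (or $\bf 1$-determined) framework, the graded piece $[E_\beta]_{-\alpha}$ is one-dimensional over $\K$ precisely when $\alpha\geq\beta$ and zero otherwise. Combining the two conditions forces $\beta=\alpha$, which yields
$$\bigl[\Gamma_{\fp_\alpha}(I_p)\bigr]_{-\alpha}\;\cong\;[E_\alpha^{\,\mu_p(\fp_\alpha,H_J^r(R))}]_{-\alpha}\;\cong\;\K^{\mu_p(\fp_\alpha,H_J^r(R))}.$$

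The main step, and the one that carries the weight of the argument, is to prove that all the differentials in the complex $[\Gamma_{\fp_\alpha}(\mathbb{I}_\bullet)]_{-\alpha}$ vanish, so that the cohomology equals the complex itself. This is the content of minimality of the injective resolution in the category of straight modules: the image of $d^p\colon I_p\to I_{p+1}$ lies in the (graded) essential submodule of $I_{p+1}$ relative to the socle, and the degree $-\alpha$ part of $E_\alpha$ consists exactly of socle elements with respect to $\fp_\alpha$ that cannot contain any non-zero image from a different $E_\beta$. Concretely, a non-zero component $E_\alpha\to E_\alpha$ at the socle level in bidegree $-\alpha$ would split off a direct summand from the resolution, contradicting minimality. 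Once this vanishing is established we obtain
$$[H^p_{\fp_\alpha}(H_J^r(R))]_{-\alpha}\;\cong\;\K^{\mu_p(\fp_\alpha,H_J^r(R))},$$
which is the desired identity, and taking $\alpha=\bf 1$ yields the statement for the Lyubeznik numbers. The main obstacle in this plan is precisely the minimality argument in the third paragraph; everything else reduces to bookkeeping with the straight module structure.
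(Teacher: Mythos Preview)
Your argument is correct and self-contained. The paper itself does not prove this proposition but defers to \cite[Corollary 3.6]{AV14} and \cite{Al05}; your direct computation via the minimal $\bZ^n$-graded injective resolution is precisely the standard route and is essentially what lies behind those references.

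A small sharpening of the step you flag as the main obstacle: the reason the $E_\alpha\to E_\alpha$ block of $d^p$ must vanish is that ${}^*\mathrm{Hom}_R(E_\alpha,E_\alpha)_0\cong\K$, i.e.\ every degree-zero graded endomorphism of $E_\alpha$ is a scalar. Hence that block is literally a matrix over $\K$, and any nonzero entry would let you split off an acyclic summand $E_\alpha\xrightarrow{\sim}E_\alpha$, contradicting minimality. Your ``socle'' phrasing in the third paragraph is slightly imprecise (for $\alpha\neq\mathbf{1}$ the module $E_\alpha$ has no $\fM$-socle, and the $\fp_\alpha$-socle argument only works after localization), but the ``split off a summand'' justification you give immediately afterwards is exactly right and suffices on its own.
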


 \begin{remark} \label{Bass_restriction}
  Bass numbers behave well with respect to the restriction functor so we may always assume that the face ideal $\fp_\alpha$ is the maximal ideal. Namely, we have 
$$\mu_p(\fp_\alpha, H_J^r(R))=\mu_p(\fp_\alpha R_{\fp_\alpha}, [H_J^r(R)]_{\fp_\alpha}).$$
 \end{remark}

\subsection{Lyubeznik numbers} \label{Lyubeznik1}
In the seminal works of C.~Huneke and R.~Y.~Sharp \cite{HS93} and G.~Lyubeznik \cite{Ly93} it is proven that
the Bass numbers of local cohomology modules are all finite. This prompted G.~Lyubeznik  to introduce a new set 
of invariants defined as follows:

\vskip 2mm

Let $A$ be a noetherian local ring that admits a surjection from
an $n$-dimensional regular local ring $(R,\mathfrak{m})$ containing its residue field $\K$, and $J \subseteq R$ be the kernel
of the surjection. Then, the Bass numbers 
$$\lambda_{p,i} (A):= \mu_p(\fM, H_{J}^{n-i}(R))$$ 
depend only on $A$, $i$ and $p$, but not on the choice
of $R$ or the surjection $R\lra A$. More generally, all the Bass numbers $\mu_p(\fp, H_{J}^{n-i}(R))$ are invariants of the local ring $A$ as it was proved later on in \cite{Al04}. Bass numbers behave well with respect to completion so we may always assume that $A$ is a quotient of a formal power series ring $R$.  Considering a squarefree monomial ideal as an ideal in the polynomial or the formal power series ring makes no difference since 
the Bass numbers of the corresponding  local cohomology modules coincide.  Is for this reason that we will keep considering, for simplicity, just the case of $R$ being a polynomial ring.

\vskip 2mm

 Lyubeznik numbers satisfy $\la_{d,d}(A)\neq 0$ and $\la_{p,i}(A) \ne 0$ implies  $0 \le p \le i \le d$, where $d=\dm  A$.  
%
%the following properties:
%
%\begin{itemize}
%\item[i)]     $\la_{p,i}(A) \ne 0$ implies  $0 \le p \le i \le d$.
%%\item[ii)]    $\la_{p,i}(A)=0$ if $p>i$.
%\item[ii)]   $\la_{d,d}(A)\neq 0$.
%\item[iii)]   {\it Euler characteristic:} $$\sum_{0\leq p,i \leq d} (-1)^{p-i} \la_{p,i}(A)=1.$$
%\end{itemize}
%
A way to collect these invariants is by means of 
the so-called {\bf Lyubeznik table}:
$$\Lambda(A)  = \left(
                    \begin{array}{ccc}
                      \la_{0,0} & \cdots & \la_{0,d}  \\
                       & \ddots & \vdots \\
                       &  & \la_{d,d} \\
                    \end{array}
                  \right)
$$ and we say that the Lyubeznik table is {\bf trivial} if $ \la_{d,d}=1$ and the rest of these invariants vanish.

The highest Lyubeznik number $\la_{d,d} (A)$ has an interesting interpretation in terms of the so-called {\bf Hochster-Huneke graph}
defined in \cite{HH},
which is the graph whose vertices are the minimal primes of $A$ and we have an edge between two vertices $\fp$ and $\mathfrak{q}$ if and only if ${\rm ht}(\fp + \mathfrak{q})=1$. The following result was proved by G.~Lyubeznik \cite{Ly06} when $\KK$  is a 
positive characteristic field,  and a characteristic- free proof was given by W.~Zhang \cite{Zha07}. To avoid technicalities in the statement of the result we will restrict ourselves to the case of squarefree monomial ideals in a polynomial ring.% where no coefficients are involved in the generators of the ideal. 

\begin{theorem}
Let $I \subseteq R=\K[x_1, \ldots, x_n]$ be a squarefree monomial ideal and $A=R/I$.
The highest Lyubeznik number $\lambda_{d,d}(A)$ equals the connected components of the Hochster-Huneke graph of $A$.
\end{theorem}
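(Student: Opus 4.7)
The starting point is the identity
\[
\lambda_{d,d}(A) \;=\; \mu_d(\fM, H_I^{h}(R)) \;=\; \dim_\K \bigl[H_\fM^d(H_I^{h}(R))\bigr]_{-\mathbf{1}}
\]
from the proposition above, where $h := n-d = \hlt I$. First I would reduce to the equidimensional case: writing $I = I' \cap I''$ with $I' := \bigcap\{\fp_{\alpha_i} : |\alpha_i|=h\}$ collecting the minimal primes of minimum height (which are exactly the vertices of the Hochster--Huneke graph $\Gamma$), a Mayer--Vietoris long exact sequence together with the fact that $H^h_{\fp_{\alpha_j}}(R) = 0$ whenever $|\alpha_j|>h$ shows that $[H_\fM^d(H_I^h(R))]_{-\mathbf{1}}$ agrees with $[H_\fM^d(H_{I'}^h(R))]_{-\mathbf{1}}$. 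So we may assume $I$ is equidimensional of height $h$.

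The main technical tool is the following concentration result: for every $\beta \in \{0,1\}^n$, setting $N_\beta := H^{|\beta|}_{\fp_\beta}(R)$, the Grothendieck composition spectral sequence with the splitting $\fM = \fp_\beta + \fp_{\mathbf{1}-\beta}$, together with the direct computation $H^{n-|\beta|}_{\fp_{\mathbf{1}-\beta}}(N_\beta) \cong E_R(R/\fM)$, yields
\[
\bigl[H_\fM^r(N_\beta)\bigr]_{-\mathbf{1}} \;\cong\; \begin{cases} \K & \text{if } r = n - |\beta|, \\ 0 & \text{otherwise.}\end{cases}
\]
Combining this with the Mayer--Vietoris spectral sequence
\[
\widetilde E_1^{a,b} \;=\; \bigoplus_{|S|=a+1} H^b_{\fp_{\alpha_S}}(R) \;\Longrightarrow\; H^{a+b}_I(R)
\]
(where $\alpha_S := \bigvee_{i\in S}\alpha_i$) and applying $H_\fM^\bullet$, each nonzero entry $N_{\alpha_S}$ contributes a single copy of $\K$ in the $(-\mathbf{1})$-piece, in homological degree $r=n-|\alpha_S|$.

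Assembling these contributions and tracking the differentials, the piece $[H_\fM^d(H_{I'}^h(R))]_{-\mathbf{1}}$ identifies with the $H^0$ of a complex of $\K$-vector spaces of the form
\[
0 \longrightarrow \K^{\#V(\Gamma)} \longrightarrow \K^{\#E(\Gamma)} \longrightarrow \cdots
\]
where the $a=0$ spot corresponds to singleton subsets (the vertices of $\Gamma$), the $a=1$ spot corresponds to pairs $\{i,j\}$ with $|\alpha_i \vee \alpha_j| = h+1$ (the edges of $\Gamma$), and the differential is the simplicial coboundary of $\Gamma$ viewed as a one-dimensional simplicial complex. Since $H^0$ depends only on the $1$-skeleton, we conclude
\[
\lambda_{d,d}(A) \;=\; \dim_\K H^0(\Gamma;\K) \;=\; \#\{\text{connected components of }\Gamma\}.
\]
The main obstacle is controlling all of the differentials in the composed spectral sequence (Mayer--Vietoris followed by $H_\fM^\bullet$): one must verify that in the $(-\mathbf{1})$-graded piece, only the initial augmented cochain complex of $\Gamma$ contributes to the $H^0$-part computing $\lambda_{d,d}$, so that the higher simplicial data coming from $|S|\ge 3$ or from subsets with $|\alpha_S|\ge h+2$ does not interfere with the final answer.
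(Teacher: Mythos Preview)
The paper does not prove this theorem; it only records it and cites Lyubeznik \cite{Ly06} (positive characteristic, via $F$-modules) and Zhang \cite{Zha07} (characteristic-free), both of which treat arbitrary complete local rings with methods far removed from monomial combinatorics. So there is no ``paper's proof'' to compare against. Your approach, by contrast, is tailored to the squarefree monomial case and tries to extract $\lambda_{d,d}$ directly from the $\bZ^n$-graded Mayer--Vietoris machinery of \cite{AGZ03}; if carried through, it would yield an elementary self-contained argument in this setting, which is more in the spirit of the rest of the paper than the cited references.

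However, as written the proposal is a sketch with a real gap, and you identify it yourself. Two concrete points. First, your indexing of the Mayer--Vietoris spectral sequence is off: with $|S|=a+1$ and $b=|\alpha_S|$, the term $H^b_{\fp_{\alpha_S}}(R)$ abuts to $H^{\,b-a}_I(R)$, not $H^{a+b}_I(R)$. (Check the two-prime case $I=\fp_1\cap\fp_2$ with $|\alpha_1\vee\alpha_2|=h+1$: the Mayer--Vietoris long exact sequence forces $H^{h+1}_{\fp_1+\fp_2}$ into $H^h_I$, not $H^{h+2}_I$.) With the correct convention, vertices sit at $(0,h)$ and edges at $(-1,h+1)$, and both feed into $H^h_I$. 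Second, and more seriously, the step you call ``the main obstacle'' is exactly the heart of the argument: you must show that after applying $H^d_\fM$ and taking the $(-\mathbf 1)$-piece, only the vertex/edge data of $\Gamma$ survives. One clean way to do this is to observe that every module in sight is supported on $V(I)$ and hence has dimension $\le d$, so $H^d_\fM$ is right exact on them; then the filtration pieces of $H^h_I$ coming from $|S|\ge 2$ are supported in codimension $\ge h+1$, hence have dimension $\le d-1$ and are killed by $H^d_\fM$, while the connecting map from the edge layer to the vertex layer becomes precisely the simplicial coboundary of $\Gamma$. Until that is made precise (including why triples and higher contribute nothing to the cokernel), you do not have a proof.
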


\vskip 2mm

Another property that we are  going to use in this work is the following 
Thom-Sebastiani type formula for the case of squarefree monomial ideals that was proved in \cite{AY18}.

\begin{proposition}\label{dual_join}
Let $I \subseteq R=\K[x_1, \ldots, x_m]$ and  $J \subseteq
S=\K[y_1, \ldots, y_n]$ be squarefree monomial ideals in two
disjoint sets of variables and set $T= \K[x_1, \ldots, x_m, y_1, \ldots, y_n]$.
Then, the Lyubeznik numbers of $T/IT\cap JT$  have the following form:

\begin{itemize}
\item[i)]   If either the height of $I$ or the height of $J$ is  $1$, then $T/IT \cap JT$ has trivial Lyubeznik table.

\item[ii)]  If both the height of $I$ and the height of $J$ are  $\geq 2$, then we have:
%% KY added the second expression of "Thom-Sebastiani type formula".
\begin{eqnarray*}
\lambda_{p,i}(T/IT \cap JT) &=& \lambda_{p,i}(T/IT) + \lambda_{p,i}(T/JT) +
\sum_{\substack{q+r=p+\dim T\\j+k=i+\dim T-1}} \lambda_{q,j}(T/IT) \lambda_{r,k}(T/JT)\\
&=& \lambda_{p-n,i-n}(R/I) + \lambda_{p-m,i-m}(S/J) +
\sum_{\substack{q+r=p\\j+k=i-1}} \lambda_{q,j}(R/I) \lambda_{r,k}(S/J).
\end{eqnarray*}
\end{itemize}
\end{proposition}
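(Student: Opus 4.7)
The plan is to run Mayer-Vietoris for local cohomology, exploiting the K\"unneth decomposition over $\K$ available because $R$ and $S$ involve disjoint variables. From $V(IT \cap JT) = V(IT) \cup V(JT)$ and $V(IT) \cap V(JT) = V(IT+JT)$ one obtains
$$\cdots \to H^r_{IT+JT}(T) \xrightarrow{f_r} H^r_{IT}(T) \oplus H^r_{JT}(T) \to H^r_{IT \cap JT}(T) \to H^{r+1}_{IT+JT}(T) \to \cdots$$
with $H^r_{IT}(T) \cong H^r_I(R) \otimes_\K S$, $H^r_{JT}(T) \cong R \otimes_\K H^r_J(S)$, and (by K\"unneth over $\K$) $H^r_{IT+JT}(T) \cong \bigoplus_{a+b=r} H^a_I(R) \otimes_\K H^b_J(S)$. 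Under the hypothesis $\hlt I, \hlt J \geq 2$ only summands with $a, b \geq 2$ contribute, and by Musta\c{t}\u{a}'s formula $[H^b_J(S)]_\beta = 0$ for $b \geq 1$ whenever $\beta \geq 0$ componentwise (the relevant reduced simplicial cohomology is that of a non-face). Combined with straightness this forces $H^a_I \otimes H^b_J$ to be supported in $\Z^{m+n}$-degrees $(\alpha, \beta)$ with at least two strictly negative entries in each of $\alpha$ and $\beta$; the target $H^{a+b}_I(R) \otimes S$ requires $\beta \geq 0$, and symmetrically for $R \otimes H^{a+b}_J(S)$, so $f_r=0$ and Mayer-Vietoris reduces to short exact sequences
$$0 \to H^r_{IT}(T) \oplus H^r_{JT}(T) \to H^r_{IT\cap JT}(T) \to H^{r+1}_{IT+JT}(T) \to 0. \qquad (\ast)$$

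The crucial step is to show that $(\ast)$ splits as $T$-modules by the same disjoint-support mechanism: the three pieces live in pairwise disjoint regions of $\Z^{m+n}$ (requiring respectively $\beta \geq 0$; $\alpha \geq 0$; and each of $\alpha, \beta$ carrying $\geq 2$ negative entries), and multiplication by any single variable shifts exactly one coordinate up by $1$, so it cannot bridge a ``$\geq 2$ negative entries'' gap. Hence the natural graded $\K$-linear section of $(\ast)$ is automatically $T$-linear. With the splitting in hand, Bass numbers at $\fM_T = \fM T + \fM' T$ are additive, and each summand is handled by the K\"unneth identity $H^p_{\fM_T}(M \otimes_\K N) \cong \bigoplus_{c+d=p} H^c_\fM(M) \otimes_\K H^d_{\fM'}(N)$ for an $R$-module $M$ and $S$-module $N$. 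Evaluating at the graded piece $-{\bf 1}$ and using that $S$ is Cohen-Macaulay with $[H^n_{\fM'}(S)]_{-{\bf 1}_n} = \K$ collapses the contribution of $H^r_I(R) \otimes S$ to $\mu_{p-n}(\fM, H^r_I(R)) = \lambda_{p-n,\,i-n}(R/I)$, where $i = \dim T - r$; a symmetric identity holds for the $JT$-summand, and for the mixed summands one obtains the product expression $\sum_{c+d=p} \lambda_{c,j}(R/I)\,\lambda_{d,k}(S/J)$ after matching $a+b = m+n-i+1$ with $j+k = i-1$ via $j = m-a$ and $k = n-b$. Assembling the three contributions reproduces the formula of part (ii).

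The main obstacle is verifying in full detail that the $T$-action preserves the disjoint support decomposition, so that the splitting of graded $\K$-vector spaces lifts to a splitting of $T$-modules. For part (i), when $\hlt I = 1$, the analysis of $f_r$ still produces the sequence $(\ast)$, but the support of $H^1_I(R)$ carries only one strictly negative $\alpha$-entry, so a single $T$-step from $\alpha_j = -1$ to $\alpha_j = 0$ can now bridge the submodule and quotient regions and $(\ast)$ genuinely fails to split. The non-trivial connecting maps in the long exact sequence of $H^*_{\fM_T}$ then cancel out precisely the ``spurious'' Lyubeznik numbers that the formula of (ii) would otherwise predict, leaving a trivial Lyubeznik table. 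Alternatively, one reads $\lambda_{d,d}=1$ off the Hochster-Huneke graph (whose top-dimensional component is connected through the codimension-one piece coming from $I$) and uses a parallel analysis of the connecting maps to force the remaining Lyubeznik numbers to vanish.
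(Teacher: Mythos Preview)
The paper does not give its own proof of this proposition; it is quoted from \cite{AY18}, where the argument proceeds via the linear-strand interpretation of Lyubeznik numbers recalled in Subsection~\ref{free} and the tensor-product structure of the minimal free resolution of the Alexander dual $(IT\cap JT)^\vee = I^\vee T + J^\vee T$. Your Mayer--Vietoris/K\"unneth approach is a genuinely different route, and for part~(ii) it works: the support analysis showing $f_r=0$ is correct, and the key step---that the graded $\K$-splitting of $(\ast)$ is automatically $T$-linear because a single variable cannot bridge a ``$\geq 2$ negative entries'' gap in either the $x$- or the $y$-block---is a clean argument that bypasses free resolutions entirely. Once $(\ast)$ splits, the K\"unneth computation of $H^\bullet_{\fM_T}$ on each summand and the bookkeeping via $j=m-a$, $k=n-b$ are accurate. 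Compared with the linear-strand proof, your argument is more self-contained (no Alexander duality), at the cost of the somewhat delicate module-theoretic splitting step.

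Your treatment of part~(i), however, is not a proof. You correctly identify that the splitting of $(\ast)$ fails when $\hlt I = 1$; indeed, already for $I=(x_1)$ and $J=(y_1,y_2)$ one computes $H^2_{IT\cap JT}(T)\cong \K[x_1^{\pm1}]\otimes_\K H^2_J(S)$, a genuine nonsplit extension of $\K[x_1]\otimes H^2_J(S)$ by $H^1_{(x_1)}(\K[x_1])\otimes H^2_J(S)$. But the assertion that ``the non-trivial connecting maps in the long exact sequence of $H^*_{\fM_T}$ then cancel out precisely the spurious Lyubeznik numbers'' is exactly the content of part~(i), and you offer no mechanism for it. The Hochster--Huneke alternative handles only $\lambda_{d,d}$, and you again defer the vanishing of the remaining $\lambda_{p,i}$ to an unspecified ``parallel analysis of the connecting maps''. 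To close this gap you would need either to exhibit those connecting morphisms explicitly and show they are isomorphisms in the relevant degrees, or to switch to a different argument---for instance the Alexander-dual side as in \cite{AY18}, where the presence of a degree-one generator in $I^\vee$ (coming from the height-one minimal prime of $I$) controls the linear strands of $I^\vee T + J^\vee T$ directly.
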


The following particular case will be very useful later on.

\begin{corollary}\label{disjoint_trivial}
Let $I \subseteq R=\K[x_1, \ldots, x_n]$ be a squarefree monomial ideal admitting a decomposition $I=I_1 \cap \cdots \cap I_c$
in disjoint sets of variables such that  $\dim R/I_j=d$ and $\Lambda(R/I_j)$ is trivial for  $j=1,\dots , c$. Then 
$$\lambda_{d-2k,d-k}(R/I)= {c \choose k+1} \hskip 5mm {\rm for} \hskip 5mm k=0,\dots , c -1$$
and the rest of Lyubeznik numbers are zero.
\end{corollary}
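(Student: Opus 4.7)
The plan is to argue by induction on the number of components $c$. For the base case $c=1$, the statement is just the hypothesis: $\Lambda(R/I_1)$ is trivial of dimension $d$, so $\lambda_{d,d}(R/I_1)=1=\binom{1}{1}$ and all other Lyubeznik numbers vanish.

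For the inductive step, I would split $I = I_1 \cap I'$ with $I' := I_2 \cap \cdots \cap I_c$. Since $I_1$ and $I'$ still live in disjoint sets of variables, Proposition \ref{dual_join}(ii) applies with the role of $R$ there played by the polynomial ring $R_1$ in the variables of $I_1$, and the role of $S$ played by the polynomial ring $R'$ in the variables of $I'$; both $\hlt I_1$ and $\hlt I'$ are at least two since the trivial-table hypothesis forces each component to have height at least two. By the inductive hypothesis applied inside $R'$, the Lyubeznik table of $R'/I'$ satisfies $\lambda_{d'-2k,d'-k}(R'/I') = \binom{c-1}{k+1}$ for $k=0,\ldots,c-2$, with $d'=\dim R'/I'$, and all other numbers vanish. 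Substituting into Proposition \ref{dual_join}(ii), the three summands contribute respectively: a single $1$ at $(d,d)$ from the trivial table of $R/I_1$; $\binom{c-1}{k+1}$ at $(d-2k,d-k)$ for $k=0,\ldots,c-2$ from the Lyubeznik table of $R/I'$; and a convolution term which, because $\lambda_{q,j}(R_1/I_1)$ is supported only at the single point $(q,j)=(d_1,d_1)$ with $d_1:=\dim R_1/I_1$, reduces to a shifted copy of $\Lambda(R'/I')$ contributing $\binom{c-1}{k}$ at $(d-2k,d-k)$ for $k=1,\ldots,c-1$. Adding these and invoking Pascal's identity $\binom{c-1}{k+1}+\binom{c-1}{k}=\binom{c}{k+1}$ yields the claimed formula; in particular, the new nonzero entry at $k=c-1$ is produced entirely by the convolution term, which accounts for $\binom{c-1}{c-1}=1=\binom{c}{c}$.

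The step requiring most care is the index bookkeeping: one must verify that all three contributions in Proposition \ref{dual_join}(ii) land on the single antidiagonal $\{(d-2k,d-k)\}$, with no extraneous nonzero entries. This rests on the dimension identities $d_1 = d-\dim R'$ and $d' = d-\dim R_1$ (so $d_1+d'=2d-n$) together with the structural fact that in the setting of this paper each $I_j$ is a height-two face ideal sitting in its own two variables, so that the convolution indices $(d_1+d'-2\ell,\,d_1+d'+1-\ell)$ simplify precisely to $(d-2(\ell+1),\,d-(\ell+1))$. Once this alignment is checked, the recursion closes cleanly via the single application of Pascal's rule described above.
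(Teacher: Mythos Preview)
Your approach is the same as the paper's: induct on $c$ and apply Proposition~\ref{dual_join} at each step. The paper's proof is very terse (it checks $c=2$ and says ``then we proceed using induction''), so your detailed bookkeeping and the Pascal-rule recursion are a genuine addition rather than a deviation.

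Two small corrections to the justifications you give in the last paragraph. First, the claim that ``the trivial-table hypothesis forces each component to have height at least two'' is not correct: a height-one monomial ideal (a single variable) also has trivial Lyubeznik table. Second, and more to the point, the alignment of the convolution term does \emph{not} rest on each $I_j$ being ``a height-two face ideal sitting in its own two variables''. In the applications (e.g.\ forests) the $I_j$ are cover ideals of trees and typically involve many more than two variables. What is actually needed is only that $\hlt I_j = 2$ for every $j$: writing $m=\dim R_1$, $n'=\dim R'$, $h=\hlt I_j$, one has $d_1=m-h$, $d'=n'-h$, $d=n-h$, hence $d_1+d'=n-2h=d-h$, and the simplification
\[
(d_1+d'-2\ell,\; d_1+d'+1-\ell)=(d-2(\ell+1),\; d-(\ell+1))
\]
holds precisely when $h=2$. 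This height-two assumption is implicit in the corollary (the formula $\lambda_{d-2k,d-k}$ is only correct in that case) and is satisfied in every use the paper makes of it; with that in place, your argument goes through as written.
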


\begin{proof}
First we notice that the matrices $\Lambda(R/I_i)$ have the same size for all  $i$. 
In the case that $c =2$ we have, using Proposition \ref{dual_join},  $\lambda_{d,d}=2$ and $\lambda_{d-2,d-1}=1$. Then we proceed using  induction on the number of components. 
\end{proof}

A general formula for the case of $c$ disjoint sets of variables could be worked out but we will just focus  on finding the smallest integer $i$ for which there exist $p$ such that $\lambda_{p,i}(R/I)\neq 0$.

\begin{corollary}\label{disjoint_top}
Let $I \subseteq R=\K[x_1, \ldots, x_n]$ be a squarefree monomial ideal admitting a decomposition $I=I_1 \cap \cdots \cap I_c$
in disjoint sets of variables. Let $i_j$ be the smallest integer for which there exist $p$ such that  $\lambda_{p,i_j}(R/I_j)\neq 0$ for  $j=1,\dots , c$. Then,  the smallest $i$ for which there exist $p$ such that $\lambda_{p,i}(R/I)\neq 0$ is  
$i=(i_1+\cdots + i_c)+(c-1). $
\end{corollary}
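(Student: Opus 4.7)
The plan is to proceed by induction on the number of components $c$. The base case $c=1$ is immediate: the smallest index for $R/I_1$ is $i_1$ by definition. For the inductive step, I would set $I' = I_1 \cap \cdots \cap I_{c-1}$ inside $R' = R_1 \otimes \cdots \otimes R_{c-1}$ with $n' = \dim R'$, so that $I = I' \cap I_c$ splits across the disjoint sets of variables of $R = R' \otimes R_c$. The induction hypothesis applied to $I'$ then yields
\[ i'_{\min} := (i_1 + \cdots + i_{c-1}) + (c-2) \]
as the smallest index for which some $\lambda_{p,i'_{\min}}(R'/I')$ is nonzero.

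The core of the argument is to plug $I'$ and $I_c$ into the Thom-Sebastiani formula of Proposition \ref{dual_join}(ii), which applies because each $I_j$ (and hence $I'$) has all its minimal primes of height at least $2$, as is automatic for cover ideals of graphs. This gives
\[
\lambda_{p, i}(R/I) = \lambda_{p - n_c, i - n_c}(R'/I') + \lambda_{p - n', i - n'}(R_c/I_c) + \sum_{\substack{q + r = p \\ j + k = i - 1}} \lambda_{q, j}(R'/I')\, \lambda_{r, k}(R_c/I_c),
\]
and the three summands first become nonzero at $i = i'_{\min} + n_c$, $i = i_c + n'$ and $i = i'_{\min} + i_c + 1 = (i_1 + \cdots + i_c) + (c-1)$ respectively. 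Using $i_j \leq \dim R_j/I_j \leq n_j - h(I_j) \leq n_j - 2$ one obtains $n_c \geq i_c + 2$ and, summing over $j < c$, $n' \geq i'_{\min} + c$, so both "pure" thresholds strictly exceed the cross-term threshold.

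It follows that only the cross term contributes at the critical index $i = (i_1 + \cdots + i_c) + (c-1)$; picking the summand with $(j,k) = (i'_{\min}, i_c)$ produces a product of two positive Lyubeznik numbers for suitable $(q, r)$, and nonnegativity of Lyubeznik numbers prevents cancellation in the sum, so $\lambda_{q+r,\, (i_1 + \cdots + i_c) + (c-1)}(R/I) \neq 0$. For any smaller $i$ all three summands vanish, so the induction closes. The only real subtlety I anticipate is verifying that the cross-term threshold is strictly below the two pure thresholds, which is exactly what the strict inequalities above ensure (and which in turn rests on the height assumption $h(I_j) \geq 2$); the rest is bookkeeping.
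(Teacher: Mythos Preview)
Your argument is correct and follows essentially the same route as the paper: induct on $c$, apply Proposition~\ref{dual_join} to the two-factor splitting $I = I' \cap I_c$, and identify the cross term as the one that first produces a nonzero Lyubeznik number. Your version is in fact more careful than the paper's, which simply asserts that $i_1+i_2+1$ is the smallest index without checking the two pure summands; your verification via $i_j \le n_j - 2$ fills exactly that gap (note, though, that this relies on $\hlt I_j \ge 2$, which is the standing hypothesis throughout the paper rather than something intrinsic to the corollary as stated).
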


\begin{proof}
In the case that $c =2$, let $q$ and $r$ be integers such that $\lambda_{q,i_1}(R/I_1)\neq 0$, $\lambda_{r,i_2}(S/I_2)\neq 0$. Then  
$\lambda_{q+r,i_1+i_2+1}(T/I_1T+I_2T)\neq 0$ using Proposition \ref{dual_join} and it gives the smallest integer $i$ satisfying this property. Then we proceed using  induction on the number of components. 
\end{proof}

\subsection{Local cohomology modules and free resolutions} \label{free}
A way to interpret Lyubeznik numbers for the case of squarefree monomial ideals  is in terms of the linear strands of the free resolution of the Alexander dual of the ideal.  This approach was given in \cite{AV14} and further developed in \cite{AY18} and we will briefly recall it here.

\vskip 2mm

Let $J^\vee$ be the Alexander dual of a squarefree monomial ideal $J\subseteq R$. Its 
 minimal  {\bf $\bZ$-graded free resolution} is an
exact sequence of free ${\mathbb{Z}}$-graded $R$-modules:
$$\mathbb{L}_{\bullet}(J^\vee): \hskip 3mm \xymatrix{ 0 \ar[r]& L_{m}
\ar[r]^{d_{m}}& \cdots \ar[r]& L_1 \ar[r]^{d_1}& L_{0} \ar[r]& J^\vee
\ar[r]& 0}$$  where the $j$-th term is of the form $$L_j =
\bigoplus_{\ell \in {\mathbb{Z}}}
R(-\ell)^{\beta_{j,\ell}(J^\vee)},$$ and the matrices of the
morphisms $d_j: L_j\longrightarrow L_{j-1}$ do not contain
invertible elements.  The $\bZ$-graded {\bf Betti numbers} of $J^\vee$
are the invariants $\beta_{j,\ell}(J^\vee)$. Given an integer $r$, the
{\bf $r$-linear strand} of $\mathbb{L}_{\bullet}(J^\vee)$ is the complex:
$$\mathbb{L}_{\bullet}^{<r>}(J^\vee): \hskip 3mm \xymatrix{ 0 \ar[r]&
L_{n-r}^{<r>} \ar[r]^{d_{n-r}^{<r>}}& \cdots \ar[r]& L_1^{<r>}
\ar[r]^{d_1^{<r>}}& L_{0}^{<r>} \ar[r]& 0},$$ where $$L_j^{<r>} =  R(-j -r)^{\beta_{j,j+r}(J^\vee)} ,$$ and the
differentials $d_j^{<r>}: L_j^{<r>}\longrightarrow L_{j-1}^{<r>}$
are the corresponding components of $d_j$. 

\vskip 2mm

We point out that these differentials can be described using the so-called {\bf monomial
matrices} introduced by E.~Miller in \cite{Mi00} (see also \cite{MS05}).  These are matrices with scalar entries that keep track
of the degrees of the generators of the summands in the source and
the target.  Now  we construct a complex of $\K$-vector spaces
$$\mathbb{F}_{\bullet}^{<r>}(J^{\vee})^{\ast}: \hskip 3mm \xymatrix{ 0 &
{\underbrace{\K^{\beta_{n-r,n}(J^\vee) }}_{\deg 0}}\ar[l]& \cdots \ar[l]& {\underbrace{\K^{\beta_{1,1+r}(J^\vee)}}_{\deg n-r-1} }\ar[l]&
{\underbrace{\K^{\beta_{0,r}(J^\vee)} }_{\deg n-r}} \ar[l]& 0 \ar[l]}$$
where the morphisms are given by the transpose of the corresponding monomial matrices and thus we reverse the indices of the complex.
Then, the Lyubeznik numbers are described by means of the homology groups of these complexes. Namely, the result given in \cite[Corollary 4.2]{AV14} is the following characterization
 \begin{equation}
 \lambda_{p,n-r}(R/J)= {\rm
dim}_{\K} H_{p}(\mathbb{F}_{\bullet}^{<r>}(J^{\vee})^{\ast}).
\end{equation}

\subsection{Mayer-Vietoris splitting} \label{split1}
A successful technique used in the study of free resolutions of monomial ideals 
was developed by S.~Eliahou and M.~Kervaire in \cite{EK}  and refined  by C.~Francisco, H.~T.~H\`a and  A.~Van Tuyl  in \cite{FHV} under the terminology of {\bf splittings} of monomial ideals and {\bf Betti splittings} respectively.

\vskip 2mm

An analogous technique can be used to study local cohomology modules.

\begin{definition}\label{def_MV}
Let $J \subseteq R$ be a squarefree monomial ideal. We say that the decomposition $J=L\cap K$ is a MV-splitting if the Mayer-Vietoris sequence 
$$\cdots \lra H^r_{L+K}(R) \lra H^r_{L}(R)\oplus  H^r_{K}(R) \lra  H^r_{J}(R) \lra  H^{r+1}_{L+K}(R) \lra \cdots$$
splits into short exact sequences
$$0\lra H^r_{L}(R)\oplus H^r_{K}(R) \lra  H^r_{J}(R) \lra  H^{r+1}_{L+K}(R) \lra 0$$ for all $r$.
\end{definition}

\begin{remark}
Using Alexander duality, we have that this notion is equivalent to the fact that $J^\vee$ admits a Betti splitting $J^{\vee}=L^{\vee} + K^{\vee}$ in the sense of \cite{FHV}, 
which means that the $\bZ^n$-graded Betti numbers satisfy
$$\beta_{i,\alpha}(J^{\vee})= \beta_{i,\alpha}(L^{\vee})+\beta_{i,\alpha}(K^{\vee})+\beta_{i-1,\alpha}(L^{\vee}\cap
K^{\vee})$$ 
\end{remark}

Certainly we have a MV-splitting if the $\bZ^n$-graded morphisms $ H^r_{L+K}(R) \lra H^r_{L}(R)\oplus  H^r_{K}(R)$ are zero for all $r$.
Sufficient conditions for this vanishing can be given in terms of the posets of sums of ideals associated to $L$, $K$ and $L+K$. 
 The following result, which uses the terminology of Equation (\ref{pieces}), can be understood as a reinterpretation of  \cite[Theorem 2.3]{FHV}.
 
 \begin{proposition}\label{posets1}
 Let $J=L\cap K$ be a decomposition of a squarefree monomial ideal $J \subseteq R$. Consider the posets $\cP_L, \cP_K$ and $\cP_{L+K}$
 associated to the primary decompositions of the ideals $L,K$ and $L+K$ respectively. Assume that  $m_{r,\alpha}(L+K)\neq 0$ implies
 $m_{r,\alpha}(L)= m_{r,\alpha}(K)=0$ for any $r$ and any $\alpha \in \{0,1\}^n$. Then the decomposition $J=L\cap K$ is a MV-splitting.
 \end{proposition}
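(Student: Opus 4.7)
The MV-splitting condition is equivalent to the vanishing, for every $r$, of the connecting morphism
$\varphi^{r} : H^{r}_{L+K}(R) \longrightarrow H^{r}_{L}(R) \oplus H^{r}_{K}(R)$
in the Mayer-Vietoris long exact sequence. Both source and target are straight modules, so $\varphi^{r}$ is $\bZ^{n}$-graded and it suffices to prove that $\varphi^{r}_{-\alpha} = 0$ for each $\alpha \in \{0,1\}^{n}$.

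My first attempt would be a direct attack by induction on $|\alpha|$. Using Equation (\ref{pieces}), the graded piece $[H^{r}_{J}(R)]_{-\alpha}$ decomposes as a direct sum of $\K$-vector spaces $\K^{m_{r,\beta}(J)}$ indexed by $\beta \in \cP_{J}$ with $\beta \leq \alpha$; moreover, multiplication by $x_{j}$ for $j \in \supp \alpha$ sends the $\beta$-summand to zero when $j \in \supp \beta$ and maps it isomorphically onto the $\beta$-summand at degree $-(\alpha-\varepsilon_{j})$ otherwise. For $v \in [H^{r}_{L+K}(R)]_{-\alpha}$, the identity $x_{j}\varphi^{r}(v) = \varphi^{r}(x_{j}v) = 0$ (the latter by the inductive hypothesis) combined with this multiplication rule pins down $\varphi^{r}(v)$ to lie inside the $\alpha$-summand $\K^{m_{r,\alpha}(L)} \oplus \K^{m_{r,\alpha}(K)}$ of the target, and the hypothesis kills this summand whenever $m_{r,\alpha}(L+K) \neq 0$.

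The delicate point, and the main obstacle, is the case where $m_{r,\alpha}(L+K) = 0$ while $[H^{r}_{L+K}(R)]_{-\alpha} \neq 0$ because some $\beta \lneq \alpha$ with $m_{r,\beta}(L+K) \neq 0$ contributes, and where simultaneously $m_{r,\alpha}(L) + m_{r,\alpha}(K) > 0$; the inductive step alone does not force the $\alpha$-component of $\varphi^{r}(v)$ to vanish. To close this gap I would appeal to the Alexander dual reformulation recalled in the remark after Definition \ref{def_MV}: the MV-splitting of $J = L \cap K$ is equivalent to a Betti splitting of $J^{\vee} = L^{\vee} + K^{\vee}$, and the invariants $m_{r,\alpha}(\,\cdot\,)$ correspond, through the dictionary of Section \ref{free}, to $\bZ^{n}$-graded Betti numbers of the Alexander duals. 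Applied to the short exact sequence $0 \to L^{\vee} \cap K^{\vee} \to L^{\vee} \oplus K^{\vee} \to L^{\vee} + K^{\vee} \to 0$ and its long exact sequence of $\Tor$ with $\K$, the hypothesis becomes the familiar sufficient condition that in each bidegree $(i,\alpha)$ either the source $\Tor_{i-1}(L^{\vee}\cap K^{\vee}, \K)_{\alpha}$ or the target $\Tor_{i-1}(L^{\vee}, \K)_{\alpha} \oplus \Tor_{i-1}(K^{\vee}, \K)_{\alpha}$ of the connecting morphism is zero. The resulting Betti splitting transports back via Alexander duality to the desired MV-splitting of $J$; this is precisely the reinterpretation of \cite[Theorem 2.3]{FHV} indicated by the paper.
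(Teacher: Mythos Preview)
Your Alexander duality detour does lead to a valid proof, but it is built on a misreading of Equation~(\ref{pieces}) that manufactures a difficulty not actually present. That equation is \emph{not} saying that $[H^{r}_{J}(R)]_{-\alpha}$ collects contributions $\K^{m_{r,\beta}}$ from all $\beta\le\alpha$ in $\cP_J$. In fact $[H^{|\beta|}_{\fp_\beta}(R)]_{-\alpha}=0$ whenever $\alpha\ne\beta$ in $\{0,1\}^n$: this module is supported in multidegrees $\gamma$ with $\gamma_i<0$ for $i\in\supp\beta$ and $\gamma_i\ge 0$ for $i\notin\supp\beta$, so at $-\alpha$ with $\alpha\in\{0,1\}^n$ it is nonzero only when $\alpha=\beta$. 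Hence $\dim_\K[H^r_J(R)]_{-\alpha}=m_{r,\alpha}$ and nothing more. A quick check: for $J=(x_1,x_2)\cap(x_1,x_3)$ one has $m_{2,(1,1,0)}=m_{2,(1,0,1)}=m_{2,(1,1,1)}=1$, yet $[H^2_J(R)]_{-(1,1,1)}$ is one-dimensional, not three-dimensional as your decomposition would predict.

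With the correct reading the paper's argument is a single sentence: the hypothesis literally says that for each $\alpha$ either the source $[H^r_{L+K}(R)]_{-\alpha}$ or the target $[H^r_L(R)]_{-\alpha}\oplus[H^r_K(R)]_{-\alpha}$ of the graded component $\varphi^r_{-\alpha}$ vanishes, so $\varphi^r=0$. The ``delicate point'' in your third paragraph therefore cannot occur, and no induction or passage through \cite{FHV} is needed. As a minor aside, the map in the $\Tor$ long exact sequence whose vanishing gives a Betti splitting is the one induced by the inclusion $L^\vee\cap K^\vee\hookrightarrow L^\vee\oplus K^\vee$ on $\Tor_i$, not the connecting homomorphism.
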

 
 \begin{proof} 
 The assumptions we are considering are telling us that  $[H_{L+K}^r(R)]_{-\alpha}\neq 0$ implies $[H_{L}^r(R)]_{-\alpha}= [H_{K}^r(R)]_{-\alpha}=0$  by means of Equation (\ref{pieces}),  and thus the $\bZ^n$-graded morphisms $ H^r_{L+K}(R) \lra H^r_{L}(R)\oplus  H^r_{K}(R)$ are zero for all $r$.
 \end{proof}
 
 \begin{corollary} \label{posets2}
 Let $J=L\cap K$ be a decomposition of a squarefree monomial ideal $J \subseteq R$. Assume that the posets $\cP_L, \cP_K$ and $\cP_{L+K}$
 associated to the primary decompositions of the ideals $L,K$ and $L+K$  have no face ideal in common. Then the decomposition $J=L\cap K$ is a MV-splitting.
 \end{corollary}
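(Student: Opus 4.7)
The plan is to deduce Corollary \ref{posets2} as an immediate specialization of Proposition \ref{posets1}. The decisive observation is bookkeeping about where the invariants $m_{r,\alpha}(I)$ can possibly be nonzero. By construction, the direct sum decomposition in Equation (\ref{pieces}) runs exclusively over elements $\fp_\alpha \in \cP_I$, so for $\fp_\alpha \notin \cP_I$ the integer $m_{r,\alpha}(I)$ does not appear in the formula for $H_I^r(R)$, and we may (and do) interpret it as $0$. In other words, a necessary condition for $m_{r,\alpha}(I) \neq 0$ is that $\fp_\alpha$ actually belongs to the poset $\cP_I$.

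With this in hand, the argument is essentially tautological. Suppose $\alpha \in \{0,1\}^n$ is such that $m_{r,\alpha}(L+K) \neq 0$ for some $r$. Then $\fp_\alpha \in \cP_{L+K}$. The hypothesis that $\cP_L, \cP_K$ and $\cP_{L+K}$ have no face ideal in common forces $\fp_\alpha \notin \cP_L$ and $\fp_\alpha \notin \cP_K$, and hence $m_{r,\alpha}(L) = 0 = m_{r,\alpha}(K)$ for every $r$. This is precisely the hypothesis of Proposition \ref{posets1}, which then yields the required vanishing $H^r_{L+K}(R)\to H^r_L(R)\oplus H^r_K(R) = 0$ in each $\bZ^n$-graded piece, and therefore the decomposition $J = L \cap K$ is a MV-splitting.

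There is no real obstacle: the entire proof is a one-line reduction to Proposition \ref{posets1} once one has isolated the fact that $m_{r,\alpha}(I)$ is forced to vanish for $\fp_\alpha \notin \cP_I$. The only mild point worth mentioning in the write-up is which pairwise disjointness conditions among the three posets are actually used, namely $\cP_{L+K} \cap \cP_L = \emptyset$ and $\cP_{L+K} \cap \cP_K = \emptyset$, both of which are subsumed under the stated hypothesis.
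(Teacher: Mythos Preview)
Your proof is correct and matches the paper's intended argument: the corollary is stated without proof precisely because it is an immediate specialization of Proposition \ref{posets1}, and you have spelled out exactly that reduction. Your observation that only the two disjointness conditions $\cP_{L+K}\cap\cP_L=\emptyset$ and $\cP_{L+K}\cap\cP_K=\emptyset$ are actually used is accurate and worth keeping.
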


We want to apply these splitting techniques to the study of the composition of local cohomology modules. The following discussion will be crucial in the rest of this work.

\begin{discussion}\label{MVsplit}
 The degree {\bf - 1} part of the  long exact sequence of local cohomology associated to the short exact sequences
 \begin{equation}\label{MV}
  0\lra H^r_{L}(R)\oplus H^r_{K}(R) \lra  H^r_{J}(R) \lra  H^{r+1}_{L+K}(R) \lra 0
 \end{equation}

\noindent  obtained in a MV-splitting is 
%\begin{align*}
% \cdots \longrightarrow & H^{p-1}_{\m}(H_{L+K}^{r+1}(R))  \xrightarrow{\partial_{p-1}}   H^{p}_{\m}(H_{L}^{r}(R)) \oplus H^{p}_{\m}(H_{K}^{r}(R))  \longrightarrow  H^{p}_{\m}(H_{J_{G}}^{r}(R)) \longrightarrow \\
% & \longrightarrow H^{p}_{\m}(H_{L+K}^{r+1}(R))  \xrightarrow{\partial_p}  
%   H^{p+1}_{\m}(H_{L}^{r}(R)) \oplus H^{p+1}_{\m}(H_{K}^{r}(R))  \longrightarrow \cdots
%\end{align*}
%{\bf UNCOMMENT}
\begin{equation}\label{long}
{\small 
\begin{aligned}
 \cdots \longrightarrow & [H^{p-1}_{\m}(H_{L+K}^{r+1}(R))]_{\bf -1}  \xrightarrow{\partial^r_{p-1}}   [H^{p}_{\m}(H_{L}^{r}(R))]_{\bf -1} \oplus [H^{p}_{\m}(H_{K}^{r}(R))]_{\bf -1}  \longrightarrow  [H^{p}_{\m}(H_{J}^{r}(R))]_{\bf -1} \longrightarrow \\
 & \longrightarrow [H^{p}_{\m}(H_{L+K}^{r+1}(R))]_{\bf -1}  \xrightarrow{\partial^r_p}  
   [H^{p+1}_{\m}(H_{L}^r(R))]_{\bf -1} \oplus [H^{p+1}_{\m}(H_{K}^{r}(R))]_{\bf -1}  \longrightarrow \cdots 
\end{aligned}}
\end{equation}

%\begin{align*}
% \cdots \longrightarrow & [H^{p-1}_{\m}(H_{L+K}^{r+1}(R))]_{\bf -1}  \xrightarrow{\partial_{p-1}}   [H^{p}_{\m}(H_{L}^{r}(R))]_{\bf -1} \oplus [H^{p}_{\m}(H_{K}^{r}(R))]_{\bf -1}  \longrightarrow  [H^{p}_{\m}(H_{J}^{r}(R))]_{\bf -1} \longrightarrow \\
% & \longrightarrow [H^{p}_{\m}(H_{L+K}^{r+1}(R))]_{\bf -1}  \xrightarrow{\partial_p}  
%   [H^{p+1}_{\m}(H_{L}^{r}(R))]_{\bf -1} \oplus [H^{p+1}_{\m}(H_{K}^{r}(R))]_{\bf -1}  \longrightarrow \cdots
%\end{align*}
Equivalently, it is the long exact sequence of $\K$-vector spaces whose dimensions are the corresponding Lyubeznik numbers. Namely,
%{\bf UNCOMMENT}
\begin{equation}\label{long_k}
{\small \begin{aligned}
 \cdots \longrightarrow & \K^{\lambda_{p-1,n-r-1}(R/L+K)}  \xrightarrow{\partial^r_{p-1}}    \K^{\lambda_{p,n-r}(R/L)} \oplus  \K^{\lambda_{p-1,n-r}(R/K)}  \longrightarrow   \K^{\lambda_{p-1,n-r}(R/J)}\longrightarrow \\
 & \longrightarrow  \K^{\lambda_{p,n-r-1}(R/L+K)} \xrightarrow{\partial^r_p}  
    \K^{\lambda_{p+1,n-r}(R/L)} \oplus  \K^{\lambda_{p+1,n-r}(R/K)}  \longrightarrow \cdots
\end{aligned}}
\end{equation}
Therefore, if we want to compute the Lyubeznik numbers of $R/J$ in terms of the Lyubeznik numbers of  $R/L$,  $R/K$ and  $R/L+K$, we need to control the connecting morphisms $\partial^r_p$'s. 

\end{discussion}
Using the methods considered in \cite{AV14} we may give an interpretation of these differentials in terms of linear strands. First, the
short exact sequence (\ref{MV}) corresponds to the short exact sequence of complexes of $\K$-vector spaces
$$0 \longleftarrow  \mathbb{F}_{\bullet}^{<r>}(L^{\vee})^{\ast} \oplus \mathbb{F}_{\bullet}^{<r>}(K^{\vee})^{\ast} \longleftarrow 
\mathbb{F}_{\bullet}^{<r>}(J^{\vee})^{\ast} \longleftarrow \mathbb{F}_{\bullet}^{<r+1>}((L+K)^{\vee})^{\ast}\longleftarrow 0$$
and the long exact sequence (\ref{long}) corresponds to 

{\small \begin{align*}
 \cdots \longleftarrow & H_{p-1}(\mathbb{F}_{\bullet}^{<r+1>}((L+K)^{\vee})^{\ast})  \xleftarrow{\partial^r_{p-1}}   H_{p}(\mathbb{F}_{\bullet}^{<r>}(L^{\vee})^{\ast}) \oplus H_{p}(\mathbb{F}_{\bullet}^{<r>}(K^{\vee})^{\ast})  \longleftarrow  H_{p}(\mathbb{F}_{\bullet}^{<r>}(J^{\vee})^{\ast}) \longleftarrow \\
 & \longleftarrow H_{p}(\mathbb{F}_{\bullet}^{<r+1>}((L+K)^{\vee})^{\ast})   \xleftarrow{\partial^r_p}  
  H_{p+1}(\mathbb{F}_{\bullet}^{<r>}(L^{\vee})^{\ast}) \oplus H_{p+1}(\mathbb{F}_{\bullet}^{<r>}(K^{\vee})^{\ast})  \longleftarrow \cdots
\end{align*}}

\section{Lyubeznik tables of cover ideals of graphs} \label{Lyubeznik}
Let $G=(V_G,E_G)$ be a simple finite graph in the set of vertices $V_G=\{x_1,\dots , x_n\}$ and the set of edges $E_G$. For simplicity we will also assume that $G$ is connected. For a vertex $x_i$, we consider its {\bf neighbour set} $N(x_i)=\{ x_j \in G \hskip 2mm | \hskip 2mm \{x_i,x_j\} \in E_G\}$. The {\bf degree} of a vertex is the cardinal of its neighbour set.

Let  $J(G) \subseteq R$ be the cover ideal of $G$ where $R=\K[x_1,\dots,x_n]$ is a polynomial ring with coefficients in a field $\K$. In this section we will develop MV-splitting techniques to study the Lyubeznik numbers of  $R/J(G)$. 
To start with, we recall that since $J(G)$ is a pure height two ideal, all the entries in the main diagonal of the Lyubeznik table are zero except for the highest Lyubeznik number (see \cite{Al00} for details).  

\begin{lemma}\label{highest}
Let $J(G)$ be the cover ideal of a simple connected graph $G$. Then, the highest Lyubeznik number is $\lambda_{d,d}(R/J(G))=1$
\end{lemma}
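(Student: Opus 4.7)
The plan is to invoke the Lyubeznik--Zhang theorem recalled just above, which identifies $\lambda_{d,d}(R/J(G))$ with the number of connected components of the Hochster--Huneke graph of $R/J(G)$. It therefore suffices to show that this graph is connected, and the remaining argument is purely combinatorial.

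First I would unwind the description of the graph. The minimal primary decomposition $J(G) = \bigcap_{\{x_i,x_j\}\in E_G} (x_i,x_j)$ shows that the minimal primes of $R/J(G)$ are in bijection with the edges of $G$, each of height $2$; in particular they are the vertices of the Hochster--Huneke graph. For two such primes $\mathfrak{p}_e=(x_i,x_j)$ and $\mathfrak{p}_{e'}=(x_k,x_l)$, the sum $\mathfrak{p}_e+\mathfrak{p}_{e'}$ is a face ideal whose height equals $3$ when $e$ and $e'$ share an endpoint of $G$ and equals $4$ otherwise. Consequently the Hochster--Huneke graph of $R/J(G)$ is nothing but the line graph $L(G)$: its vertex set is $E_G$, and $e\sim e'$ precisely when $e$ and $e'$ share a vertex of $G$.

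Second I would use the standard combinatorial fact that if $G$ is connected (and has at least one edge), then $L(G)$ is connected. Given any two edges $e,e'\in E_G$, I would pick a path in $G$ joining an endpoint of $e$ to an endpoint of $e'$; consecutive edges along this path share a vertex and are thus adjacent in $L(G)$, so $e$ and $e'$ lie in the same component. Hence $L(G)$ is connected, which by the Lyubeznik--Zhang theorem yields $\lambda_{d,d}(R/J(G))=1$.

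There is no real obstacle here, since the proof is essentially a translation between edges of $G$, height-$2$ face ideals of $J(G)$, and adjacency in the Hochster--Huneke graph. The only point requiring attention is the correct reading of the edge condition in the Hochster--Huneke graph as $\dim R/(\mathfrak{p}_e+\mathfrak{p}_{e'}) = d-1$; once this is fixed, the identification with the line graph and the connectedness conclusion both follow immediately.
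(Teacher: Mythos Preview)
Your proposal is correct and takes essentially the same approach as the paper: both identify the Hochster--Huneke graph of $R/J(G)$ with the line graph of $G$ (the paper simply says its vertices are the edges of $G$ and its edges are pairs of adjacent edges, without naming it) and conclude from the connectedness of $G$. Your version is just more explicit in justifying the adjacency condition via the height computation and in spelling out why the line graph of a connected graph is connected.
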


\begin{proof}
The vertices of the Hochster-Huneke graph  of $J(G)$ correspond to the edges of $G$, and the edges of the Hochster-Huneke graph correspond to adjacent edges of $G$.
Therefore, the Hochster-Huneke graph has just one connected component since the graph $G$ is connected.
\end{proof}
Under these restrictions, the shape of the Lyubeznik table is 
$$\Lambda(R/J(G))  = \left(
                    \begin{array}{ccccc}
                     0&  \la_{0,1} & \cdots & \la_{0,d-1}& \la_{0,d}  \\
                      &  0 & \cdots & \la_{1,d-1}& \la_{1,d}  \\
                      &  & \ddots & \vdots & \vdots\\
                     &   &  & 0 & \la_{d-1,d} \\
                     &   &  &  & 1 \\
                    \end{array}
                  \right)
$$

In the case that $R/J(G)$ is Cohen-Macaulay we have that the Lyubeznik table is trivial (see \cite[Remark 4.2]{Al00}). Recall that, combining the results in \cite{ER98} with \cite{Fro}, we have the following characterization of this property.

\begin{proposition}
Let $G$ be a simple graph. Then the following are equivalent:

\begin{itemize}
 \item[i)] The cover ideal $J(G)$ is Cohen-Macaulay.
  \item[ii)] The edge ideal $I(G)$ has a linear resolution. 
   \item[iii)] The complement graph $G^c$ is chordal.

\end{itemize}

\end{proposition}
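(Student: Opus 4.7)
The plan is to recognize this proposition as a direct combination of two classical results: the Eagon--Reiner theorem from \cite{ER98} and Fröberg's theorem from \cite{Fro}. No new argument is really needed; the task is just to verify that the hypotheses of these theorems line up with the statement.

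First I would observe the Alexander duality between the two ideals associated to $G$. By the definitions recalled at the start of Section \ref{local_cohomology}, the cover ideal $J(G)=\bigcap_{\{x_i,x_j\}\in E_G}(x_i,x_j)$ has minimal primary decomposition whose components are exactly the $\fp_{\alpha}$ with $\alpha=\varepsilon_i+\varepsilon_j$ for $\{x_i,x_j\}\in E_G$. Hence
\[
J(G)^{\vee}=(x_ix_j \mid \{x_i,x_j\}\in E_G)=I(G),
\]
and symmetrically $I(G)^{\vee}=J(G)$. This identification is the bridge that lets us move between the two ideals.

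Next I would invoke the Eagon--Reiner theorem \cite{ER98}, which states that a squarefree monomial ideal $I\subseteq R$ is such that $R/I$ is Cohen--Macaulay if and only if its Alexander dual $I^{\vee}$ has a linear free resolution. Applying this with $I=J(G)$ and $I^{\vee}=I(G)$ yields the equivalence (i)$\Leftrightarrow$(ii). Finally, Fröberg's theorem \cite{Fro} characterises edge ideals with a linear resolution: $I(G)$ has a linear resolution if and only if the complement graph $G^{c}$ is chordal. This gives (ii)$\Leftrightarrow$(iii) and completes the chain of equivalences.

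The only genuine verification required, and the step one must be a little careful with, is the Alexander duality identification $J(G)^{\vee}=I(G)$ in the convention being used in this paper; once that is in hand the proposition is simply the composition of the two cited theorems, so there is no substantive obstacle.
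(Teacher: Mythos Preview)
Your proposal is correct and matches the paper's approach exactly: the paper does not give an independent proof but simply states that the proposition follows by combining the Eagon--Reiner theorem \cite{ER98} with Fr\"oberg's theorem \cite{Fro}, which is precisely the argument you have written out. Your additional verification that $J(G)^{\vee}=I(G)$ makes explicit the Alexander duality step that the paper leaves implicit.
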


Free resolutions of edge ideals have been extensively studied over the last years and we may find in the literature 
several families of Cohen-Macaulay graphs. For example, 

\begin{itemize}
 \item [$\cdot$] {\bf Complete graphs} $K_n$ \cite{Jac04}.
 
 \item [$\cdot$] {\bf Complete bipartite graphs} $K_{n,m}$ and in particular {\bf star graphs} $K_{1,m}$ \cite{Jac04}.
 
 \item [$\cdot$] {\bf Ferrers graphs} \cite{CN}.
\end{itemize}

The simplest examples of ideals with non-trivial Lyubeznik table are
minimal non-Cohen-Macaulay squarefree monomial ideals (see
\cite{Ly}).  The unique minimal non-Cohen-Macaulay squarefree monomial ideal of
pure height two in $R=\K[x_1,\dots,x_n]$ is the cover ideal of the {\bf complement of a cycle}:
$$J({C}^c_n)= (x_1, x_3)\cap\cdots\cap(x_1, x_{n- 1})\cap(x_2, x_4)\cap\cdots\cap(x_2, x_n)\cap(x_3, x_5)\cap\cdots\cap(x_{n- 2}, x_n).$$ Its Lyubeznik table is of the form (see \cite{AV14})
{ $$\Lambda(R/J({C^c_n}))  = \begin{pmatrix}
  0 & 0 & 0 & \cdots & 0 & 1 & 0 \\
    & 0 & 0 & \cdots & 0 & 0 & 0 \\
    &   & 0 & & 0 & 0 & 1 \\
    &   & & \ddots &  & 0 & 0 \\
&   &   & &  & \vdots & \vdots \\
    &   & &  &   & 0 & 0 \\
    &   & &  &   &   & 1
\end{pmatrix}$$}

To provide a full description of all the possible Lyubeznik tables of cover ideals of graphs is completely out of the scope of this work. 
Our aim is to introduce some Mayer-Vietoris splitting techniques that will allow us to compute large families of examples. To such purpose we will follow
the ideas considered in Discussion \ref{MVsplit}. To start with, we consider the case where $J(G)=L\cap K$ is a MV-splitting with the extra assumption that
the Lyubeznik table of $R/K$ is trivial.

\begin{theorem} \label{main}
Let $J(G) \subseteq R$ be the cover ideal of a simple connected graph $G$.
Let $J(G)=L\cap K$ be a MV-splitting such that $\Lambda(R/K)$ is trivial. Then:

\begin{itemize}
\item[i)] If $\Lambda(R/L)$ and $\Lambda(R/L+K)$ are trivial, then $\Lambda(R/J(G))$ is trivial.
\item[ii)] If $\Lambda(R/L+K)$ is trivial, then  $\Lambda(R/J(G))=\Lambda(R/J)$.
\item[iii)] If $\Lambda(R/L)$ is trivial and $$\Lambda(R/L+K)  = \left(
                    \begin{array}{ccc}
                      \cellcolor{usc!20} \la'_{0,0} & \cellcolor{usc!20}\cdots & \cellcolor{usc!20}\la'_{0,d-1}  \\
                       & \cellcolor{usc!20}\ddots & \cellcolor{usc!20}\vdots \\
                       &  & \cellcolor{usc!60}\la'_{d-1,d-1} \\
                    \end{array}
                  \right),
$$ then the Lyubeznik table of $R/J(G)$ is 
$$\Lambda(R/J(G))  = \left(
                    \begin{array}{ccccc}
                     0&  \cellcolor{usc!20}\la'_{0,0} & \cellcolor{usc!20}\cdots & \cellcolor{usc!20} \la'_{0,d-2}& \cellcolor{usc!20}\la'_{0,d-1}  \\
                      &  0 & \cellcolor{usc!20}\cdots & \cellcolor{usc!20}\la'_{1,d-2}& \cellcolor{usc!20}\la'_{1,d-1}  \\
                      &  & \ddots & \cellcolor{usc!20}\vdots & \cellcolor{usc!20}\vdots\\
                     &   &  & 0 &  \cellcolor{usc!60} \la'_{d-1,d-1}-1 \\
                     &   &  &  & 1 \\
                    \end{array}
                  \right).
$$ 

\end{itemize}

\end{theorem}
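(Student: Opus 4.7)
The plan is to feed the hypothesised Lyubeznik tables of $R/L$, $R/K$ and $R/(L+K)$ into the Mayer--Vietoris long exact sequence of $\K$-vector spaces from Discussion \ref{MVsplit} and read off the entries of $\Lambda(R/J(G))$. Fixing a pair $(p,i)$ and setting $r=n-i$ in (\ref{long_k}) produces the six-term piece
\[
\K^{\la_{p-1,i-1}(R/L+K)} \xrightarrow{\partial} \K^{\la_{p,i}(R/L)}\oplus\K^{\la_{p,i}(R/K)} \to \K^{\la_{p,i}(R/J)} \to \K^{\la_{p,i-1}(R/L+K)} \xrightarrow{\partial'} \K^{\la_{p+1,i}(R/L)}\oplus\K^{\la_{p+1,i}(R/K)}.
\]
Since $\Lambda(R/K)$ is trivial, the $R/K$-summands contribute only at $(p,i)=(d,d)$, and the additional triviality hypotheses of each part kill essentially every remaining flanking term.

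The whole argument pivots on the top corner $(p,i)=(d,d)$. Since $p>i$ forces $\la_{d,d-1}(R/L+K)=0$, the piece there shortens to
\[
\K^{\la_{d-1,d-1}(R/L+K)} \xrightarrow{\partial} \K\oplus\K \to \K^{\la_{d,d}(R/J)} \to 0,
\]
and Lemma \ref{highest} independently provides $\la_{d,d}(R/J)=1$ because $G$ is connected. By exactness this forces $\partial$ to have rank exactly one, so in particular $\la_{d-1,d-1}(R/L+K)\geq 1$. In parts (i) and (ii) the triviality of $\Lambda(R/L+K)$ therefore pins down $\dim R/(L+K)=d-1$ and $\la_{d-1,d-1}(R/L+K)=1$, while in part (iii) this rank-one output is the input that will produce the $-1$ correction at the neighbouring position.

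For every $(p,i)\neq (d,d),(d-1,d)$ the leftmost and rightmost $R/(L+K)$-terms vanish, and the $R/L$- and $R/K$-summands flanking $\la_{p,i}(R/J)$ vanish as well except in part (ii); the six-term piece thus collapses to a short exact sequence yielding directly that $\Lambda(R/J(G))$ is trivial in part (i), that $\la_{p,i}(R/J(G))=\la_{p,i}(R/L)$ in part (ii), and the up-shift $\la_{p,i}(R/J(G))=\la'_{p,i-1}$ in part (iii). The only remaining position is $(p,i)=(d-1,d)$, which simultaneously sees $\partial'$; using $\la_{d-1,d}(R/K)=0$ and the rank-one fact for $\partial'$, the piece yields the short exact sequence
\[
0 \to \K^{\la_{d-1,d}(R/L)} \to \K^{\la_{d-1,d}(R/J)} \to \ker\partial' \to 0,
\]
so $\la_{d-1,d}(R/J)=\la_{d-1,d}(R/L)+\la_{d-1,d-1}(R/L+K)-1$, which specialises to $0$, $\la_{d-1,d}(R/L)$ and $\la'_{d-1,d-1}-1$ in the three parts respectively. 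The main obstacle is precisely tracking the rank of $\partial$: without Lemma \ref{highest} one would read off $\la_{d,d}(R/J)=2$, and the $-1$ correction in part (iii) is the direct shadow of this rank-one fact.
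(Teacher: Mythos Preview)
Your approach is essentially the same as the paper's: both feed the long exact sequence (\ref{long_k}) from Discussion \ref{MVsplit}, use Lemma \ref{highest} to pin down $\la_{d,d}(R/J(G))=1$, and deduce that the connecting map at the top corner has rank one, which produces the $-1$ correction in part (iii). One imprecision: your sentence ``the leftmost and rightmost $R/(L+K)$-terms vanish'' is false in part (iii), where $\la'_{p-1,i-1}$ and $\la'_{p,i-1}$ are arbitrary; what actually makes the six-term piece collapse there is the vanishing of the flanking $R/L$- and $R/K$-summands (which you do state), so the conclusion $\la_{p,i}(R/J(G))=\la'_{p,i-1}$ is still correct.
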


\begin{proof}
Assume that $\Lambda(R/K)$ is trivial and recall that, using  Lemma \ref{highest}, the highest Lyubeznik number of the cover ideal of a graph is one. Then, for $r=2$, the long exact sequence \ref{long} considered in 
Discussion \ref{MVsplit} 
{\small \begin{align*}
%\cdots \longrightarrow & [H^{p-1}_{\m}(H_{L+K}^{2}(R))]_{\bf -1}  \xrightarrow{\partial^2_{p-1}}   [H^{p}_{\m}(H_{L}^{2}(R))]_{\bf -1}  \longrightarrow  [H^{p}_{\m}(H_{J(G)}^{2}(R))]_{\bf -1} \longrightarrow [H^{p}_{\m}(H_{L+K}^{2}(R))]_{\bf -1} \longrightarrow \cdots \\
 \cdots \longrightarrow & [H^{n-3}_{\m}(H_{L+K}^{3}(R))]_{\bf -1}  \xrightarrow{\partial^2_{n-3}}   [H^{n-2}_{\m}(H_{L}^{2}(R))]_{\bf -1} \oplus [H^{n-2}_{\m}(H_{K}^{2}(R))]_{\bf -1}  \longrightarrow  [H^{n-2}_{\m}(H_{J(G)}^{2}(R))]_{\bf -1} \longrightarrow 0
\end{align*}}
turns out to be
{\small \begin{align*}
 \cdots \longrightarrow &  \K^{\lambda_{n-3,n-2}(R/L)} \longrightarrow \K^{\lambda_{n-3,n-2}(R/J(G))} \longrightarrow \K^{\lambda_{n-3,n-3}(R/L+K)}  \xrightarrow{\partial^2_{n-3}}   \K \oplus \K  \longrightarrow  \K \longrightarrow 0
\end{align*}}
Moreover, for  $r>2$ and any $p$, the long exact sequence becomes 
{\small \begin{align*}
 \cdots \longrightarrow & \K^{\lambda_{p-1,n-(r+1)}(R/L+K)}  \xrightarrow{\partial^r_{p-1}} \K^{\lambda_{p,n-r}(R/L)} \longrightarrow \K^{\lambda_{p,n-r}(R/J(G))} \longrightarrow \K^{\lambda_{p,n-(r+1)}(R/L+K)}  \xrightarrow{\partial^r_{p}}   \cdots
\end{align*}}

Now we are ready to consider all the cases:
\begin{itemize}
 \item[i)] If $ \Lambda(R/L)$ and $ \Lambda(R/L+K) $ are trivial then the Lyubeznik table of $ R/J(G) $ is trivial as well. Notice that for $r=2$ we have
 {\small \begin{align*}
 \cdots  \longrightarrow \K^{\lambda_{n-3,n-2}(R/J(G))} \longrightarrow \K \xrightarrow{\partial^2_{n-3}}   \K \oplus \K  \longrightarrow  \K \longrightarrow 0
\end{align*}}
and thus ${\lambda_{n-3,n-2}(R/J(G))}=0$ and the vanishing of the rest of Lyubeznik numbers follow immediately.
 
 \vskip 2mm 
 
\item[ii)] If $\Lambda(R/L+K)$  is trivial then we have 
{\small \begin{align*}
 0 \longrightarrow &  \K^{\lambda_{n-3,n-2}(R/L)} \longrightarrow \K^{\lambda_{n-3,n-2}(R/J(G))} \longrightarrow \K \xrightarrow{\partial^2_{n-3}}   \K \oplus \K  \longrightarrow  \K \longrightarrow 0
\end{align*}}
and thus ${\lambda_{n-3,n-2}(R/L)}= {\lambda_{n-3,n-2}(R/J(G))}$. The rest of Lyubeznik numbers also coincide so we get $\Lambda(R/J(G))=\Lambda(R/J)$.

 \vskip 2mm 
 
\item[iii)] If $\Lambda(R/L)$  is trivial then we have
{\small \begin{align*}
 0 \longrightarrow \K^{\lambda_{n-3,n-2}(R/J(G))} \longrightarrow \K^{\lambda_{n-3,n-3}(R/L+K)}  \xrightarrow{\partial ^2_{n-3}}   \K \oplus \K  \longrightarrow  \K \longrightarrow 0
\end{align*}}
and thus ${\lambda_{n-3,n-2}(R/J(G))} = \K^{\lambda_{n-3,n-3}(R/L+K)} - 1$. The rest of Lyubeznik numbers satisfy ${\lambda_{p,n-r}(R/J(G))}=\lambda_{p,n-(r+1)}(R/L+K)$ and the result follows. 
\end{itemize}
\end{proof}

\subsection{Splitting vertices} \label{Lyubeznik2}

Let $J(G) \subseteq R$ be the cover ideal of a simple connected graph $G$. 
The easiest way to provide a MV-splitting $J(G)=L\cap K$ satisfying that the Lyubeznik table of $R/K$ is trivial is by means of a {\bf splitting vertex}. Namely, we fix a vertex, say $x_n$, and we decompose the ideal $J(G)$ depending on the edges that contain this vertex.
$$J(G)= \underbrace{\left(\underset{{x_{i},x_{j} \not \in N_{G}(x_{n})}}{\bigcap}(x_{i}, x_{j}) \right)}_{L}  \cap  \underbrace{\left(\underset{x_{k} \in N_{G}(x_{n})}{\bigcap}(x_{k}, x_{n}) \right)}_{K} $$
Notice that we have:

\begin{itemize}
\item[$\cdot$] $L = J(G\setminus \{x_n\})$ is the cover ideal of the subgraph obtained removing the vertex $x_n$.
\item[$\cdot$] $K = J(K_{1,g})$ is the cover ideal of a star graph with $g=\deg(x_n)$. 
\item[$\cdot$] $L+K$ is a height $3$ monomial ideal which admits a (non-necessarily minimal) primary decomposition of the form:
\end{itemize}

$$ L+K= \underset{x_{k} \in N_{G}(x_{n})}{\bigcap}\left[ \left(\underset{x_{i}, x_{j} \not \in N_{G}(x_{n}) , N_{G}(x_{k})}{\bigcap}(x_{i}, x_{j},x_{k}, x_{n})\right) \cap  \left(\underset{x_{l} \in N_{G}(x_{k})}{\bigcap}(x_{l},x_{k}, x_{n})\right)\right].$$\\

Of course we can make it minimal removing conveniently the extra components. Notice that $\Lambda(R/K)$ is trivial.
In order to check that this decomposition indeed  provides a MV-splitting we only need to invoke \cite[Theorem 4.2]{HV} where it is proved that 
every vertex is a splitting vertex except for some limit cases where the vertex is isolated or its complement consists of isolated vertices.

%\begin{proof}
%We have to check that the connecting morphisms $d_r$ in the  Mayer-Vietoris sequence
%$$\cdots \lra H^r_{L+K}(R) \xrightarrow{d_r} H^r_{L}(R)\oplus  H^r_{K}(R) \lra  H^r_{J}(R) \lra  H^{r+1}_{L+K}(R) \xrightarrow{d_{r+1}} \cdots$$
%are zero. Since they are morphisms of $\bZ^n$-graded modules, it is enough to check the graded pieces of these local cohomology modules using the posets associated to the primary decompositions of the ideals
%$L$, $K$ and $L+K$. First, since the variable $x_n$ does not appear in the primary decomposition of $L$, we have $[H^r_L(R)]_{-\alpha}= 0$ if $\alpha_n=1$. On the other hand, 
%the non-vanishing graded pieces $[H^{r}_{K}(R)]_{-\alpha}\neq 0$, $[H^{r}_{L+K}(R)]_{-\alpha}\neq 0$ satisfy $\alpha_n=1$. Finally, due to the construction of $L$ and $K$, the posets associated to $K$ and $L+K$ 
%do not share any ideal. Therefore, the connecting morphism $d_r$ has to be zero since, when we have $[H^{r}_{L+K}(R)]_{-\alpha}\neq 0$ we also have $[H^{r}_{L}(R)]_{-\alpha} \oplus [H^{r}_{K}(R)]_{-\alpha}= 0$. 
%\end{proof}

\subsubsection{Splitting vertices of degree one}
Let $x_n$ be a splitting vertex of a graph $G$. Assume that its degree is one and, for simplicity, we will take $x_{n-1}$ as the unique vertex in its neighbourhood. We can rephrase it by saying that we are adding a {\bf whisker} to the vertex $x_{n-1}$ of the graph $G\setminus \{x_n\}$.

\begin{proposition} \label{whisker}
Let $J(G) \subseteq R$ be the cover ideal of a simple connected graph $G$. Let $x_n\in V_G$ be a vertex of degree one.
Then $\Lambda(R/J(G))=\Lambda(R/J({G\setminus \{x_n\}}))$.
 
\end{proposition}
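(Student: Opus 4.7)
The plan is to apply Theorem \ref{main}(ii) to the MV-splitting induced by the splitting vertex $x_n$. Writing $x_{n-1}$ for the unique neighbour of $x_n$, one obtains
$$J(G) \;=\; \underbrace{J(G\setminus\{x_n\})}_{L} \,\cap\, \underbrace{(x_{n-1},x_n)}_{K},$$
and this is a MV-splitting: no face ideal appears both in $\cP_L$ (whose elements avoid $x_n$) and in $\cP_K=\{(x_{n-1},x_n)\}$, so Corollary \ref{posets2} applies. Since $K$ is a complete intersection, $R/K$ is Cohen--Macaulay and hence $\Lambda(R/K)$ is trivial. Theorem \ref{main}(ii) therefore reduces the proposition to showing that $\Lambda(R/(L+K))$ is trivial.

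To establish this, I would first identify $R/(L+K)$ explicitly. Since $L$ does not involve $x_n$, there is a ring isomorphism $R/(L+K)\cong S/\bar L$, where $S=\K[V(G)\setminus\{x_{n-1},x_n\}]$ and $\bar L$ is the image of $L$ in $S$; concretely, $\bar L$ is generated by those minimal vertex covers of $G\setminus\{x_n\}$ that avoid $x_{n-1}$. Setting $N':=N_G(x_{n-1})\setminus\{x_n\}$ and $W:=V(G)\setminus(\{x_{n-1},x_n\}\cup N')$, any such cover is forced to contain all of $N'$, and its restriction to $W$ must be a minimal vertex cover of the induced subgraph $G[W]$. This yields
$$\bar L \;=\; \mathbf{x}_{N'}\cdot J(G[W])\quad\text{in } S,\qquad \mathbf{x}_{N'}:=\prod_{x_i\in N'} x_i.$$

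Since $N'$ and $W$ are disjoint variable sets, a short monomial argument (any monomial divisible by $\mathbf{x}_{N'}$ and by a generator $m\in\K[W]$ of $J(G[W])$ is divisible by their product) shows
$$\bar L \;=\; (\mathbf{x}_{N'})\,\cap\,J(G[W]),$$
an intersection of two squarefree monomial ideals in disjoint sets of variables, the principal ideal $(\mathbf{x}_{N'})$ being of height one. Proposition \ref{dual_join}(i) then gives that $S/\bar L$ has trivial Lyubeznik table, and since Lyubeznik numbers are intrinsic, so does $R/(L+K)$. Combined with the reduction of the first paragraph this completes the proof.

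The main obstacle is the identification $\bar L=\mathbf{x}_{N'}\cdot J(G[W])$, which rests on the combinatorial description of the minimal vertex covers of $G\setminus\{x_n\}$ that avoid $x_{n-1}$; once this is in hand, the Thom--Sebastiani-type collapse of Proposition \ref{dual_join}(i) is immediate thanks to the height-one factor in disjoint variables. Degenerate subcases (for example $W=\emptyset$ or $G[W]$ edgeless, so that $J(G[W])=(1)$ and $\bar L=(\mathbf{x}_{N'})$, or $N'=\emptyset$ which by connectedness forces $G=K_2$) reduce to complete intersections or base cases that can be verified directly.
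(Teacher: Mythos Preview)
Your argument is correct and follows essentially the same route as the paper: both take the MV-splitting at the degree-one vertex, identify $R/(L+K)$ with a quotient by the ideal $(\mathbf{x}_{N'})\cap J(G[W])$ in disjoint sets of variables (you reach this via vertex covers, the paper via primary decomposition, but the resulting ideal is the same $M$), apply Proposition~\ref{dual_join}(i) to the height-one factor, and finish with Theorem~\ref{main}(ii). One small point: Corollary~\ref{posets2} asks that $\cP_{L+K}$ share no face ideal with $\cP_L$ or $\cP_K$, not merely that $\cP_L\cap\cP_K=\emptyset$; the observation you already make (every element of $\cP_{L+K}$ contains $x_n$ and has height $\geq 3$) immediately supplies this, while the paper bypasses the check by citing \cite{HV} for splitting vertices.
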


\begin{proof}
Let $ x_{n-1} $ be the unique vertex in the neighbourhood of $ x_{n}. $ Then, the MV-splitting $J_{G}= L\cap K $  given by $x_n$ has $K=( x_{n-1},x_{n}) $ and
\begin{align*}
 L+K&= \left(\underset{x_{i}, x_{j} \not \in N_{G}(x_{n-1})}{\bigcap}(x_{i}, x_{j},x_{n-1}, x_{n})\right) \cap  \left(\underset{x_{l} \in N_{G}(x_{n-1})}{\bigcap}(x_{l},x_{n-1}, x_{n})\right)\\
&=\underbrace{\left[ \left(\underset{x_{i}, x_{j} \not \in N_{G}(x_{n-1})}{\bigcap}(x_{i}, x_{j})\right) \cap  \left(\underset{x_{l} \in N_{G}(x_{n-1})}{\bigcap}(x_{l})\right)\right]}_{M} +(x_{n-1}, x_{n})
\end{align*}
The ideal $M$ is a height one ideal in two sets of disjoint variables. Therefore, its Lyubeznik table is trivial because of Proposition \ref{dual_join}. Given the isomorphism  
$$\K[x_1,\dots, x_n]/L+K \cong \K[x_1,\dots, x_{n-2}]/M$$ we get that $\Lambda(R/L+K)$ is trivial as well. Then the result follows using Theorem \ref{main} and the fact that $L=J({G\setminus \{x_n\}})$.
\end{proof}

\subsubsection{Splitting vertices of degree two} 
Let $x_n$ be a splitting vertex of a graph $G$. Assume that its degree is two and the vertices in its neighbourhood are $x_{n-1}$ and $x_{n-2}$.
In this case  we also have the invariance of the Lyubeznik table after removing the splitting vertex under certain extra conditions.

\begin{proposition}\label{handle}
Let $J(G) \subseteq R$ be the cover ideal of a simple connected graph $G$. Let $x_n\in V_G$ be a vertex of degree $2$ with $N_G(x_n)=\{x_{n-2},x_{n-1}\}$. If any of the following  conditions hold:
\begin{itemize}
\item[i)] $\{x_{n-1},x_{n-2}\}\in E_G$,
\item[ii)] there exists $x_c\in N_G(x_{n-1}) \cap N_G(x_{n-2})$,
\end{itemize}
then  $\Lambda(R/J(G))=\Lambda(R/J({G\setminus \{x_n\}}))$.
 
\end{proposition}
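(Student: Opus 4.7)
The plan is to exploit the MV--splitting $J(G)=L\cap K$ afforded by the splitting vertex $x_n$ described in Subsection~\ref{Lyubeznik2}, with $L=J(G\setminus\{x_n\})$ and
\[
K=\bigcap_{x_k\in N_G(x_n)}(x_k,x_n)=(x_n,\,x_{n-1}x_{n-2}),
\]
the cover ideal of the star $K_{1,2}$ on $\{x_{n-2},x_{n-1},x_n\}$. Since $K_{1,2}$ is Cohen--Macaulay (its complement is chordal), $\Lambda(R/K)$ is trivial, so Theorem~\ref{main}(ii) will deliver the desired equality $\Lambda(R/J(G))=\Lambda(R/L)=\Lambda(R/J(G\setminus\{x_n\}))$ as soon as we prove that $\Lambda(R/(L+K))$ is itself trivial. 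The whole task therefore reduces to establishing this triviality under each of the two hypotheses.

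Since $L$ does not involve $x_n$, we have $R/(L+K)\cong R'/(L+(x_{n-1}x_{n-2}))$ with $R'=\K[x_1,\dots,x_{n-1}]$. In case (i) the edge $\{x_{n-1},x_{n-2}\}$ lies in $G\setminus\{x_n\}$, so $(x_{n-1},x_{n-2})$ appears as a minimal prime of $L$; in case (ii) the common neighbor $x_c\ne x_n$ makes $(x_c,x_{n-1})$ and $(x_c,x_{n-2})$ minimal primes of $L$, and both of them already contain $x_{n-1}x_{n-2}$. In both situations the strategy is to start from the (non--minimal) primary decomposition of $L+K$ given in Subsection~\ref{Lyubeznik2} and then use the handle relation --- the edge $\{x_{n-1},x_{n-2}\}$ in case (i), or the common neighbor $x_c$ in case (ii) --- to collapse the decomposition and rewrite
\[
L+K \;=\; M + (x_n,x_{n-1},x_{n-2})
\]
(with $x_c$ also absorbed into the second summand in case (ii)), where $M$ is a height--one squarefree monomial ideal splitting as an intersection in two disjoint sets of variables, mirroring the shape obtained in the proof of Proposition~\ref{whisker}. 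Proposition~\ref{dual_join}(i) then gives that $\Lambda(R/M)$, and hence $\Lambda(R/(L+K))$, is trivial, which is exactly what is needed to invoke Theorem~\ref{main}(ii).

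The main obstacle will be the combinatorial bookkeeping required to carry out this collapse of the primary decomposition of $L+K$: in case (i) the presence of $(x_{n-1},x_{n-2})$ among the minimal primes of $L$ forces a whole family of the would--be primary components in the formula of Subsection~\ref{Lyubeznik2} to either repeat or contain one another and hence drop out, while in case (ii) the same role is played by the pair $(x_c,x_{n-1})$ and $(x_c,x_{n-2})$. Once the Thom--Sebastiani shape $M+(x_n,x_{n-1},x_{n-2})$ has been isolated in each case, the remainder of the argument is identical to the whisker case and yields the invariance $\Lambda(R/J(G))=\Lambda(R/J(G\setminus\{x_n\}))$.
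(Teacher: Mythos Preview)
Your high-level strategy is the same as the paper's: use the splitting vertex $x_n$ to write $J(G)=L\cap K$ with $L=J(G\setminus\{x_n\})$ and $K$ the cover ideal of $K_{1,2}$, observe that $\Lambda(R/K)$ is trivial, and then reduce everything to showing that $\Lambda(R/(L+K))$ is trivial so that Theorem~\ref{main}(ii) applies. The gap is in how you propose to prove this last triviality.

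The decomposition $L+K = M + (x_n,x_{n-1},x_{n-2})$ with $M$ a height-one ideal in variables disjoint from $\{x_{n-2},x_{n-1},x_n\}$ is simply false in general. Concretely, take case~(i) with $G$ on vertices $x_1,\dots,x_5$, edges $\{x_1,x_3\},\{x_2,x_4\},\{x_3,x_4\},\{x_3,x_5\},\{x_4,x_5\}$, so $x_n=x_5$, $N_G(x_5)=\{x_3,x_4\}$ and $\{x_3,x_4\}\in E_G$. Then $L=(x_3x_4,\,x_2x_3,\,x_1x_4)$, $K=(x_5,\,x_3x_4)$, and
\[
L+K=(x_5,\,x_3x_4,\,x_2x_3,\,x_1x_4)=(x_3,x_4,x_5)\cap(x_1,x_3,x_5)\cap(x_2,x_4,x_5).
\]
The minimal prime $(x_1,x_3,x_5)$ does not contain $x_4$, so $L+K$ cannot be written as anything plus $(x_3,x_4,x_5)$. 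The same obstruction arises in case~(ii): already for the $4$-cycle one gets $L+K=(x_1,x_4,x_2x_3)$, whose minimal primes do not all contain $x_2$ and $x_3$. The point is that the primary components of $L+K$ split into two families, one containing $(x_{n-1},x_n)$ and the other containing $(x_{n-2},x_n)$, and there is no single face ideal you can pull out of all of them at once.

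What the paper does instead is exactly to exploit this dichotomy: it performs a \emph{second} MV-splitting $L+K=M\cap N$, where $M$ collects the components containing $(x_{n-1},x_n)$ and $N$ those containing $(x_{n-2},x_n)$. Each of $M$ and $N$ \emph{individually} has the whisker-type shape you were aiming for (height-one ideal plus a face ideal), so $\Lambda(R/M)$ and $\Lambda(R/N)$ are trivial by Proposition~\ref{dual_join}. The hypothesis (i) or (ii) is then used to guarantee that $M+N$ also has this shape, hence trivial Lyubeznik table, and one must further verify that $L+K=M\cap N$ is genuinely an MV-splitting (this requires a poset argument via Corollary~\ref{posets2}, with extra care when $\{x_{n-1},x_{n-2}\}\in E_G$). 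Only after all this does a second application of Theorem~\ref{main} yield the triviality of $\Lambda(R/(L+K))$. Your proposal skips this entire layer.
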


\begin{proof}
We have a MV-splitting $J(G)= L\cap K $ where $ K=(x_{n-{1}}, x_{n}) \cap ( x_{n-{2}} , x_{n}) $ and
 {\small \begin{align*}
  L&=\left(\underset{x_{i}, x_{j} \not \in N_{G}(x_{n})}{\bigcap}(x_{i}, x_{j})\right) {\cap}  \left(
 \underset{\substack{x_{a} \in N_{G}(x_{n-{1}}) \\ x_{a} \not \in N_{G}(x_{n-{2}})}}{\bigcap} 
   (x_{a},x_{n-{1}})\right) 
   {\cap} 
\left(
 \underset{\substack{x_{b} \in N_{G}(x_{n-{2}}) \\ x_{b} \not \in N_{G}(x_{n-{1}})}}{\bigcap} 
   (x_{b},x_{n-{2}})\right)  \\
 &  {\cap} 
\left(
 \underset{\stackrel{x_{c} \in N_{G}(x_{n-{1}})}{ x_{c} \in N_{G}(x_{n-{2}})}}{\bigcap} 
  (x_{c},x_{n-{2}})\cap (x_{c},x_{n-{1}})\right) \cap (x_{n-2},x_{n-1})
 \end{align*}}
Under the assumptions we are considering, at least one of the last components in this decomposition must appear. 
Therefore
 {\small \begin{align*}
  L+K&=\underbrace{\left(\underset{\substack{x_{i}, x_{j} \not \in N_{G}(x_{n}) \\x_{i}, x_{j} \not \in N_{G}(x_{n-1})}}{\bigcap}(x_{i}, x_{j},x_{n-{1}}, x_{n})\right) {\cap} 
\left(
 \underset{\substack{x_{a} \in N_{G}(x_{n-{1}}) \\ x_{a} \not \in N_{G}(x_{n-{2}})}}{\bigcap} 
   (x_{a},x_{n-{1}},x_n)\right) }_{M}  \\
& \underbrace{{\cap} 
\left(
 \underset{\substack{x_{c} \in N_{G}(x_{n-{1}}) \\ x_{c} \in N_{G}(x_{n-{2}})}}{\bigcap} 
   (x_{c},x_{n-{1}},x_n)\right) \cap (x_{n-2},x_{n-1})}_{M}\\
 &{\cap}\underbrace{\left(\underset{\substack{x_{i}, x_{j} \not \in N_{G}(x_{n}) \\ x_{i}, x_{j} \not \in N_{G}(x_{n-1})}}{\bigcap}(x_{i}, x_{j},x_{n-{2}}, x_{n})\right) {\cap} 
\left(
 \underset{\substack{x_{b} \in N_{G}(x_{n-{2}}) \\ x_{b} \not \in N_{G}(x_{n-{1}})}}{\bigcap} 
   (x_{b},x_{n-{2}},x_n)\right) }_{N}
 \end{align*}}
%In general this is not a minimal primary decomposition but we can make it minimal removing the extra components. Notice also that we may assume that the variable $ x_{n-{1}} $ does not appear in the ideal $ N. $ 
%In principle there is only one case where $(x_{n-1},x_{n-2},x_n)$ appears as a component of $N$ which is when $\{x_{n-1},x_{n-2}\}\in E_G$. In this case $(x_{n-1},x_{n-2},x_n)$ would be also a component of $M$ as 
%stated above.

 %$ M=\bigcap (x_{i_{1}}, x_{j_{1}},x_{n-{1}}, x_{n}) \cap \bigcap (x_{l_{1}},x_{n-{1}}, x_{n}) $\\
 %$ N=\bigcap (x_{i_{2}}, x_{j_{2}},x_{n-{2}}, x_{n}) \cap \bigcap (x_{l_{2}},x_{n-{2}}, x_{n}). $\\

Notice that we can rephrase the ideals $M$ and $N$ as

$M= \left[\left(\underset{\substack{x_{i}, x_{j} \not \in N_{G}(x_{n}) \\ x_{i}, x_{j} \not \in N_{G}(x_{n-1})}}{\bigcap}(x_{i}, x_{j})\right) {\cap} 
 \left(\underset{\substack{x_{a} \in N_{G}(x_{n-{1}}) \\ x_{a} \not \in N_{G}(x_{n-{2}})}}{\bigcap} 
   (x_{a})\right) \cap \left(
 \underset{\substack{x_{c} \in N_{G}(x_{n-{1}}) \\ x_{c} \in N_{G}(x_{n-{2}})}}{\bigcap} 
   (x_{c})\right) \cap (x_{n-2}) \right] +(x_{n-{1}}, x_{n})$
 %\left(\underset{x_{\ell} \in N_{G}(x_{n-{1}})}{\bigcap}(x_{\ell})\right)\right] +(x_{n-{1}}, x_{n})$

$N= \left[\left(\underset{\substack{x_{i}, x_{j} \not \in N_{G}(x_{n}) \\ x_{i}, x_{j} \not \in N_{G}(x_{n-2})}}{\bigcap}(x_{i}, x_{j})\right) {\cap}  \left(\underset{\substack{x_{b} \in N_{G}(x_{n-2}) \\ x_{b} \not \in N_{G}(x_{n-1})}}
{\bigcap}(x_{b})\right)\right] +(x_{n-{2}}, x_{n})$

\noindent so  both $\Lambda(R/M)$ and $\Lambda(R/N)$ are trivial using Proposition \ref{dual_join}. Moreover, 
in the case that condition $i)$ is satisfied, we have

 {\small  \begin{align*}
  M+N&= \Biggl[ \left(\underset{\substack{x_{i}, x_{j} \not \in N_{G}(x_{n}) \\ x_{i}, x_{j} \not \in N_{G}(x_{n-1})}}{\bigcap}(x_{i}, x_{j})\right)  {\cap} 
 \left(\underset{\substack{x_{b} \in N_{G}(x_{n-{2}}) \\ x_{b} \not \in N_{G}(x_{n-{1}})}}{\bigcap} 
   (x_{b})\right) \cap \left(
 \underset{\substack{x_{c} \in N_{G}(x_{n-{1}}) \\ x_{c} \in N_{G}(x_{n-{2}})}}{\bigcap} 
   (x_{c})\right)   \Biggr]
  + (x_{n-{2}},x_{n-{1}}, x_{n})
 \end{align*}}
 
 \noindent Under  condition $ii)$ we have
  {\small  \begin{align*}
  M+N&= \Biggl[ \left(\underset{\substack{x_{i}, x_{j} \not \in N_{G}(x_{n}) \\ x_{i}, x_{j} \not \in N_{G}(x_{n-1})}}{\bigcap}(x_{i}, x_{j})\right)  {\cap} 
 \left(\underset{\substack{x_{a} \in N_{G}(x_{n-{1}}) \\ x_{a} \not \in N_{G}(x_{n-{2}}) \\ x_{b} \in N_{G}(x_{n-{2}}) \\ x_{b} \not \in N_{G}(x_{n-{1}})}}{\bigcap} 
   (x_a,x_{b})\right) \cap \left(
 \underset{\substack{x_{c} \in N_{G}(x_{n-{1}}) \\  x_{c} \in N_{G}(x_{n-{2}})}}{\bigcap} 
   (x_{c})\right)   \Biggr]
  + (x_{n-{2}},x_{n-{1}}, x_{n})
 \end{align*}}
 
In any case the Lyubeznik table of $R/M+N$ is trivial since we are dealing with a height one ideal in a disjoint set of variables so we 
 can apply Proposition \ref{dual_join} once again. As a consequence of Theorem \ref{main}, the Lyubeznik table $\Lambda(R/L+K)$ is trivial and the result follows applying Theorem \ref{main} once more. To finish the proof we need to check that 
the decomposition $ L+K=M\cap N $ is, indeed, a MV-splitting.

 \vskip 2mm
 
If $\{x_{n-2},x_{n-1}\}$ is not an edge of $G$ we have that the variable $ x_{n-{1}} $ appears in all the components of the primary decomposition of $M$ but not in $N$. We also have that $ x_{n-{2}} $ appears in all the components of $N$ but not in $M$ and both
$ x_{n-{2}},  x_{n-{1}} $ appear in $M+N$. In particular the posets associated to $M$, $N$ and $M+N$ do not have common ideals.
Then the result follows from Corollary \ref{posets2}.

 When $\{x_{n-2},x_{n-1}\}$ is an edge of $G$ we have to be more careful. The variable $ x_{n-{1}} $ appears in all the components of  $M+N$ but not in $N$ and thus its corresponding posets do not have common ideals so we only have to compare $M+N$ with $M$. Indeed, since the variables $x_a$'s and $x_b$'s do not belong to both ideals and $x_{n-1}, x_n$ do so we may just assume that the ideals are
 
  {\small  \begin{align*}
  M&= \underbrace{\left(\underset{\substack{x_{i}, x_{j} \not \in N_{G}(x_{n}) \\ x_{i}, x_{j} \not \in N_{G}(x_{n-1})}}{\bigcap}(x_{i}, x_{j})\right) {\cap} 
 \left(
 \underset{\substack{x_{c} \in N_{G}(x_{n-{1}}) \\ x_{c} \in N_{G}(x_{n-{2}})}}{\bigcap} 
   (x_{c})\right) }_{Q}\cap (x_{n-2}) 
   \end{align*}}

 {\small  \begin{align*}
  M+N&= \Biggl[ \left(\underset{\substack{x_{i}, x_{j} \not \in N_{G}(x_{n}) \\ x_{i}, x_{j} \not \in N_{G}(x_{n-1})}}{\bigcap}(x_{i}, x_{j})\right)  {\cap} 
 \left(
 \underset{\substack{x_{c} \in N_{G}(x_{n-{1}}) \\ x_{c} \in N_{G}(x_{n-{2}})}}{\bigcap} 
   (x_{c})\right)   \Biggr]
  + (x_{n-{2}})
 \end{align*}}
 
 \noindent and thus we have a MV-splitting $$0\lra H^r_{Q}(R)\oplus H^r_{(x_{n-2})}(R) \lra  H^r_{M}(R) \lra  H^{r+1}_{M+N}(R) \lra 0$$
 so the graded pieces of $H^{r+1}_{M+N}(R)$ are related to the graded pieces of  $H^r_{M}(R)$ instead of those of $H^{r+1}_{M}(R)$.
 \end{proof}

We can still say something about the Lyubeznik numbers of $J(G)$ in the event that the hypothesis of the previous proposition do not hold.
In this case,  the Lyubeznik table of $R/L+K$ is not trivial so  we need to control the connecting morphisms 
$$\K^{\lambda_{p-1,n-(r+1)}(R/L+K)}  \xrightarrow{\partial^r_{p-1}} \K^{\lambda_{p,n-r}(R/L)}$$ considered in 
Discussion \ref{MVsplit}. If the Lyubeznik table of $L=J(G\setminus \{x_n\})$ is trivial or, more generally,  the connecting morphisms are zero, 
we can give a formula for the Lyubeznik numbers of $J(G)$ in terms of those of $J(G\setminus \{x_n\})$ and the Lyubeznik numbers associated to the graph
$$H:= \left( G\setminus \{x_n\} \right) \cup \bigcup_{\substack{ x_a \in N_{G}(x_{n-{1}})\\ x_b \in  N_{G}(x_{n-{2}})}}\{x_a,x_b\}$$  
whose cover ideal is 
$$J(H)= \left(\underset{\substack{x_{i}, x_{j} \not \in N_{G}(x_{n-2}) \\ x_{i}, x_{j} \not \in N_{G}(x_{n-1})}}{\bigcap}(x_{i}, x_{j})\right)  {\cap} 
 \left(\underset{\substack{x_{a} \in N_{G}(x_{n-{2}})  \\ x_{b} \in N_{G}(x_{n-{1}})  }}{\cap} 
   (x_a,x_{b})\right) $$ 
In other words, the graph $H$ is obtained by adjoining to $G\setminus \{x_n\}$ a complete bipartite graph in the set of vertices  $N_{G}(x_{n-{2}})$ and $N_{G}(x_{n-{1}})$.

\begin{proposition}\label{deg_two}
Let $J(G) \subseteq R$ be the cover ideal of a simple connected graph $G$. Let $x_n\in V_G$ be a vertex of degree $2$ with $N_G(x_n)=\{x_{n-1},x_{n-2}\}$. Assume that conditions $i)$ and $ii)$ of Proposition \ref{handle} no not hold and that the connecting morphisms 
$\partial^r_{p}$ are zero for all $r>2$ and for all $p$. Then we have

\hskip 15mm  $\lambda_{d,d}(R/J(G))=1, \hskip 3mm \lambda_{d-1,d}(R/J(G))= \lambda_{d-1,d}(R/J(G\setminus \{x_n\})) +1,$

\hskip 15mm  $\lambda_{p,r}(R/J(G))= \lambda_{p,r}(R/J(G\setminus \{x_n\})) + \lambda_{p,r-2}(R/J(H))$ \hskip 3mm 

\noindent for $r=2,\dots, d-1$ and $p=0,\dots, r-2$ and the rest of Lyubeznik numbers satisfy 

\hskip 15mm  $\lambda_{p,r}(R/J(G))= \lambda_{p,r}(R/J(G\setminus \{x_n\})) $ .

\vskip 2mm

\noindent That is,

$$\Lambda(R/J(G))  = \left(
                    \begin{array}{ccccccc}
                     \cellcolor{usc!20}\la_{0,0}& \cellcolor{usc!20}\la_{0,1}& \cellcolor{usc!60} \la_{0,2}+\la'_{0,0} &\cellcolor{usc!60} \la_{0,3}+\la'_{0,1} & \cellcolor{usc!60}\cdots & \cellcolor{usc!60}\la_{0,d-1}+\la'_{0,d-3}&\cellcolor{usc!20}\la_{0,d}  \\
                     & \cellcolor{usc!20}\la_{1,1}&  \cellcolor{usc!20}\la_{1,2} &  \cellcolor{usc!60}\la_{1,3}+\la'_{1,1}& \cellcolor{usc!60}\cdots & \cellcolor{usc!60}\la_{1,d-1}+\la'_{1,d-3}& \cellcolor{usc!20}\la_{1,d} \\
                     & &  \cellcolor{usc!20}\la_{2,2} &  \cellcolor{usc!20}\la_{2,3}& \cellcolor{usc!60}\cdots & \cellcolor{usc!60}\la_{2,d-1}+\la'_{2,d-3}& \cellcolor{usc!20}\la_{2,d} \\
                     & &  && \cellcolor{usc!60}\ddots & \cellcolor{usc!60}\vdots & \cellcolor{usc!20}\vdots\\
                      & & &  &  & \cellcolor{usc!60}\la_{d-3,d-1}+\la'_{d-3,d-3} & \cellcolor{usc!20}\la_{d-3,d} \\                
                     & & &  &  & \cellcolor{usc!20}\la_{d-2,d-1} & \cellcolor{usc!20}\la_{d-2,d} \\
                     & & &  &  & \cellcolor{usc!20}\la_{d-1,d-1} & \cellcolor{usc!40}\la_{d-1,d}+1 \\
                     & & & &  &  & \cellcolor{usc!40} 1 \\
                    \end{array}
                  \right)
$$ where $$\Lambda(R/J(G\setminus\{x_n\}))  = \left(
                    \begin{array}{ccc}
                      \cellcolor{usc!20}\la_{0,0} & \cellcolor{usc!20}\cdots & \cellcolor{usc!20}\la_{0,d}  \\
                       & \cellcolor{usc!20}\ddots & \cellcolor{usc!20}\vdots \\
                       &  & \cellcolor{usc!20}\la_{d,d} \\
                    \end{array}
                  \right), \hskip 2mm \Lambda(\K[x_1,\dots, x_{n-3}]/J(H))  = \left(
                    \begin{array}{ccc}
                      \cellcolor{usc!60}\la'_{0,0} & \cellcolor{usc!60}\cdots & \cellcolor{usc!60}\la'_{0,d-3}  \\
                       & \cellcolor{usc!60}\ddots & \cellcolor{usc!60}\vdots \\
                       &  & \cellcolor{usc!60}\la'_{d-3,d-3} \\
                    \end{array}
                  \right)
$$

\end{proposition}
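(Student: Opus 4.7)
The plan is to carry out the Mayer--Vietoris splitting strategy of Section \ref{split1}, mimicking the proof of Proposition \ref{handle} but without the simplifications that conditions $i)$ and $ii)$ provide. First I would take the MV-splitting $J(G) = L \cap K$ induced by the splitting vertex $x_n$, with $L = J(G\setminus\{x_n\})$ and $K = (x_{n-1},x_n) \cap (x_{n-2},x_n)$; the latter is a complete intersection with trivial Lyubeznik table. The crux of the argument is to determine $\Lambda(R/(L+K))$, since Theorem \ref{main} cannot be invoked directly here, and then to track the long exact sequence of Discussion \ref{MVsplit} carefully.

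To compute $\Lambda(R/(L+K))$ I would invoke the auxiliary MV-splitting $L+K = M \cap N$ that already appeared in the proof of Proposition \ref{handle}. Under the present assumptions the components involving a common neighbour $x_c$ or the edge $\{x_{n-1},x_{n-2}\}$ are absent, so every face ideal in the poset of $M$ contains $x_{n-1}$ but not $x_{n-2}$ and vice-versa for $N$; Corollary \ref{posets2} then certifies the MV-splitting. Both $\Lambda(R/M)$ and $\Lambda(R/N)$ remain trivial by Proposition \ref{dual_join}, and reducing modulo $(x_{n-2},x_{n-1},x_n)$ identifies $R/(M+N) \cong \K[x_1,\dots,x_{n-3}]/J(H)$. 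Substituting these data into the LES of Discussion \ref{MVsplit} applied to $L+K = M\cap N$, and using that $\dim R/(M+N) = n-5$ kills the connecting terms that would feed into the top row, I obtain $\la_{n-3,n-3}(R/(L+K)) = 2$ (as the cokernel of a zero map), $\la_{p,n-3}(R/(L+K)) = 0$ for $p<n-3$, and $\la_{p,i}(R/(L+K)) = \la'_{p,i-1}$ for $i \le n-4$, where the primed numbers denote those of $\K[x_1,\dots,x_{n-3}]/J(H)$.

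Next I would feed this into the LES of Discussion \ref{MVsplit} for the main splitting $J(G) = L\cap K$. The hypothesis that $\partial^r_p = 0$ for every $r>2$, combined with the triviality of $\Lambda(R/K)$ outside bidegree $(d,d)$, cuts the long exact sequence into short exact sequences
\[
0 \lra \K^{\la_{p,i}(R/L)} \lra \K^{\la_{p,i}(R/J(G))} \lra \K^{\la_{p,i-1}(R/(L+K))} \lra 0
\]
for every $2 \le i \le d-1$, which yields the stated formula $\la_{p,i}(R/J(G)) = \la_{p,i}(R/J(G\setminus\{x_n\})) + \la'_{p,i-2}$.

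For the last column $i=d$ I would use Lemma \ref{highest}, which forces $\la_{d,d}(R/J(G)) = 1$. Reading the LES at $(p,r) = (d,2)$ this gives
\[
\K^2 = \K^{\la_{n-3,n-3}(R/(L+K))} \xrightarrow{\partial^2_{n-3}} \K^{\la_{d,d}(R/L)} \oplus \K^{\la_{d,d}(R/K)} = \K^2 \lra \K \lra 0,
\]
so $\partial^2_{n-3}$ must have rank exactly one. Substituting $\ker \partial^2_{n-3} \cong \K$ into the LES at $(p,r) = (n-3,2)$ produces $\la_{n-3,d}(R/J(G)) = \la_{n-3,d}(R/L)+1$, while for $p<n-3$ the remaining entries of the last column simply reproduce those of $\Lambda(R/L)$. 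The principal obstacle is the detailed computation of $\Lambda(R/(L+K))$: because its highest Lyubeznik number is $2$ rather than $1$, Theorem \ref{main} is unavailable and the top row of the table must be extracted by hand from the long exact sequence before the vanishing of the $\partial^r_p$'s for $r>2$ and the rank analysis of $\partial^2_{n-3}$ can be combined to yield the advertised shape of $\Lambda(R/J(G))$.
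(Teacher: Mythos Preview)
Your proposal is correct and follows essentially the same route as the paper: the MV-splitting $J(G)=L\cap K$ at the vertex $x_n$, the auxiliary MV-splitting $L+K=M\cap N$ verified via Corollary \ref{posets2}, the identification $R/(M+N)\cong\K[x_1,\dots,x_{n-3}]/J(H)$, the computation of $\Lambda(R/(L+K))$ from the triviality of $\Lambda(R/M)$, $\Lambda(R/N)$ together with $\hlt(M+N)=5$, and finally the rank-one analysis of $\partial^2_{n-3}$ in the top row combined with the hypothesis $\partial^r_p=0$ for $r>2$. The only cosmetic slip is the label ``$(p,r)=(d,2)$'' where you actually mean $p=d-1=n-3$; your subsequent use of $\partial^2_{n-3}$ is the correct index.
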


\begin{proof}
Following the same approach as in the proof of Proposition \ref{handle} we have 
 a MV-splitting $J(G)= L\cap K $ with $ K=(x_{n-{1}}, x_{n}) \cap ( x_{n-{2}} , x_{n}) $ but in this case we also have
 {\small \begin{align*}
  L&=\left(\underset{x_{i}, x_{j} \not \in N_{G}(x_{n})}{\bigcap}(x_{i}, x_{j})\right) {\cap}  \left(
 \underset{\substack{x_{a} \in N_{G}(x_{n-{1}}) \\ x_{a} \not \in N_{G}(x_{n-{2}})}}{\bigcap} 
   (x_{a},x_{n-{1}})\right) 
   {\cap} 
\left(
 \underset{\substack{x_{b} \in N_{G}(x_{n-{2}}) \\ x_{b} \not \in N_{G}(x_{n-{1}})}}{\bigcap} 
   (x_{b},x_{n-{2}})\right)  
 \end{align*}}
 
 {\small \begin{align*}
  L+K&=\underbrace{\left(\underset{\substack{x_{i}, x_{j} \not \in N_{G}(x_{n}) \\x_{i}, x_{j} \not \in N_{G}(x_{n-1})}}{\bigcap}(x_{i}, x_{j},x_{n-{1}}, x_{n})\right) {\cap} 
\left(
 \underset{\substack{x_{a} \in N_{G}(x_{n-{1}}) \\ x_{a} \not \in N_{G}(x_{n-{2}})}}{\bigcap} 
   (x_{a},x_{n-{1}},x_n)\right) }_{M}  \\
 &{\cap}\underbrace{\left(\underset{\substack{x_{i}, x_{j} \not \in N_{G}(x_{n}) \\ x_{i}, x_{j} \not \in N_{G}(x_{n-1})}}{\bigcap}(x_{i}, x_{j},x_{n-{2}}, x_{n})\right) {\cap} 
\left(
 \underset{\substack{x_{b} \in N_{G}(x_{n-{2}}) \\ x_{b} \not \in N_{G}(x_{n-{1}})}}{\bigcap} 
   (x_{b},x_{n-{2}},x_n)\right) }_{N}
 \end{align*}}
 
 Once again we rewrite the ideals $M$ and $N$ as

$M= \left[\left(\underset{\substack{x_{i}, x_{j} \not \in N_{G}(x_{n}) \\ x_{i}, x_{j} \not \in N_{G}(x_{n-1})}}{\bigcap}(x_{i}, x_{j})\right) {\cap} 
 \left(\underset{\substack{x_{a} \in N_{G}(x_{n-{1}}) \\ x_{a} \not \in N_{G}(x_{n-{2}})}}{\bigcap} 
   (x_{a})\right) \right] +(x_{n-{1}}, x_{n})$
 
$N= \left[\left(\underset{\substack{x_{i}, x_{j} \not \in N_{G}(x_{n}) \\ x_{i}, x_{j} \not \in N_{G}(x_{n-2})}}{\bigcap}(x_{i}, x_{j})\right) {\cap}  \left(\underset{\substack{x_{b} \in N_{G}(x_{n-2}) \\ x_{b} \not \in N_{G}(x_{n-1})}}
{\bigcap}(x_{b})\right)\right] +(x_{n-{2}}, x_{n})$

\noindent so  both $\Lambda(R/M)$ and $\Lambda(R/N)$ are trivial. Moreover
  {\small  \begin{align*}
  M+N&= \Biggl[ \underbrace{\left(\underset{\substack{x_{i}, x_{j} \not \in N_{G}(x_{n-2}) \\ x_{i}, x_{j} \not \in N_{G}(x_{n-1})}}{\bigcap}(x_{i}, x_{j})\right)  {\cap} 
 \left(\underset{\substack{x_{a} \in N_{G}(x_{n-{1}})  \\ x_{b} \in N_{G}(x_{n-{2}})  }}{\cap} 
   (x_a,x_{b})\right) }_{J(H)}  \Biggr]
  + (x_{n-{2}},x_{n-{1}}, x_{n}).
 \end{align*}}

We have that the variable $ x_{n-{1}} $ appears in all the components of the primary decomposition of $M$ but not in $N$. We also have that $ x_{n-{2}} $ appears in all the components of $N$ but not in $M$ and both
$ x_{n-{2}},  x_{n-{1}} $ appear in $M+N$. In particular the posets associated to $M$, $N$ and $M+N$ do not have common ideals.
Therefore $ L+K=M\cap N $ is a MV-splitting by using Corollary \ref{posets2}.

 \vskip 2mm

\noindent Applying the long exact sequence of local cohomology modules to the short exact sequence  
$$ 0\lra H^r_{M}(R)\oplus H^r_{N}(R) \lra  H^r_{L+K}(R) \lra  H^{r+1}_{M+N}(R) \lra 0 $$
we get, for $r=3$,
{\small \begin{align*}
 \cdots \longrightarrow & H^{n-4}_{\m}(H_{M+N}^{4}(R)) \xrightarrow{\partial^3_{n-4}}   H^{n-3}_{\m}(H_{M}^{3}(R)) \oplus H^{n-3}_{\m}(H_{N}^{3}(R)) \longrightarrow  H^{n-3}_{\m}(H_{L+K}^{3}(R)) \longrightarrow 0
\end{align*}}
Since  $\Lambda(R/M)$ and $\Lambda(R/N)$ are trivial and  taking into account that $\hlt M = \hlt N=3 $ and $\hlt M+N=5$ we get

 $[ H^{n-3}_{\m}(H_{L+K}^{3}(R))]_{_{\bf -1}} \cong \K^{2}$

$ [ H^{p}_{\m}(H_{L+K}^{3}(R))]_{_{\bf -1}} \cong 0  \hskip 5mm \forall p < n-3 $

  $ [ H^{p}_{\m}(H_{L+K}^{r}(R))]_{_{\bf -1}} \cong [ H^{p}_{\m}(H_{M+N}^{r+1}(R))]_{_{\bf -1}} \hskip 5mm \forall p, \forall r >3.$

\vskip 2mm
Now, if we go back to the long exact sequence \ref{long} considered in 
Discussion \ref{MVsplit} to compute the Lyubeznik table of $R/J(G)$ we get
{\small $$
 0 \longrightarrow   \K^{\lambda_{n-3,n-2}(R/L)} \longrightarrow \K^{\lambda_{n-3,n-2}(R/J(G))} \longrightarrow \K^{\lambda_{n-3,n-3}(R/L+K)}  \xrightarrow{\partial^3_{n-3}}   \K \oplus \K  \longrightarrow  \K \longrightarrow 0
$$}
\noindent and $\lambda_{p,n-2}(R/L)= \lambda_{p,n-2}(R/J(G))$ for all $p < n-3 $. Moreover, for  $r>2$ and any $p$, the long exact sequence 
becomes 
{\small \begin{align*}
 \cdots \longrightarrow & \K^{\lambda_{p-1,n-(r+1)}(R/L+K)}  \xrightarrow{\partial^r_{p-1}} \K^{\lambda^r_{p,n-r}(R/L)} \longrightarrow \K^{\lambda_{p,n-r}(R/J(G))} \longrightarrow \K^{\lambda_{p,n-(r+1)}(R/L+K)}  \xrightarrow{\partial^r_{p}}   \cdots
\end{align*}}
Therefore we have:

 $ \lambda_{d,d}(R/J(G))=1,$
 
 $\lambda_{d-1,d}(R/J(G))= \lambda_{d-1,d}(R/L)+1,$
 
 $\lambda_{p,d}(R/J(G))= \lambda_{p,d}(R/L)  \hskip 5mm \forall p < d-1$

\noindent and the rest of Lyubeznik numbers depend on the connecting morphisms $$\K^{\lambda_{p-1,n-(r+2)}(R/M+N)}=\K^{\lambda_{p-1,n-(r+1)}(R/L+K)}  \xrightarrow{\partial^r_{p-1}} \K^{\lambda_{p,n-r}(R/L)}.$$

\noindent Assuming that the connecting morphisms are zero we get 
$${\lambda_{p,n-r}(R/J(G))} = {\lambda_{p,n-r}(R/L)} + {\lambda_{p,n-(r+1)}(R/L+K)}  $$
and the result follows since we have $L=J(G\setminus \{x_n\})$ and 
an isomorphism
$$\K[x_1,\dots, x_n]/M+N \cong \K[x_1,\dots, x_{n-3}]/J(H)$$
and thus $\Lambda(R/M+N) = \Lambda(\K[x_1,\dots, x_{n-3}]/J(H))$.
\end{proof}

We highlight the following particular case.

\begin{corollary}\label{deg_two_2}
Under the assumptions of Proposition \ref{deg_two}, if $\Lambda(R/J(G\setminus \{x_n\}))$ is trivial then the Lyubeznik table of $R/J(G)$ is 
$$\Lambda(R/J(G))  = \left(
                    \begin{array}{ccccccc}
                     0& 0& \cellcolor{usc!60}\la'_{0,0} &\cellcolor{usc!60} \la'_{0,1} & \cellcolor{usc!60}\cdots & \cellcolor{usc!60}\la'_{0,d-3}& 0  \\
                     & 0&  0 &  \cellcolor{usc!60}\la'_{1,1}& \cellcolor{usc!60}\cdots & \cellcolor{usc!60}\la'_{1,d-3}& 0 \\
                     & &  0 &  0& \cellcolor{usc!60}\cdots & \cellcolor{usc!60}\la'_{2,d-3}& 0 \\
                     & &  && \cellcolor{usc!60}\ddots & \cellcolor{usc!60}\vdots & \vdots\\
                      & & &  &  & \cellcolor{usc!60}\la'_{d-3,d-3} & 0 \\                
                     & & &  &  & 0 & 0 \\
                     & & &  &  & 0 & \cellcolor{usc!40}1 \\
                     & & & &  &  & \cellcolor{usc!40} 1 \\
                    \end{array}
                  \right)
$$ where $$\Lambda(\K[x_1,\dots, x_{n-3}]/J(H))  = \left(
                    \begin{array}{ccc}
                      \cellcolor{usc!60}\la'_{0,0} & \cellcolor{usc!60}\cdots & \cellcolor{usc!60}\la'_{0,d-3}  \\
                       & \cellcolor{usc!60}\ddots & \cellcolor{usc!60}\vdots \\
                       &  & \cellcolor{usc!60}\la'_{d-3,d-3} \\
                    \end{array}
                  \right)
$$

\end{corollary}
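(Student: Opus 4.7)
The plan is to treat this corollary as a direct specialization of Proposition \ref{deg_two} under the triviality hypothesis on $\Lambda(R/J(G\setminus\{x_n\}))$. Triviality means $\lambda_{d,d}(R/J(G\setminus\{x_n\})) = 1$ with all other Lyubeznik numbers equal to zero, where $d = n - 2 = \dim R/J(G) = \dim R/J(G\setminus\{x_n\})$. I intend to substitute this into each of the three formulas in Proposition \ref{deg_two} and read off the resulting table entry by entry.

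For the last column, the identity $\lambda_{d,d}(R/J(G)) = 1$ is immediate, and $\lambda_{d-1,d}(R/J(G)) = \lambda_{d-1,d}(R/J(G\setminus\{x_n\})) + 1 = 0 + 1 = 1$ gives the second highlighted $1$ in the bottom-right corner. The remaining entries in this column, by Proposition \ref{deg_two}, equal $\lambda_{p,d}(R/J(G\setminus\{x_n\})) = 0$.

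For pairs $(p,r)$ with $r \in \{2, \ldots, d-1\}$ and $p \in \{0, \ldots, r-2\}$, the formula $\lambda_{p,r}(R/J(G)) = \lambda_{p,r}(R/J(G\setminus\{x_n\})) + \lambda_{p, r-2}(R/J(H))$ collapses to $\lambda'_{p, r-2}$, reproducing the shifted block coming from $\Lambda(\K[x_1,\dots,x_{n-3}]/J(H))$ with its characteristic $(+2)$ column offset. All remaining positions $(p,r)$ satisfy $\lambda_{p,r}(R/J(G)) = \lambda_{p,r}(R/J(G\setminus\{x_n\})) = 0$, accounting for the zeros displayed.

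There is no real obstacle here; the work is purely bookkeeping. The only point deserving a moment's attention is verifying that the index ranges line up with the sizes of the two tables: since $\dim \K[x_1,\dots,x_{n-3}]/J(H) = d-3$, its Lyubeznik table is indexed by $0 \leq p \leq r \leq d-3$, which embeds into the region $r - 2 \in \{0, \ldots, d-3\}$, $p \in \{0, \ldots, r-2\}$ exactly as displayed in the statement.
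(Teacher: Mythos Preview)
Your proof is correct and matches the paper's approach exactly: the paper presents this corollary as an immediate specialization of Proposition~\ref{deg_two} with no separate argument, and your entry-by-entry substitution of the trivial table $\Lambda(R/J(G\setminus\{x_n\}))$ into the three formulas of that proposition is precisely what is intended. The only thing one might add is that the vanishing of the connecting morphisms $\partial^r_p$ is already part of the standing assumptions of Proposition~\ref{deg_two}, so it need not be checked again here---but you have correctly absorbed this into ``under the assumptions of Proposition~\ref{deg_two}.''
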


\subsubsection{Splitting vertices of maximal degree}
We turn our attention to the case of a splitting vertex $x_n$ of degree $n-1$. That is, $\{x_i,x_n\}$ is an edge of $G$ for $i=1,\dots, n-1$.

\begin{proposition} \label{dominating}
Let $J(G) \subseteq R$ be the cover ideal of a simple connected graph $G$. Let $x_n\in V_G$ be a vertex of degree $n-1$. Then, the 
Lyubeznik table of $R/J(G)$ is trivial.
\end{proposition}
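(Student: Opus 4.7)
The plan is to apply the MV-splitting at the dominating vertex $x_n$ and then compare the resulting Mayer--Vietoris short exact sequence with an auxiliary short exact sequence arising from the localization distinguished triangle.

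Write $J(G) = L \cap K$ with $L = J(G \setminus \{x_n\})$ and $K = J(K_{1,n-1})$; this is an MV-splitting since $x_n$ is a splitting vertex. A direct computation yields $K = \bigcap_{i=1}^{n-1}(x_i,x_n) = (x_n, x_1 x_2 \cdots x_{n-1})$, so $R/K \cong \K[x_1,\ldots,x_{n-1}]/(x_1 \cdots x_{n-1})$ is Cohen--Macaulay and $\Lambda(R/K)$ is trivial. Since $x_1 x_2 \cdots x_{n-1}$ belongs to every face ideal $(x_i,x_j)$ with $i,j < n$, it lies in $L$; hence $L + K = L + (x_n)$. Neither $\Lambda(R/L)$ nor $\Lambda(R/(L+K))$ is trivial in general, so Theorem~\ref{main}(i) does not apply directly; instead, the plan is to show that the connecting homomorphisms of the MV-splitting long exact sequence are injective after passing to $[\cdot]_{\bf -1}$.

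The key input is the distinguished triangle $R\Gamma_{(x_n)}(R) \lra R \lra R_{x_n}$. Applying $R\Gamma_L$ and using $R\Gamma_L \circ R\Gamma_{(x_n)} = R\Gamma_{L+(x_n)} = R\Gamma_{L+K}$, together with the fact that $x_n$ is a nonzerodivisor on $H^r_L(R) \cong H^r_{J(H)}(R')[x_n]$ (where $R' = \K[x_1,\ldots,x_{n-1}]$ and $H = G \setminus \{x_n\}$), produces a short exact sequence
\begin{equation*}
0 \lra H^r_L(R) \lra H^r_L(R_{x_n}) \lra H^{r+1}_{L+K}(R) \lra 0
\end{equation*}
for every $r$. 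I will compare it with the MV-splitting short exact sequence
\begin{equation*}
0 \lra H^r_L(R) \oplus H^r_K(R) \lra H^r_{J(G)}(R) \lra H^{r+1}_{L+K}(R) \lra 0
\end{equation*}
via a morphism whose components are the projection $\pi_1$ onto the first summand on the left terms, the localization map $H^r_{J(G)}(R) \to H^r_{J(G)}(R)_{x_n} = H^r_L(R_{x_n})$ in the middle (using that $K R_{x_n} = R_{x_n}$ implies $J R_{x_n} = L R_{x_n}$), and the identity on the right. The left square commutes because $H^r_K(R) \to H^r_{J(G)}(R) \to H^r_L(R_{x_n})$ factors through $H^r_K(R_{x_n}) = 0$.

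Applying $H^p_\m(-)$ and restricting to the $[\cdot]_{\bf -1}$-graded piece, the middle term $[H^p_\m(H^r_L(R_{x_n}))]_{\bf -1}$ vanishes because $x_n$ acts invertibly on $H^r_L(R_{x_n})$ while $x_n \in \m$. Hence the connecting homomorphism $\delta^{(2)}$ of the auxiliary long exact sequence is an isomorphism; the commutative diagram then forces $\pi_1^* \circ \delta^{(1)} = \delta^{(2)}$, where $\delta^{(1)}$ is the MV-splitting connecting homomorphism. Consequently $\delta^{(1)}$ is injective and its image projects isomorphically onto the first summand, so the long exact sequence from the MV-splitting yields
\begin{equation*}
[H^p_\m(H^r_{J(G)}(R))]_{\bf -1} \cong [H^p_\m(H^r_K(R))]_{\bf -1}
\end{equation*}
for all $p, r$. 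Thus $\lambda_{p,i}(R/J(G)) = \lambda_{p,i}(R/K)$ and $\Lambda(R/J(G)) = \Lambda(R/K)$ is trivial. The main technical point is the commutativity of the right square of the morphism of short exact sequences, which amounts to the compatibility of the Mayer--Vietoris connecting map for $J = L \cap K$ with the localization connecting map for $L + (x_n)$; this is a naturality assertion in the derived category exploiting that both triangles share the common vertex $R\Gamma_{L+K}(R)$.
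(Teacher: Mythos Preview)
Your argument is correct and takes a genuinely different route from the paper's. The paper exploits the interpretation of the connecting morphisms $\partial^r_p$ in terms of linear strands of Alexander duals (from \cite{AV14}): since $L+K = L+(x_n)$, one has $(L+K)^\vee = x_n \cdot L^\vee$, so the $(r+1)$-linear strand of $(L+K)^\vee$ coincides with the $r$-linear strand of $L^\vee$ up to a shift, and the connecting map becomes the identity on homology. Your approach instead compares the MV short exact sequence with the localization triangle $R\Gamma_{L+K}(R)\to R\Gamma_L(R)\to R\Gamma_L(R_{x_n})$, using the elementary observation that $[H^p_\m(M)]_{\bf -1}=0$ whenever $x_n$ acts invertibly on $M$. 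This is more self-contained (it avoids the linear-strand machinery) and gives the same conclusion: the first component of the MV connecting map is an isomorphism, whence $\lambda_{p,i}(R/J(G))=\lambda_{p,i}(R/K)$.

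The one soft spot is the commutativity of the right square, which you flag but do not fully justify. ``Naturality in the derived category'' is the right slogan, but one should say a bit more. A clean way to close this: apply the octahedral axiom to the composition
\[
R\Gamma_{L+K}(R)\;\xrightarrow{\text{diag}}\;R\Gamma_L(R)\oplus R\Gamma_K(R)\;\xrightarrow{\pi_1}\;R\Gamma_L(R).
\]
The cone of the first map is $R\Gamma_{J}(R)$ (MV triangle), the cone of the composite is $R\Gamma_L(R_{x_n})$ (localization triangle, since $L+K=L+(x_n)$), and the cone of $\pi_1$ is $R\Gamma_K(R)[1]$. The octahedral diagram then furnishes a map $R\Gamma_J(R)\to R\Gamma_L(R_{x_n})$ making both the left and the right squares commute; since your vanishing step only uses that the middle term is $H^r_L(R_{x_n})$ (not the particular identification of the map), the rest of the argument goes through unchanged. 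Equivalently, one may note that $J+(x_n)=K$, so the localization triangle for $R\Gamma_J$ is $R\Gamma_K(R)\to R\Gamma_J(R)\to R\Gamma_L(R_{x_n})$, and compare with the above.
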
 

\begin{proof}
We have a MV-splitting $J_{G}= L\cap K $ where $ K=(x_{1}, x_{n}) \cap \cdots \cap ( x_{n-{2}} , x_{n}) $ and, given the fact that all the vertices are in the neighbourhood of $x_n$, we have
 { $$
  L+K = \bigcap_{\substack{i,j\neq n \\ \{x_i,x_j\}\in E_G}}(x_i,x_j,x_n)  = L+(x_n)$$}
Recall that  $\Lambda(R/K)$ is trivial so the long exact sequence associated to the MV-splitting reduces to
 $$ \cdots  \longrightarrow H^{p}_{\m}(H_{J(G)}^{r}(R)) \longrightarrow H^{p}_{\m}(H_{L+(x_n)}^{r+1}(R))  \xrightarrow{\partial^r_p}  
  H^{p+1}_{\m}(H_{L}^{r}(R))   \longrightarrow \cdots$$ 
Using the interpretation of the connecting morphisms $\partial^r_p$'s in terms of the corresponding linear strands given at the end of Section \ref{local_cohomology}  we observe that we are 
% 
%$$H_{p}(\mathbb{F}_{\bullet}^{<r+1>}((L+(x_n))^{\vee})^{\ast})   \xleftarrow{\partial^r_p}  
%  H_{p+1}(\mathbb{F}_{\bullet}^{<r>}(L^{\vee})^{\ast}) $$
% 
comparing the linear strands $\mathbb{F}_{\bullet}^{<r+1>}((L+(x_n))^{\vee})^{\ast}$ and $\mathbb{F}_{\bullet}^{<r>}(L^{\vee})^{\ast}$ which are essentially the same (modulo a shifting), so the induced morphisms in homology are  isomorphisms and the result follows.
\end{proof}

\subsection{Examples} \label{Lyubeznik3}
The MV-splitting techniques developed in the previous section allow us to compute the Lyubeznik table of many families of graphs directly from the combinatorics of the graph without an explicit computation of the corresponding local cohomology modules. The idea is to choose a convenient splitting vertex and reduce the computation to the case of a graph in a smaller number of vertices.

An interpretation of  Proposition \ref{whisker} is that the Lyubeznik table remains  invariant under the operation of removing whiskers. 
In this way we may simplify our original graph and, in the case of acyclic graphs, we can deduce the triviality of the Lyubeznik table by reducing the computation to the case of a single edge. In particular we get:

\begin{corollary}
The Lyubeznik table of the cover ideal of a {\bf path} is trivial.
\end{corollary}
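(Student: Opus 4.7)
The plan is to proceed by induction on the number of vertices $n$ in the path $P_n$ with vertex set $\{x_1, \dots, x_n\}$ and edges $\{x_i, x_{i+1}\}$ for $i=1,\dots, n-1$.

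For the base case $n=2$, the path is a single edge, so its cover ideal is $J(P_2) = (x_1, x_2) \subseteq \K[x_1, x_2]$. Then $R/J(P_2) \cong \K$ is a field of dimension $0$, and its Lyubeznik table consists of the single entry $\lambda_{0,0} = 1$, which is trivial by definition.

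For the inductive step with $n \geq 3$, observe that the endpoint $x_n$ has neighbour set $N(x_n) = \{x_{n-1}\}$, so it is a vertex of degree one. Applying Proposition \ref{whisker} yields $\Lambda(R/J(P_n)) = \Lambda(R/J(P_n \setminus \{x_n\}))$. Since $P_n \setminus \{x_n\} = P_{n-1}$ is a path on $n-1$ vertices, the induction hypothesis tells us that the latter Lyubeznik table is trivial, and hence so is $\Lambda(R/J(P_n))$.

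There is no real obstacle here: the statement is a direct corollary of the whisker-removal invariance, together with the fact that the Lyubeznik table of a complete intersection (in this case the maximal ideal generated by a regular sequence of length two) is trivial by Cohen-Macaulayness. The content is entirely absorbed by Proposition \ref{whisker}, and the path is precisely the simplest family of graphs that can be stripped down to a single edge by successive whisker removals.
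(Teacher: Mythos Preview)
Your proof is correct and follows exactly the approach the paper intends: the corollary is presented immediately after the paper explains that Proposition~\ref{whisker} allows one to strip whiskers until only a single edge remains, and you have simply written out that induction explicitly, with the base case handled by the Cohen--Macaulayness of $R/(x_1,x_2)$.
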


\begin{corollary}
The Lyubeznik table of the cover ideal of a {\bf tree} is trivial.
\end{corollary}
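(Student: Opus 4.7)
The plan is to proceed by induction on the number of vertices $n$ of the tree $T$, with the key combinatorial input being that every tree on at least two vertices has a leaf (a vertex of degree one). The base case $n=2$ amounts to a single edge, whose cover ideal $(x_1,x_2) \subseteq \K[x_1,x_2]$ is a complete intersection; the quotient is Cohen-Macaulay and therefore has trivial Lyubeznik table, as recalled at the beginning of Section \ref{Lyubeznik}. (Equivalently, this is the $n=2$ case of the previous corollary on paths.)

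For the inductive step, let $T$ be a tree on $n \geq 3$ vertices and pick a leaf $x_n \in V_T$. Removing $x_n$ produces the subgraph $T \setminus \{x_n\}$, which is connected and acyclic, hence still a tree, now on $n-1$ vertices. The inductive hypothesis gives that $\Lambda(\K[x_1,\dots,x_{n-1}]/J(T\setminus\{x_n\}))$ is trivial. Since $x_n$ has degree one in $T$, Proposition \ref{whisker} applies and yields the equality $\Lambda(R/J(T)) = \Lambda(\K[x_1,\dots,x_{n-1}]/J(T\setminus\{x_n\}))$, so $\Lambda(R/J(T))$ is trivial as well.

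There is no substantive obstacle: the nontrivial work has already been absorbed into Proposition \ref{whisker}, and the only combinatorial points to verify are that every tree has a leaf and that deleting a leaf from a tree yields a tree, both of which are standard. One could equally phrase the argument as iteratively stripping whiskers until only a single edge remains, in which case the invariance asserted in Proposition \ref{whisker} propagates triviality all the way up from the base case.
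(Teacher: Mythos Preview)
Your proof is correct and follows essentially the same approach as the paper: the paper explicitly states that Proposition \ref{whisker} allows one to reduce any acyclic graph to a single edge by iteratively removing whiskers, which is precisely your induction argument formalized.
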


Using Corollary \ref{disjoint_trivial} we can also consider the case of forests.

\begin{corollary}
The Lyubeznik numbers of the cover ideal of a {\bf forest} with $c$ connected components are 
$$\lambda_{d-2k,d-k}(R/J(G))= {c \choose k+1} \hskip 5mm {\rm for} \hskip 5mm k=0,\dots , c -1$$
and the rest of Lyubeznik numbers are zero..
\end{corollary}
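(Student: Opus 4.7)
The plan is to reduce the statement to a direct application of Corollary \ref{disjoint_trivial} combined with the previous corollary on trees. Let $T_1,\dots,T_c$ be the connected components of $G$; each is a tree supported on a pairwise disjoint set of variables. Since every edge of $G$ lies in exactly one of the $T_i$, the primary decomposition of $J(G)$ splits according to the components and yields
$$J(G)=\bigcap_{\{x_i,x_j\}\in E_G}(x_i,x_j)=J(T_1)\cap\cdots\cap J(T_c),$$
where each factor involves only the variables attached to the corresponding tree. This is exactly the ``disjoint variables'' decomposition required by Corollary \ref{disjoint_trivial}.

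Two things need to be checked in order to plug into that corollary. First, each $R/J(T_i)$ has trivial Lyubeznik table: this is the content of the preceding corollary for trees, combined with the standard fact that extending the ambient polynomial ring by fresh variables shifts both Lyubeznik indices by the same amount (equivalently, use Remark \ref{Bass_restriction}), so a trivial table in the smaller ring lifts to a trivial table in $R$. Second, all of these tables must have the same size, i.e., $\dim R/J(T_i)=d$ for every $i$; this is automatic because each $J(T_i)$ is still a height two ideal in $R$, hence $\dim R/J(T_i)=n-2=d$.

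With these two ingredients in place, Corollary \ref{disjoint_trivial} applies verbatim and delivers
$$\lambda_{d-2k,d-k}(R/J(G))={c \choose k+1},\qquad k=0,\dots,c-1,$$
with every remaining Lyubeznik number equal to zero, which is the desired formula. I do not expect any real obstacle: the combinatorial decomposition along components is immediate, the triviality of each tree's Lyubeznik table has just been established, and the Thom--Sebastiani-type machinery of Proposition \ref{dual_join} has already been packaged into the induction underlying Corollary \ref{disjoint_trivial}, so no further bookkeeping of connecting morphisms is needed.
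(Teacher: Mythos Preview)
Your proof is correct and follows exactly the approach the paper intends: the paper simply says ``Using Corollary \ref{disjoint_trivial} we can also consider the case of forests'' and states the result, relying on the just-proved triviality of the Lyubeznik table for trees. You have merely spelled out the two hypotheses of Corollary \ref{disjoint_trivial} (trivial tables for each component and common Krull dimension $d=n-2$), which the paper leaves implicit.
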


Another source of examples of trivial Lyubeznik tables is using Proposition \ref{dominating}. It says that the {\bf cone} of any graph $G$, has a trivial Lyubeznik table. In particular:

\begin{corollary}
The Lyubeznik table of the cover ideal of a {\bf wheel} is trivial.
\end{corollary}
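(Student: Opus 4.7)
The plan is to invoke Proposition \ref{dominating} directly. By definition the wheel $W_n$ is the cone over the cycle $C_{n-1}$: it consists of $n-1$ vertices arranged in a cycle together with a central hub vertex, say $x_n$, which is joined by an edge to every vertex of the cycle. In particular $x_n$ has degree $n-1$, so it is a vertex of maximal degree of exactly the type treated in Proposition \ref{dominating}.

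Before applying that proposition one should verify that $x_n$ is indeed a splitting vertex in the sense of \cite[Theorem 4.2]{HV}. Since a wheel has at least four vertices, the hub is not isolated and the induced subgraph $W_n\setminus\{x_n\}=C_{n-1}$ is a connected cycle rather than a union of isolated vertices, so the degenerate cases excluded in \cite[Theorem 4.2]{HV} do not occur and $x_n$ is a genuine splitting vertex. Equivalently, the decomposition
\[
J(W_n) = J(C_{n-1}) \cap \bigcap_{i=1}^{n-1}(x_i,x_n)
\]
is an MV-splitting by the general argument given after Definition \ref{def_MV}.

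With the splitting-vertex hypothesis in place, Proposition \ref{dominating} yields immediately that $\Lambda(R/J(W_n))$ is trivial. There is essentially no obstacle here: the statement is a one-line specialization of the dominating-vertex result, and the only thing that needs checking is the routine splitting condition for the hub, which is automatic from the fact that the wheel is a cone over a graph with at least three vertices.
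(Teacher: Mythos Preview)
Your proof is correct and follows exactly the paper's approach: the corollary is stated immediately after the remark that Proposition \ref{dominating} shows the cone of any graph has trivial Lyubeznik table, and a wheel is precisely the cone over a cycle. Your added verification that the hub is a genuine splitting vertex (the non-degenerate case of \cite[Theorem 4.2]{HV}) is a helpful detail the paper leaves implicit.
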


Another way of simplifying our original graph is by means of Proposition \ref{handle}, which says 
 that we can remove what we call {\bf handles} (or equivalently $3$ and $4$-cycles) having the following shape:
 
\vskip -2cm \begin{center}
\includegraphics[width=100mm,scale=0.5]{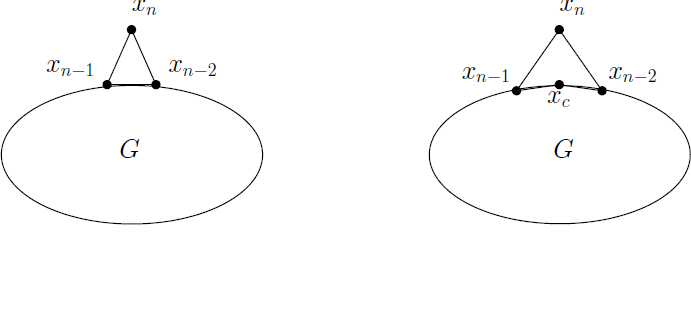}
\end{center}

\vskip -1cm This gives as a very visual method to reduce the computation of Lyubeznik tables of graphs. For example, removing the yellow vertices indicated below do not modify the Lyubeznik table and thus, in the end, we see that the Lyubeznik table of the following graph is trivial.
 
\begin{center}
\includegraphics[width=120mm,scale=0.5]{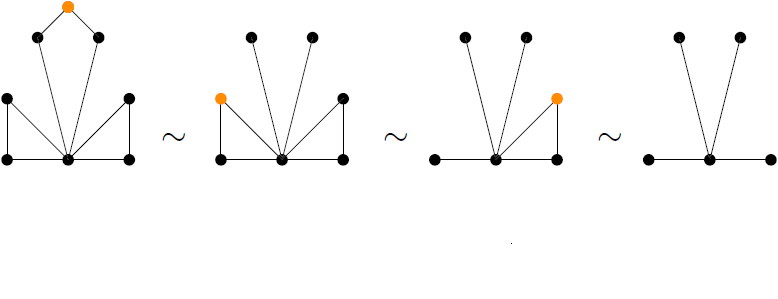}
\end{center}

\vskip -1.5cm Now we turn our attention to the case of cycles. Applying  iteratively Corollary \ref{deg_two_2}  we will obtain a closed formula for the Lyubeznik numbers.  To illustrate our methods we present  the case of a $6$-cycle. The corresponding graphs $L$ and $H$ are represented  as follows:
\begin{center}
\includegraphics[width=100mm,scale=0.3]{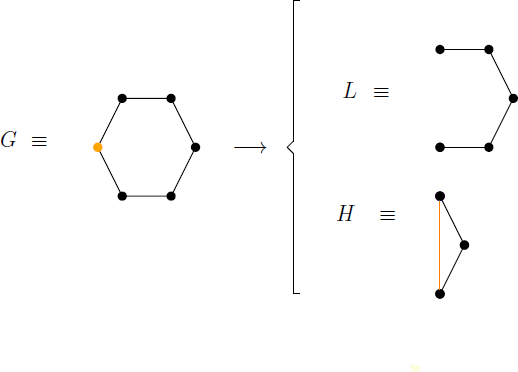}
\end{center}

Notice that $L$ is a path so it has trivial Lyubeznik table and thus we can use Corollary \ref{deg_two_2}. Moreover we have that $H$ is a $3$-cycle.

%Notice that all the vertices in a cycle have degree two  and when we remove a vertex we obtain a path. 

\begin{proposition} \label{cycle}
The Lyubeznik numbers of the cover ideal of a {\bf $n$-cycle} with $n=3k+\ell$, $\ell\in\{-1,0,1\}$, are 

$\lambda_{d-3i,d-i}(R/J(G))= 1, \hskip 5mm   \lambda_{d-3i-1,d-i}(R/J(G))= 1  \hskip 5mm {\rm for} \hskip 5mm i=0,\dots , k -2. $

$\lambda_{d-3i,d-i}(R/J(G))= 1   \hskip 7mm {\rm for} \hskip 5mm i=k -1. $

\noindent and the rest of Lyubeznik numbers are zero.
\end{proposition}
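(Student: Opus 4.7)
The plan is to prove the statement by induction on $n$, with Corollary \ref{deg_two_2} as the main engine in the inductive step.

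For the base cases $n\in\{3,4\}$, I will apply Proposition \ref{handle}: the triangle $C_3$ has a degree-two vertex whose two neighbours are adjacent (condition (i) of Proposition \ref{handle}), and the square $C_4$ has a degree-two vertex whose two neighbours share a common neighbour (condition (ii)). In both cases we conclude $\Lambda(R/J(C_n))=\Lambda(R/J(P_{n-1}))$, and then iterating Proposition \ref{whisker} reduces $P_{n-1}$ to a single edge, whose Lyubeznik table is trivial. This agrees with the statement for $k=1$ (which prescribes only $\lambda_{d,d}=1$).

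For the inductive step with $n\geq 5$, I choose a degree-two splitting vertex $x_n$ of $C_n$ with $N_{C_n}(x_n)=\{x_{n-1},x_{n-2}\}$. Because $n\geq 5$, these two neighbours are non-adjacent and share no common neighbour other than $x_n$, so neither condition of Proposition \ref{handle} holds. To apply Corollary \ref{deg_two_2} I must check the hypothesis that the connecting morphisms $\partial^r_p$ of Proposition \ref{deg_two} vanish for all $r>2$ and all $p$: since $G\setminus\{x_n\}=P_{n-1}$ is a path, $\Lambda(R/J(P_{n-1}))$ is trivial, so its only non-zero Lyubeznik number is $\lambda_{n-2,n-2}$; hence the codomain $\K^{\lambda_{p+1,n-r}(R/L)}$ of each $\partial^r_p$ with $r>2$ is zero, and the hypothesis is automatic.

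The key geometric step is to identify the auxiliary graph $H$ from Proposition \ref{deg_two}. I will compute $M,N$, and $M+N$ explicitly and reduce modulo $(x_{n-2},x_{n-1},x_n)$: the resulting ideal $J(H)$ lives in $\K[w_1,\dots,w_{n-3}]$, where $w_1,\dots,w_{n-3}$ are the $n-3$ vertices of $C_n$ different from $x_n,x_{n-1},x_{n-2}$. The first intersection contributes the edges $\{w_i,w_{i+1}\}$ of the interior path $w_1-w_2-\cdots-w_{n-3}$ inherited from $C_n$, while the second (bipartite) intersection, using $N(x_{n-2})\cap\{w_i\}=\{w_1\}$ and $N(x_{n-1})\cap\{w_i\}=\{w_{n-3}\}$, contributes the single extra edge $\{w_1,w_{n-3}\}$. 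Joining these gives $H=C_{n-3}$ when $n\geq 6$, and $H=P_2$ when $n=5$ (a case whose Lyubeznik table is trivial).

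With these ingredients in hand, Corollary \ref{deg_two_2} supplies $\lambda_{d,d}(R/J(C_n))=1$, $\lambda_{d-1,d}(R/J(C_n))=1$, and the shift $\lambda_{p,r}(R/J(C_n))=\lambda_{p,r-2}(R'/J(C_{n-3}))$ for $r\leq d-1$, $p\leq r-2$; all other entries vanish. Feeding in the inductive hypothesis for $C_{n-3}$ (which has parameters $k'=k-1$, $\ell'=\ell$) and reindexing $i=j+1$ recovers the two families $\lambda_{d-3i,d-i}=\lambda_{d-3i-1,d-i}=1$ for $i=0,\dots,k-2$ together with the terminal entry $\lambda_{d-3(k-1),d-(k-1)}=1$. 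The main obstacle will be verifying cleanly that $H=C_{n-3}$, since this requires tracking the monomial primary decompositions of $M$ and $N$ through intersection, sum, and the specialization $x_{n-2}=x_{n-1}=x_n=0$; once this combinatorial identification is established, the remainder of the argument is routine reindexing.
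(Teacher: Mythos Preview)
Your proof is correct and follows essentially the same approach as the paper: both arguments use Corollary~\ref{deg_two_2} together with the observations that removing a vertex from $C_n$ yields a path (hence trivial Lyubeznik table for $L$) and that the auxiliary graph $H$ is $C_{n-3}$, then induct on $k$. Your version simply fills in details the paper leaves implicit---the base cases $n=3,4$, the vanishing of the connecting morphisms, and the explicit combinatorial identification of $H$.
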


\begin{proof}
Notice that, if we denote $G$ the $n$-cycle, then $G\setminus \{x_n\}$ is a path so its Lyubeznik table is trivial. On the other hand, the graph $H$ obtained from $G$ is a $(n-3)$-cycle so we can use induction on $k$ and Corollary \ref{deg_two_2} to produce the formula for the Lyubeznik numbers.
\end{proof}

\begin{example}
The Lyubeznik table of a cycle $C_n$ in  $n$ vertices  for  $n=5,\dots , 11$ are:
{\tiny $$
                  \left(
                    \begin{array}{cccc}
                     0& 0& \cellcolor{usc!50}1&0\\
                      & 0& 0&0\\
                      & & 0&\cellcolor{usc!50}1\\
                      & & &\cellcolor{usc!50}1\\
                    \end{array}
                  \right)
\hskip .5cm   \left(
                    \begin{array}{ccccc}
                    0& 0 & 0 & 0 & 0  \\
                     &0& 0& \cellcolor{usc!50}1&0\\
                     & & 0& 0&0\\
                      && & 0&\cellcolor{usc!50}1\\
                      && & &\cellcolor{usc!50}1\\
                    \end{array}
                  \right)
\hskip .5cm  \left(
                    \begin{array}{cccccc}
                    0& 0& 0 & 0 & 0 & 0  \\
                    &0& 0 & 0 & 0 & 0  \\
                    & &0& 0& \cellcolor{usc!50}1&0\\
                    & & & 0& 0&0\\
                    &  && & 0&\cellcolor{usc!50}1\\
                     & && & &\cellcolor{usc!50}1\\
                    \end{array}
                  \right)
\hskip .5cm \left(
                    \begin{array}{ccccccc}
                      0& 0& 0&0 & \cellcolor{usc!50}1 & 0& 0  \\                                    
                       & 0& 0& 0&0 & 0 & 0 \\
                       &  & 0& 0& 0&\cellcolor{usc!50}1 & 0\\
                      &  &  & 0& 0& \cellcolor{usc!50}1&0\\
                      &  &  & & 0& 0&0\\
                      &  &  & & & 0&\cellcolor{usc!50}1\\
                      &  &  & & & &\cellcolor{usc!50}1\\
                    \end{array}
                  \right)$$ $$\left(
                    \begin{array}{cccccccc}
                    0& 0& 0& 0& 0 & 0 & 0 & 0  \\
                    &  0& 0& 0&0 & \cellcolor{usc!50}1 & 0& 0  \\                                    
                     &  & 0& 0& 0&0 & 0 & 0 \\
                     &  &  & 0& 0& 0&\cellcolor{usc!50}1 & 0\\
                    &  &  &  & 0& 0& \cellcolor{usc!50}1&0\\
                     & &  &  & & 0& 0&0\\
                    &  &  &  & & & 0&\cellcolor{usc!50}1\\
                    &  &  &  & & & &\cellcolor{usc!50}1\\
                    \end{array}
                  \right) \hskip .5cm \left(
                    \begin{array}{ccccccccc}
                    0&0& 0& 0& 0& 0 & 0 & 0 & 0  \\
                   & 0& 0& 0& 0& 0 & 0 & 0 & 0  \\
                   & &  0& 0& 0&0 & \cellcolor{usc!50}1 & 0& 0  \\                                    
                    & &  & 0& 0& 0&0 & 0 & 0 \\
                    & &  &  & 0& 0& 0&\cellcolor{usc!50}1 & 0\\
                   & &  &  &  & 0& 0& \cellcolor{usc!50}1&0\\
                    & & &  &  & & 0& 0&0\\
                   & &  &  &  & & & 0&\cellcolor{usc!50}1\\
                   & &  &  &  & & & &\cellcolor{usc!50}1\\
                    \end{array}
                  \right) \hskip .5cm   
                  \left(
                    \begin{array}{cccccccccc}
                      0& 0& 0& 0& 0& 0& \cellcolor{usc!50}1 & 0  & 0 & 0  \\
                      & 0& 0& 0&0& 0& 0 & 0 & 0 & 0  \\
                      & & 0& 0& 0& 0& 0 & \cellcolor{usc!50}1 & 0& 0  \\
                      & & & 0& 0& 0&0 & \cellcolor{usc!50}1 & 0& 0  \\                                    
                     & & &  & 0& 0& 0&0 & 0 & 0 \\
                     & & &  &  & 0& 0& 0&\cellcolor{usc!50}1 & 0\\
                     & & & &  &  & 0& 0& \cellcolor{usc!50}1&0\\
                     & & & &  &  & & 0& 0&0\\
                     & & & &  &  & & & 0&\cellcolor{usc!50}1\\
                     & & & &  &  & & & &\cellcolor{usc!50}1\\
                    \end{array}
                  \right)$$}
\end{example}
Certainly  a new source for finding non trivial Lyubeznik tables is to consider graphs obtained by joining  cycles. After removing whiskers and handles we reduce to the case of cycles joined by paths or sharing edges  in such a way that we can still find  degree two vertices that we can remove in order to simplify the graph. For example, the complement of a $6$-cycle

\begin{center}
\includegraphics[width=40mm,scale=0.5]{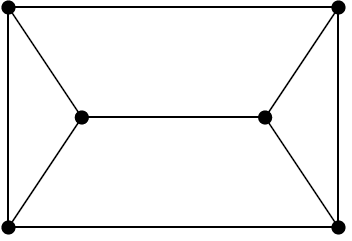}
\end{center}

%\vskip 2cm

\noindent can be interpreted as joining two $3$-cycles and two $4$-cycles. However all the vertices have degree three so we cannot apply our methods.

To start our study we will consider the following joining operation.

\begin{definition} \label{L-join}
Let $C_m$ and $C_n$ be two cycles in $m$ and $n$ vertices respectively. We say that they are $L$-joined and we will denote by $C_m \divideontimes C_n$ the corresponding graph if they share at most one edge or if they are  joined by a path. % or if they are joined by a $3$ or $4$-cycle.  
\end{definition}

When we remove a vertex from $C_m \divideontimes C_n$ we do not obtain a subgraph $L$ with trivial Lyubeznik table. However, we still can use 
Proposition \ref{deg_two} to obtain the following formula for the Lyubeznik numbers. 

\begin{proposition} \label{two_cycles}
Let $C_m$ and $C_n$ be two cycles  with  $m\leq n$ and $m=3k_1+\ell_1$, $n=3k_2+\ell_2$ and $\ell_1, \ell_2 \in\{-1,0,1\}$. 
Then the Lyubeznik numbers of the cover ideal of $G= C_m \divideontimes C_n$  are 

$\lambda_{d-3i,d-i}(R/J(G))= i+1, \hskip 5mm   \lambda_{d-3i-1,d-i}(R/J(G))= i+2  \hskip 5mm {\rm for} \hskip 5mm i=0,\dots , k_1 -2, $

$\lambda_{d-3i,d-i}(R/J(G))= k_1, \hskip 9mm   \lambda_{d-3i-1,d-i}(R/J(G))= k_1  \hskip 5mm {\rm for} \hskip 5mm i=k_1-1,\dots , k_2 -2,$

$\lambda_{d-3i,d-i}(R/J(G))= k_1+k_2-i-1, \hskip 5mm   \lambda_{d-3i-1,d-i}(R/J(G))= k_1+k_2-i-2  \hskip 5mm $

\noindent ${\rm for} \hskip 3mm i=k_2-1,\dots , k_1+k_2 -2$
and the rest of Lyubeznik numbers are zero. Here we follow the convention that in the case where $k_1=k_2$ the set of indices $i=k_1-1,\dots , k_2 -2$ is empty.
\end{proposition}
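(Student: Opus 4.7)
The plan is to argue by induction on $k_2$, reducing the larger cycle $C_n$ (without loss of generality $m \le n$ throughout). In the inductive step, with $n \ge 6$, I would pick a degree-two vertex $x_n$ of $C_n$ located far from the L-join, chosen so that its two neighbours $x_{n-1}, x_{n-2}$ in $C_n$ are neither adjacent nor share a common neighbour other than $x_n$. Then conditions (i) and (ii) of Proposition \ref{handle} both fail, putting us in the setup of Proposition \ref{deg_two}. The graph $G\setminus\{x_n\}$ is $C_m$ joined to a path via the original L-join; iterated whisker removal (Proposition \ref{whisker}) reduces it to $C_m$, so its Lyubeznik numbers are given by Proposition \ref{cycle}. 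The auxiliary graph $H$ from Proposition \ref{deg_two} should be, for $n \ge 6$, exactly $C_m \divideontimes C_{n-3}$ with the same L-join structure: removing three consecutive vertices of $C_n$ leaves a three-vertex gap, and the added edge between the remaining neighbours of $x_{n-1}$ and $x_{n-2}$ closes it into a cycle of length $n-3$. By the inductive hypothesis applied with $k_2$ replaced by $k_2-1$, the Lyubeznik table of $R/J(H)$ is already known.

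The hard part will be verifying the hypothesis of Proposition \ref{deg_two} that the connecting morphisms $\partial^r_p$ in the long exact sequence of Discussion \ref{MVsplit} vanish for all $r > 2$. My strategy is a position-counting argument: by Proposition \ref{cycle} the nonzero Lyubeznik numbers of $R/L \sim R/J(C_m)$ occupy only the positions $(d-3i,d-i)$ and $(d-3i-1,d-i)$, and by the inductive hypothesis those of $R/(L+K)$ are read off (up to the dimension shift induced by the identification $\Lambda(R/(L+K))$ with a shift of $\Lambda(R/J(C_m\divideontimes C_{n-3}))$ that is intrinsic to the proof of Proposition \ref{deg_two}) from the formula with $k_2$ replaced by $k_2-1$. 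Substituting into the index pairs $(p-1,d-r-1)$ and $(p,d-r)$ that enter $\partial^r_p$, one then checks that for every $(p,r)$ with $r > 2$ at most one of the source $\K^{\lambda_{p-1,d-r-1}(R/(L+K))}$ or target $\K^{\lambda_{p,d-r}(R/L)}$ is non-zero, so the morphism vanishes trivially.

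With the vanishing established, Proposition \ref{deg_two} will give the recursion $\lambda_{p,r}(R/J(G)) = \lambda_{p,r}(R/J(G\setminus\{x_n\})) + \lambda_{p,r-2}(R/J(H))$ together with the adjustment $\lambda_{d-1,d}(R/J(G)) = \lambda_{d-1,d}(R/J(G\setminus\{x_n\})) + 1$. Substituting the known values and matching term by term against the claimed formula should then be routine: each target entry $i+1$, $i+2$, $k_1$ or $k_1+k_2-i-1$ arises by adding the $C_m$-contribution $1$ (where it is nonzero) to the corresponding shifted entry from $C_m \divideontimes C_{n-3}$. Base cases with $m\in\{3,4\}$ reduce via Proposition \ref{handle} to a single cycle, so the formula specialises to Proposition \ref{cycle} at $k_1 = 1$. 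The remaining small-parameter cases, in particular $m=n=5$ where $H$ degenerates to a disconnected forest and Proposition \ref{deg_two} cannot be applied in the clean way above, are handled separately using Corollary \ref{disjoint_trivial} together with a direct analysis of the (now genuinely non-trivial) connecting morphisms in the long exact sequence.
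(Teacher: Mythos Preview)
Your approach is workable but differs from the paper's in a structurally significant way: you split off a vertex from the \emph{larger} cycle $C_n$, whereas the paper splits off a degree-two vertex from the \emph{smaller} cycle $C_m$ and runs the induction on $k_1$. In the paper's setup $L = G\setminus\{x_m\}$ has the Lyubeznik table of $C_n$ (by iterated whisker removal), the auxiliary graph is $H = C_{m-3}\divideontimes C_n$, and since $m-3 \le m \le n$ the ordering is preserved throughout the recursion; the process terminates once the small cycle becomes a handle (and at $m=5$ one checks directly that $H$ has the table of $C_n$). This makes the induction linear in $k_1$ with an unambiguous base case and no role-swapping.

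Your choice forces a more delicate scheme. Reducing $n\to n-3$ can yield $n-3 < m$, after which the two cycles exchange roles; consequently $k_2$ alone is not a valid induction parameter (when $k_1=k_2$ the relabelled $k_2$ does not decrease). The argument can be repaired by inducting on $k_1+k_2$ or on the total vertex count, but as written your base case ``$m\in\{3,4\}$'' concerns $k_1$ rather than your declared induction variable $k_2$, and the role-swap is never addressed. That said, your explicit position-counting sketch for the vanishing of the connecting morphisms $\partial^r_p$ is a real contribution: the paper invokes Proposition~\ref{deg_two} without verifying this hypothesis, so whichever cycle one chooses to shrink, an argument of the kind you outline is needed to make the proof complete.
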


\begin{proof}
We are going to use Proposition \ref{deg_two} so we will follow the same terminology used in its proof. 
Pick a splitting vertex $x_m$ of degree  two in $C_m$. Notice that $G\setminus \{x_m\}$ has the same Lyubeznik table as $C_n$ and the graph $H$ associated to $G$ is $C_{m - 3} \divideontimes C_n$ so we can proceed by induction on $k_1$.  The fact that $C_m$ and $C_n$ share at most one edge implies that the induction step will end up in a graph $H$ having the Lyubeznik table of $C_n$.  
\end{proof}

We illustrate the case  $G=C_6 \divideontimes C_6$ as follows:

%\begin{center}
%\includegraphics[width=80mm,scale=0.5]{pic5.png}
%\end{center}
\begin{center}
\includegraphics[width=110mm,scale=0.7]{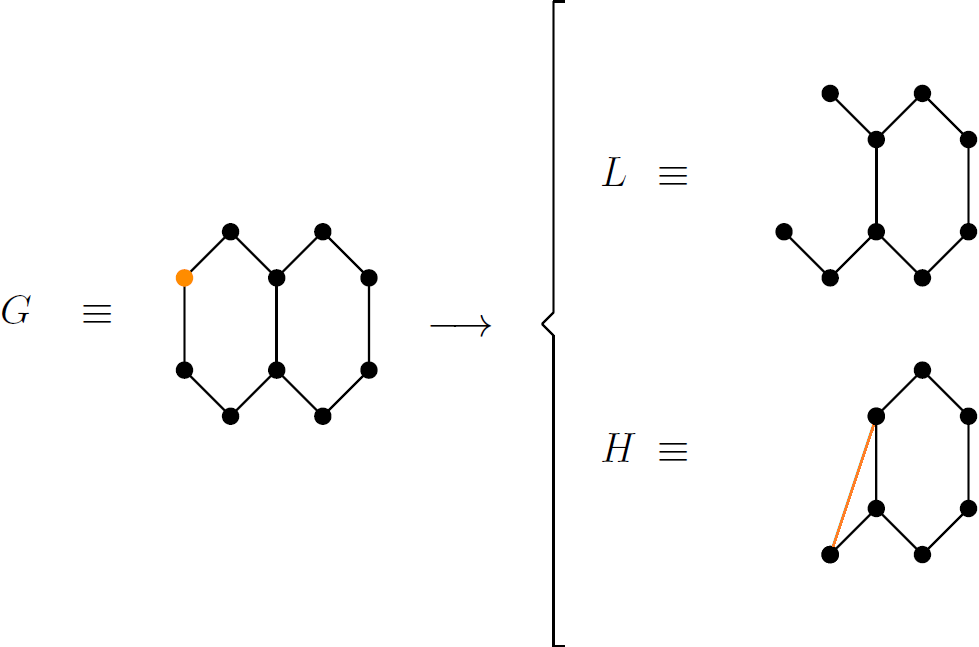}
\end{center}

%\begin{figure}
%\centering
%\begin{minipage}{.5\textwidth}
%  \centering
%  \includegraphics[width=70mm,scale=0.5]{pic5.png}
%  %\captionof{figure}{A figure}
%  \label{fig:test1}
%\end{minipage}%
%\begin{minipage}{.5\textwidth},
%  \centering
%  \includegraphics[width=70mm,scale=0.5]{pic6.png}
%  %\captionof{figure}{Another figure}
%  \label{fig:test2}
%\end{minipage}
%\end{figure}

More generally we can consider the family of graphs obtained using the $\divideontimes$ operation, which is a family that includes {\bf cycles with chords} or {\bf cactus graphs}. We can iterate the methods used in Proposition \ref{two_cycles} in order to compute the Lyubeznik table of any $L$-joined graph of the form $C_{n_1}\divideontimes \cdots \divideontimes C_{n_r}$. We are not going to give a closed formula for its Lyubeznik table  since it depends on the indices $n_i=3k_i+\ell_i$, with $\ell_i \in\{-1,0,1\}$, for $i=1,\dots, r$  and one should distinguish too many cases which makes it very tedious and not very illustrative.

We will just point out that the non-vanishing Lyubeznik numbers are 

$\lambda_{d-3i,d-i}(R/J(G)), \hskip 5mm\lambda_{d-3i-1,d-i}(R/J(G))$ \hskip 5mm for $i=0,\dots, k_1+\cdots + k_r - r$ 

 Moreover we have  $\lambda_{d,d}= 1$, $\lambda_{d-1,d}=r$ and $\lambda_{d-3(k_1+\cdots + k_r - r),d-(k_1+\cdots + k_r - r)}=1$.

\vskip 2mm

In the case that we have two cycles $C_m$ and $C_n$ sharing more than one edge we can still use iteratively  Proposition \ref{deg_two} to compute its Lyubeznik table. However, in this case we will not end up with a graph $H$ having the Lyubeznik table of $C_n$ as the following example shows.

\begin{example}  \label{cycle_sharing} 
Let $G$ be two $8$-cycles sharing four vertices. The Lyubeznik table of $G$ can be obtained, using Proposition \ref{deg_two}, from the Lyubeznik tables of two graphs $L_1$ and $H_1$. Notice that $L_1$ has the same Lyubeznik type  as an $8$-cycle $C_8$, but in order to get the Lyubeznik table of $H_1$ we have to apply  Proposition \ref{deg_two}
once again.  We illustrate the procedure as follows:
 
\begin{center}
\includegraphics[width=150mm,scale=0.3]{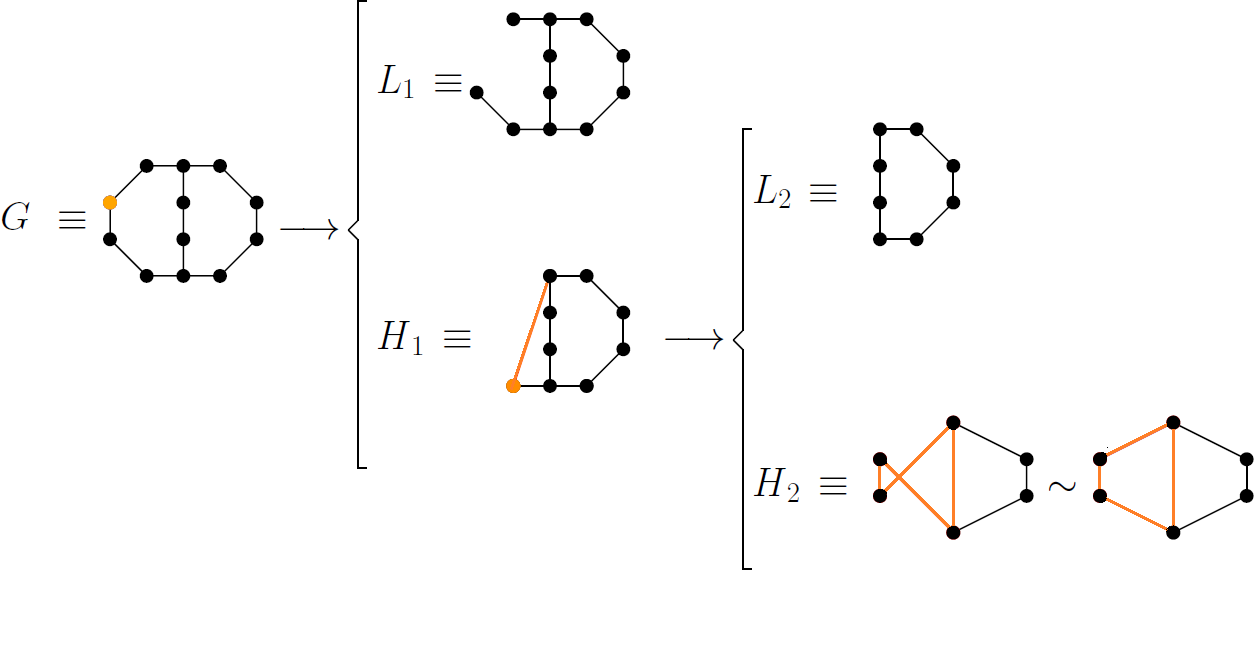}
\end{center}

We have that $L_2$ is an $8$-cycle so we know its Lyubeznik table by Proposition \ref{cycle}. On the other hand, 
 $H_2$ is a graph in $6$ vertices whose Lyubeznik table is trivial by using Proposition \ref{handle}.  Therefore we can use Proposition \ref{deg_two} to compute the Lyubeznik table of $H_1$

$$ 
 \left(
                    \begin{array}{cccccccc}
                     0&  0& 0&  0&  0 &  0 &  0 &  0  \\
                     &  0& 0&  0&  0 & \cellcolor{usc!50}1 &  0 &  0  \\
                     & &  0&  0& 0 &  0 &  0 & 0  \\
                     & & &  0&  0 &  0 & \cellcolor{usc!50}1&  0  \\
                     & & & & 0 & 0 &\cellcolor{usc!50} 1& 0  \\                                    
                     && & &  &  0 & 0 &  0 \\
                     && & &  &  &  0 & \cellcolor{usc!50}1 \\
                     && & & &  &  & \cellcolor{usc!50}1 \\
                    \end{array}
                  \right) 
\hskip .5cm ,    \hskip .5cm               
                 \left(
                    \begin{array}{ccccc}
                     \cellcolor{usc!80}0&\cellcolor{usc!80} 0&\cellcolor{usc!80} 0 & \cellcolor{usc!80}0 &\cellcolor{usc!80} 0  \\
                     & \cellcolor{usc!80} 0& \cellcolor{usc!80} 0 &  \cellcolor{usc!80}0 &\cellcolor{usc!80} 0 \\
                     & & \cellcolor{usc!80} 0 &  \cellcolor{usc!80}0 & \cellcolor{usc!80}0 \\
                      &  &  & \cellcolor{usc!80}0 &\cellcolor{usc!80} 0 \\
                      & &  &  & \cellcolor{usc!80}1 \\
                    \end{array}
                  \right)
\hskip .5cm   \rightsquigarrow  \hskip .5cm 
\left(
                    \begin{array}{cccccccc}
                     \cellcolor{usc!20}0& \cellcolor{usc!20} 0& \cellcolor{usc!80} 0&\cellcolor{usc!80} 0& \cellcolor{usc!80}0 &\cellcolor{usc!80} 0 & \cellcolor{usc!80}0 & \cellcolor{usc!20}0  \\
                     & \cellcolor{usc!20}0& \cellcolor{usc!20}0& \cellcolor{usc!80}0&\cellcolor{usc!80} 0 & \cellcolor{usc!80}1 & \cellcolor{usc!80}0 & \cellcolor{usc!20}0  \\
                     & & \cellcolor{usc!20}0&\cellcolor{usc!20} 0& \cellcolor{usc!80}0 &\cellcolor{usc!80} 0 &\cellcolor{usc!80} 0 &\cellcolor{usc!20} 0  \\
                     & & & \cellcolor{usc!20}0& \cellcolor{usc!20}0 & \cellcolor{usc!80}0 & \cellcolor{usc!80}1&\cellcolor{usc!20} 0  \\
                     & & & & \cellcolor{usc!20}0 & \cellcolor{usc!20}0 &\cellcolor{usc!80} 2& \cellcolor{usc!20}0  \\                                    
                     && & &  &\cellcolor{usc!20} 0 & \cellcolor{usc!20}0 & \cellcolor{usc!20}0 \\
                     && & &  &  & \cellcolor{usc!20}0 & \cellcolor{usc!50} 2 \\
                     && & & &  &  & \cellcolor{usc!50}1 \\
                    \end{array}
                  \right)
$$ 

From the Lyubeznik table of $H_1$ given above and the one for $L_1$ we deduce the Lyubeznik table of $G$.

$$
                  \left(
                    \begin{array}{ccccccccccc}
                     0& 0& 0& 0& 0& 0& 0& 0 & 0 & 0 & 0  \\
                     & 0& 0& 0& 0& 0& 0& 0 & 0  & 0 & 0  \\
                     & & 0& 0& 0&0& 0& 0 & 0 & 0 & 0  \\
                     & & & 0& 0& 0& 0& 0 & 0 & 0& 0  \\
                     & & & & 0& 0& 0&0 & \cellcolor{usc!50}1 & 0& 0  \\                                    
                     && & &  & 0& 0& 0&0 & 0 & 0 \\
                     && & &  &  & 0& 0& 0&\cellcolor{usc!50}1 & 0\\
                     && & & &  &  & 0& 0& \cellcolor{usc!50}1&0\\
                     && & & &  &  & & 0& 0&0\\
                     && & & &  &  & & & 0&\cellcolor{usc!50}1\\
                     && & & &  &  & & & &\cellcolor{usc!50}1\\
                    \end{array}
                  \right)
\hskip .5cm   \rightsquigarrow  \hskip .5cm 
\left(
                    \begin{array}{ccccccccccc}
                     0& 0& \cellcolor{usc!20}0& \cellcolor{usc!20}0&\cellcolor{usc!80} 0&\cellcolor{usc!80} 0&\cellcolor{usc!80} 0&\cellcolor{usc!80} 0 &\cellcolor{usc!80} 0 & \cellcolor{usc!20}0 & 0  \\
                     & 0& 0& \cellcolor{usc!20}0&\cellcolor{usc!20} 0&\cellcolor{usc!80} 0&\cellcolor{usc!80} 0& \cellcolor{usc!80}1 &\cellcolor{usc!80} 0  & \cellcolor{usc!20}0 & 0  \\
                     & & 0& 0&\cellcolor{usc!20} 0&\cellcolor{usc!20}0& \cellcolor{usc!80}0& \cellcolor{usc!80}0 &\cellcolor{usc!80} 0 &\cellcolor{usc!20} 0 & 0  \\
                     & & & 0& 0&\cellcolor{usc!20} 0& \cellcolor{usc!20}0& \cellcolor{usc!80}0 & \cellcolor{usc!80}1 & \cellcolor{usc!20}0& 0  \\
                     & & & & 0& 0&\cellcolor{usc!20} 0&\cellcolor{usc!20}0 &\cellcolor{usc!80} 3 & \cellcolor{usc!20}0& 0  \\                                    
                     && & &  & 0& 0& \cellcolor{usc!20}0&\cellcolor{usc!20}0 &\cellcolor{usc!20} 0 & 0 \\
                     && & &  &  & 0& 0&\cellcolor{usc!20} 0&\cellcolor{usc!50}3 & 0\\
                     && & & &  &  & 0& 0& \cellcolor{usc!50}2&0\\
                     && & & &  &  & & 0& 0&0\\
                     && & & &  &  & & & 0&\cellcolor{usc!50}2\\
                     && & & &  &  & & & &\cellcolor{usc!50}1\\
                    \end{array}
                  \right)
$$
\end{example}

\vskip 5mm

\begin{remark}
The number  of connected graphs in a given number of vertices is very large so it seems infeasible to
 find all the possible configurations of Lyubeznik tables.  If the number of vertices is small, we can use our methods to reduce the number of examples  
 we have to consider and compute the Lyubeznik numbers of the remaining graphs using the algorithm presented in \cite{AF13}.  

\vskip 2mm

We made the computations in the case we have either $4,5$ or $6$ vertices and the possible Lyubeznik types are reflected in the following table.
Not surprisingly, having trivial Lyubeznik table is the most common situation.

\def\arraystretch{1.3}
\begin{table}[!ht] \label{table:T2}
\centering
\begin{tabular}{|c|c|c|c|c|}
\hline
\(n\) & \( {\tt trivial} \) & \({\tt cycle} \) & \({\tt {complement \hskip 1mm cycle}} \)& \({\tt total} \)\\[1.5pt] \hline
\hline
\(4\) & \( 6\) & \( - \)  & \( - \) & \( 6\) \\[1.5pt] \hline
\(5\) & \(20\) & \( 1 \)  & \( - \)  & \( 21\)\\[1.5pt] \hline
\(6\) & \(106\) & \( 5 \)  & \(  1\) & \( 112\) \\[1.5pt] \hline
\end{tabular}
\vspace{5pt}
\caption{ Number of graphs in  $n$ vertices with given Lyubeznik type.}
\end{table}

In order to expand this list  it would be desirable to develop techniques to deal with splitting vertices of degree bigger than two and 
identify graphs with non trivial Lyubeznik tables other than cycles or complement of cycles. 
\end{remark}

\section{Bass numbers of local cohomology modules of cover ideals of graphs} \label{Bass}

Let $G$ be a simple graph and, given $\alpha \in \{0,1\}^n$, we denote by $G_\alpha$ the subgraph of $G$ obtained by removing the vertices $x_i$ such that $\alpha_i=0$.
Indeed we will only consider those $\alpha$'s for which $G_\alpha$ is not a set of isolated vertices, which means that $E_{G_\alpha}\neq \emptyset$. Notice that the cover ideal of $G_\alpha$
is an ideal in the polynomial ring $R_{\fp_\alpha}=\K[x_i \hskip 2mm | \hskip 2mm\alpha_i=1].$
As we mentioned in Remark  \ref{Bass_restriction}, Bass numbers behave well with respect to restriction and thus, the Bass numbers of the local cohomology module 
$H_{J(G)}^{n-i}(R)$ with respect to the face ideal $\fp_\alpha$ are nothing but the Lyubeznik numbers corresponding to the subgraph $G_\alpha$. More precisely,
$$\mu_p(\fp_\alpha, H_{J(G)}^{n-i}(R) )= \lambda_{p,i} (R_{\fp_\alpha}/J(G_\alpha))$$
Certainly  $G_\alpha$ are not necessarily connected graphs and thus one needs to  use Proposition \ref{dual_join}  in order to compute these Lyubeznik numbers. 
%Is for this reason that we will fix some notation regarding connectivity that we will use in this section.
%
%
%We say that $G$ is {\bf $\ell$-vertex-connected} if $\ell<n$ and for every $\alpha$ such that $|{\bf 1}-\alpha|< \ell$  the induced graph $G_\alpha$ is connected. The {\bf vertex-connectivity} of $G$, denoted by
%$\kappa(G)$, is defined as the maximum integer $\ell$ such that $G$ is $\ell$-vertex-connected.
In what follows we will denote $c_\alpha$ as the number of connected components of $G_\alpha$ and 
$c_{max}=\max \{ c_\alpha \hskip 2mm | \hskip 2mm \alpha\in \{0,1\}^n\}$.

\begin{example} \label{cmax}
The maximal number of connected components of a path or a cycle is achieved when we remove every third vertex from the graph. 
For an $n$-path we have $c_{max}= \lceil \frac{n+1}{3} \rceil$. However for $n$-cycle we have to be a little more careful and we have $c_{max}= \lfloor \frac{n}{3} \rfloor$. In particular, for $n=3k - 1$ we have that the $n$-path has $c_{max}=k$ and the $n$-cycle has $c_{max}=k-1$. For the rest of cases they coincide.
\end{example}

\subsection{Linear injective resolutions} \label{Bass1}
Let $G$ be a graph that $R/J(G)$ is Cohen-Macaulay. The Bass numbers of this class of graphs are completely determined as it has been shown in \cite{AV14} using a simple spectral sequence argument. For completeness we include the result here but giving an equivalent description in terms of the connectivity of the subgraphs. 

\begin{proposition}
Let $J(G) \subseteq R$ be the cover ideal of a simple connected graph $G$. Then the following are equivalent
\begin{itemize}
\item[i)] $R/J(G)$ is Cohen-Macaulay.
\item[ii)] $\mu_p(\fp_\alpha, H^2_{J(G)}(R))= \delta_{p,|\alpha|-2}$
\item[iii)] $G_\alpha$ is connected and $\Lambda(R_{\fp_\alpha}/J(G_\alpha))$ is trivial for all $\alpha \in \{0,1\}^n$.
\end{itemize}
\end{proposition}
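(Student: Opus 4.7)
The plan hinges on the restriction formula from Remark \ref{Bass_restriction},
$$\mu_p(\fp_\alpha, H^r_{J(G)}(R)) = \lambda_{p, |\alpha|-r}(R_{\fp_\alpha}/J(G_\alpha)),$$
which converts Bass numbers at face ideals into Lyubeznik numbers of cover ideals of induced subgraphs. I will prove (i) $\Leftrightarrow$ (iii) directly, and (i) $\Leftrightarrow$ (ii) via the Alexander-dual correspondence and the Eagon--Reiner theorem.

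For (i) $\Rightarrow$ (iii): Cohen-Macaulayness of the Stanley--Reisner ring $R/J(G)$ descends to each restriction $R_{\fp_\alpha}/J(G_\alpha)$ (since the link of any face in a Cohen-Macaulay simplicial complex is again Cohen-Macaulay), and by \cite[Remark 4.2]{Al00} the Lyubeznik table $\Lambda(R_{\fp_\alpha}/J(G_\alpha))$ is then trivial. Triviality forces $\lambda_{d,d}(R_{\fp_\alpha}/J(G_\alpha)) = 1$, and Lemma \ref{highest} together with the Hochster--Huneke graph interpretation give that $G_\alpha$ is connected. For the converse (iii) $\Rightarrow$ (i), triviality of $\Lambda(R_{\fp_\alpha}/J(G_\alpha))$ for every admissible $\alpha$ yields $\lambda_{p,|\alpha|-r}(R_{\fp_\alpha}/J(G_\alpha)) = 0$ whenever $(p,|\alpha|-r)\neq(|\alpha|-2,|\alpha|-2)$; the restriction formula then produces $\mu_p(\fp_\alpha, H^r_{J(G)}(R)) = 0$ for every face ideal and every $r>2$. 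Since each $H^r_{J(G)}(R)$ is a straight module whose minimal injective resolution is built out of the indecomposables $E_\alpha$ supported on face ideals, the vanishing of all such Bass numbers forces $H^r_{J(G)}(R) = 0$ for $r>2$, i.e., $R/J(G)$ is Cohen-Macaulay.

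For (i) $\Leftrightarrow$ (ii): Via the restriction formula, condition (ii) becomes $\lambda_{p,|\alpha|-2}(R_{\fp_\alpha}/J(G_\alpha)) = \delta_{p,|\alpha|-2}$ for all admissible $\alpha$, which is the statement that the minimal injective resolution of $H^2_{J(G)}(R)$ is $2$-linear with Bass numbers equal to one on each face ideal $\fp_\alpha$ with $E_{G_\alpha}\neq\emptyset$. Through the Alexander-dual dictionary between linear strands of the injective resolutions of $H^r_{J}(R)$ and linear strands of the free resolution of $J^\vee$ (Subsection \ref{free}), this is equivalent to $I(G)=J(G)^\vee$ admitting a $2$-linear free resolution, which by the Eagon--Reiner theorem (the characterization cited right after Lemma \ref{highest}) is equivalent to (i).

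The main obstacle is the direction (ii) $\Rightarrow$ (i) within the last equivalence: condition (ii) a priori constrains only the Bass numbers of $H^2_{J(G)}(R)$, and one must invoke the Alexander-dual dictionary together with Eagon--Reiner to recover the vanishing of all higher local cohomology modules $H^r_{J(G)}(R)$ for $r>2$ that is characteristic of Cohen-Macaulayness.
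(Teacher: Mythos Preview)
The paper does not supply its own proof of this proposition; it simply attributes the result to a ``simple spectral sequence argument'' in \cite{AV14} and restates it in the language of connectivity of subgraphs. Your proof is therefore not being compared against an explicit argument in the paper, and most of it is sound. In particular, the equivalence (i) $\Leftrightarrow$ (iii) is handled correctly: Cohen--Macaulayness of the Stanley--Reisner ring does descend to each restriction $R_{\fp_\alpha}/J(G_\alpha)$ (these are links, as you note), whence triviality of all the subgraph Lyubeznik tables and connectedness via the Hochster--Huneke interpretation; conversely, triviality of all $\Lambda(R_{\fp_\alpha}/J(G_\alpha))$ forces every Bass number of $H^r_{J(G)}(R)$ to vanish for $r>2$, so these modules are zero. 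The implication (i) $\Rightarrow$ (ii) then follows immediately from (iii).

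The genuine gap is in (ii) $\Rightarrow$ (i). You claim that the $2$-linearity (with Bass numbers equal to one) of the injective resolution of $H^2_{J(G)}(R)$ is equivalent, ``through the Alexander-dual dictionary (Subsection~\ref{free}),'' to $I(G)$ having a $2$-linear free resolution. But Subsection~\ref{free} only gives the formula $\lambda_{p,n-r}(R/J)=\dim_\K H_p(\mathbb{F}_\bullet^{<r>}(J^\vee)^*)$, which relates Lyubeznik numbers to the \emph{homology} of individual linear-strand complexes. It does not furnish a direct equivalence between linearity of $\mathbb{I}_\bullet(H^2_{J}(R))$ and linearity of $\mathbb{L}_\bullet(J^\vee)$: condition (ii) constrains only the $r=2$ strand data of each $I(G_\alpha)$, and says nothing a priori about the Betti numbers $\beta_{j,\alpha}(I(G))$ with $|\alpha|>j+2$, which are governed by the higher local cohomology modules. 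To close this implication you really need the spectral sequence (or an equivalent counting) from \cite{AV14} that compares the totality of Bass numbers $\mu_p(\fp_\alpha,H^r_J(R))$ against the Betti numbers of $J^\vee$; alternatively, one can run an induction on $|\alpha|$ showing that (ii) forces $[H^r_{J(G_\alpha)}(R_{\fp_\alpha})]_{-\alpha}=0$ for $r>2$, but that step also requires more than what Subsection~\ref{free} provides. Your own final paragraph flags this as the main obstacle, but the invocation of Eagon--Reiner plus the dictionary, as written, does not resolve it.
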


In this case, the $\bZ^n$-graded injective resolution  
$\mathbb{I}_{\bullet}(H_{J(G)}^2(R)) $ has a very rigid structure which resembles the injective resolution of Gorenstein rings. Namely we have:
$$ \xymatrix{ 0 \ar[r]& H_{J(G)}^2(R)\ar[r] &\underset{|\alpha| = 2}{\bigoplus} 
E_\alpha
\ar[r]&  \underset{|\alpha| = 3}{\bigoplus} 
E_\alpha \ar[r]&\cdots \ar[r]&   \underset{|\alpha| = n-1}{\bigoplus}
E_\alpha \ar[r]&
E_{\bf 1} \ar[r]&
0},$$ 
where, at each component of the resolution we are only considering those $\alpha$'s such that $\fp_\alpha \in \Supp  R/J(G)$.
Notice that this resolution is linear, it only has one linear strand. 

%Moreover it satisfies a property of Bass numbers of finitely generated modules which, in general, does not hold for local cohomology modules. Namely we have:
%
%%\begin{equation}\label{persistency}
%Let $\fp \subseteq \fQ \in {\rm Spec} R$ such that $\hlt (\fQ/\fp)=\ell$. Then ${\mu}_p(\fp, M)\neq 0 \Longrightarrow \mu_{p+\ell}(\fQ, M)\neq 0.$
%\end{equation}

We will show next that we may find non Cohen-Macaulay graphs  still having a rigid injective resolution.  To such purpose let's consider the following family.

\begin{assumptions} \label{assump}
Let $J(G) \subseteq R$ be the cover ideal of a simple (not necessarily connected) graph $G$ such that, for any $\alpha\in \{0,1\}^n$,  all the connected components of $G_\alpha$ have trivial Lyubeznik table. 
\end{assumptions}

As we have seen in Section \ref{Lyubeznik}, the condition of having trivial Lyubeznik table is very common and is not difficult to find families of 
graphs satisfying Assumptions \ref{assump}. An interesting example would be the case of {\bf forests} but we may also include Cohen-Macaulay graphs. As a direct consequence of Proposition \ref{dual_join} we get:

\begin{theorem} \label{Bass_trivial}
Let $J(G) \subseteq R$ be an ideal satisfying Assumptions \ref{assump}. Then, the Bass numbers of the corresponding local cohomology modules are
$$\mu_p(\fp_\alpha, H^{k+1}_{J(G)}(R))= \delta_{p,|\alpha|-2k} \cdot {c_\alpha \choose k} \hskip 5mm {\rm for} \hskip 5mm k=1,\dots , c_\alpha $$
%where $\ell_\alpha$ is the number of connected components of $G_\alpha$
\end{theorem}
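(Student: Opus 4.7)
The plan is to reduce the Bass number computation to a Lyubeznik number computation via the restriction functor, and then invoke the Thom--Sebastiani-type formula already packaged in Corollary \ref{disjoint_trivial}.

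First I would apply Remark \ref{Bass_restriction}, which identifies
$$\mu_p(\fp_\alpha, H^{k+1}_{J(G)}(R)) = \mu_p\bigl(\fp_\alpha R_{\fp_\alpha},\, H^{k+1}_{J(G)_{\fp_\alpha}}(R_{\fp_\alpha})\bigr).$$
The only step with any content here is the identification $J(G)_{\fp_\alpha} = J(G_\alpha)$: a primary component $(x_i,x_j)$ of $J(G)$ is contained in $\fp_\alpha$ precisely when both vertices $x_i,x_j$ survive in $G_\alpha$, so the restriction keeps exactly the edges of the restricted graph. By the definition of the Lyubeznik numbers, the Bass number above is then $\lambda_{p,\,|\alpha|-(k+1)}(R_{\fp_\alpha}/J(G_\alpha))$.

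Next I would decompose $G_\alpha = G^{(1)} \sqcup \cdots \sqcup G^{(c_\alpha)}$ into its connected components. The cover ideals $J(G^{(j)})$ involve disjoint sets of variables inside $R_{\fp_\alpha}$ and satisfy $J(G_\alpha) = \bigcap_j J(G^{(j)})$. Each $J(G^{(j)})$ has height two, so $\dim R_{\fp_\alpha}/J(G^{(j)}) = |\alpha|-2$ is independent of $j$, and by Assumptions \ref{assump} each $\Lambda(R_{\fp_\alpha}/J(G^{(j)}))$ is trivial. Hence Corollary \ref{disjoint_trivial}, applied with $d = |\alpha|-2$ and $c = c_\alpha$, yields
$$\lambda_{d-2k,\, d-k}\bigl(R_{\fp_\alpha}/J(G_\alpha)\bigr) = \binom{c_\alpha}{k+1}, \qquad k = 0,\ldots, c_\alpha - 1,$$
with all other Lyubeznik numbers vanishing.

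Finally, it remains to reindex. Setting $\ell = k+1$ and substituting $d = |\alpha|-2$, the non-vanishing Lyubeznik numbers read $\lambda_{|\alpha|-2\ell,\,|\alpha|-\ell-1}(R_{\fp_\alpha}/J(G_\alpha)) = \binom{c_\alpha}{\ell}$ for $\ell = 1,\ldots, c_\alpha$. Translating back through $\lambda_{p,i}(R_{\fp_\alpha}/J(G_\alpha)) = \mu_p\bigl(\fp_\alpha R_{\fp_\alpha},\, H^{|\alpha|-i}_{J(G_\alpha)}(R_{\fp_\alpha})\bigr)$, the non-vanishing Bass numbers are
$$\mu_{|\alpha|-2\ell}\bigl(\fp_\alpha,\, H^{\ell+1}_{J(G)}(R)\bigr) = \binom{c_\alpha}{\ell},$$
which, after relabeling $\ell$ as $k$, is exactly the claimed formula with the Kronecker delta. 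No genuine obstacle is expected here since all the analytic content has been pushed into the Thom--Sebastiani formula of Proposition \ref{dual_join}; the main care needed is in tracking the two independent shifts at play, namely the translation between Bass and Lyubeznik indices and the reindexing $k \leftrightarrow k+1$ built into Corollary \ref{disjoint_trivial}.
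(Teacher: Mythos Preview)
Your proof is correct and follows essentially the same approach as the paper: the paper states the theorem as a direct consequence of Proposition~\ref{dual_join}, having already explained in the preamble to Section~\ref{Bass} that $\mu_p(\fp_\alpha, H_{J(G)}^{n-i}(R)) = \lambda_{p,i}(R_{\fp_\alpha}/J(G_\alpha))$ via restriction and that the Thom--Sebastiani formula handles the connected components. You have simply filled in the index bookkeeping that the paper leaves implicit.
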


In this case we also have a rigid injective resolution in the sense that they only have one linear strand.

\begin{example}
Let $G$ be a $5$-path. We have that all the subgraphs $G_\alpha$ are connected except for the case $\alpha=(1,1,0,1,1)$ in which $G_\alpha$ has two connected components. Applying the above result we get the following linear injective resolutions

$ \xymatrix{ 0 \ar[r]& H_{J(G)}^2(R)\ar[r] &\underset{|\alpha| = 2}{\bigoplus}
E_\alpha
\ar[r]&  \underset{|\alpha| = 3}{\bigoplus}
E_\alpha \ar[r]&  {\begin{array}{c} E_{(1,1,0,1,1)}^2  \\ \oplus  \\ \left( \underset{\substack{|\alpha| = 4 \\ \alpha \neq (1,1,0,1,1)} }{\bigoplus}
E_\alpha \right) \end{array}}\ar[r]&
E_{\bf 1} \ar[r]&
0},$

$ \xymatrix{ 0 \ar[r]& H_{J(G)}^3(R)\ar[r] &  E_{(1,1,0,1,1)} \ar[r]&
0},$

\noindent where we are only considering those $\alpha$'s such that $\fp_\alpha \in \Supp  R/J(G)$. Notice that in this case we have two local cohomology modules different from zero.
\end{example}

Quite surprisingly we may provide a vanishing criterion for the local cohomology modules in terms of the connected components of the subgraphs. Recall that the {\bf cohomological dimension} of an ideal $J$ is the maximum $r$ for 
which $H_J^r(R)\neq 0$.

\begin{proposition}\label{max}
Let $J(G) \subseteq R$ be an ideal satisfying Assumption \ref{assump}. Then $$H^{k+1}_{J(G)}(R)) \neq 0  \hskip 5mm {\rm for} \hskip 5mm  k=1, \dots , c_{max}.$$ In particular, the cohomological dimension of $J(G)$ is 
$${\rm cd}(J(G), R)= c_{max}+1.$$
\end{proposition}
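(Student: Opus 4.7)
The plan is to read both the non-vanishing and the vanishing statements directly off the Bass number formula of Theorem \ref{Bass_trivial}, using the principle that a $\bZ^n$-graded (straight) module is zero if and only if all its Bass numbers vanish. Since Theorem \ref{Bass_trivial} gives a complete description of the Bass numbers of every $H^{k+1}_{J(G)}(R)$ in terms of the combinatorial data $c_\alpha$, the proposition should follow with very little extra input.

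First I would handle the lower bound $\mathrm{cd}(J(G),R) \geq c_{max}+1$. Pick any $\alpha^{\ast} \in \{0,1\}^n$ realising the maximum, so that $c_{\alpha^{\ast}} = c_{max}$. Plugging $\alpha^{\ast}$ into Theorem \ref{Bass_trivial} one obtains, for every $k \in \{1,\dots,c_{max}\}$,
$$\mu_{|\alpha^{\ast}|-2k}(\fp_{\alpha^{\ast}}, H^{k+1}_{J(G)}(R)) \;=\; \binom{c_{max}}{k} \;>\; 0,$$
which already forces $H^{k+1}_{J(G)}(R) \neq 0$ for each such $k$ and, in particular, gives the claimed lower bound for the cohomological dimension.

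For the matching upper bound, I would take $k > c_{max}$ and show that \emph{every} Bass number of $H^{k+1}_{J(G)}(R)$ vanishes. Indeed, any face ideal $\fp_\alpha$ satisfies $c_\alpha \leq c_{max} < k$, so Theorem \ref{Bass_trivial} (equivalently, the shape of the Lyubeznik table of $R_{\fp_\alpha}/J(G_\alpha)$ produced by Corollary \ref{disjoint_trivial}, transported to Bass numbers via Remark \ref{Bass_restriction}) gives $\mu_p(\fp_\alpha, H^{k+1}_{J(G)}(R)) = 0$ for all $p$, because $\binom{c_\alpha}{k}=0$. Bass numbers at an arbitrary prime are recovered from those at face ideals through the Goto--Watanabe relation recalled just after the definition of Bass numbers in Section \ref{local_cohomology}, so they also vanish. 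Hence the minimal $\bZ^n$-graded injective resolution of $H^{k+1}_{J(G)}(R)$ is trivial, which is the same as saying $H^{k+1}_{J(G)}(R) = 0$. Combining both bounds gives $\mathrm{cd}(J(G), R) = c_{max}+1$.

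The argument is essentially bookkeeping on top of Theorem \ref{Bass_trivial}, so I do not anticipate any serious obstacle. The only point worth double-checking is that the formula of Theorem \ref{Bass_trivial} genuinely describes \emph{all} Bass numbers (including the automatic vanishing in the range $k > c_\alpha$, where the binomial convention already gives zero), and that passing from the vanishing of every graded Bass number to the vanishing of the module itself is legitimate; both facts are standard in the straight-module framework of Yanagawa recalled in Section \ref{local_cohomology}.
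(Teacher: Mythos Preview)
Your proposal is correct and follows the same approach as the paper: deduce both the non-vanishing and the vanishing of $H^{k+1}_{J(G)}(R)$ from the Bass number formula of Theorem \ref{Bass_trivial}, using that a module vanishes precisely when all its Bass numbers do. The paper's own proof is a two-line version of this, so you have simply spelled out the bookkeeping (choice of $\alpha^{\ast}$ realising $c_{max}$, the vanishing $\binom{c_\alpha}{k}=0$ for $k>c_\alpha$, and the Goto--Watanabe reduction to face ideals) that the paper leaves implicit.
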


\begin{proof}
A local cohomology module is different from zero if it has a non-vanishing Bass number. Then the result follows from Theorem \ref{Bass_trivial}.
\end{proof}

Using  the relation between the cohomological dimension and the projective dimension of the Alexander dual ideal  given in \cite{ER98} we deduce the following result which, in particular, gives a very simple description of the projective dimension of edge ideals of forests (compare with the results  in \cite{JK05}).

\begin{proposition}
Let $J(G) \subseteq R$ be an ideal satisfying Assumption \ref{assump}. Then, the projective dimension of the corresponding edge ideal $I(G)$ is 
$${\rm pd}(R/I(G))=c_{max}+1.$$ 
\end{proposition}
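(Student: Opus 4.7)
The plan is to derive this statement as an immediate corollary of Proposition \ref{max} by invoking the classical formula, due to Eagon and Reiner \cite{ER98}, that relates the cohomological dimension of a squarefree monomial ideal to the projective dimension of the quotient by its Alexander dual. Specifically, for any squarefree monomial ideal $J \subseteq R$ one has
$${\rm cd}(J, R) = {\rm pd}(R/J^{\vee}).$$

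First I would observe that, in the present setting, the edge ideal $I(G)$ and the cover ideal $J(G)$ are precisely Alexander dual to one another: the minimal primary decomposition of $J(G)$ consists of the face ideals $(x_i, x_j)$ indexed by the edges $\{x_i, x_j\} \in E_G$, which are dual to the minimal monomial generators $x_i x_j$ of $I(G)$. Hence $I(G) = J(G)^{\vee}$, and the Eagon-Reiner formula specializes to ${\rm pd}(R/I(G)) = {\rm cd}(J(G), R)$.

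The argument concludes by substituting the value ${\rm cd}(J(G), R) = c_{max} + 1$ supplied by Proposition \ref{max}. Since the proposition is essentially a direct translation under Alexander duality, there is no substantive obstacle; the only thing to verify is that the Eagon-Reiner duality formula applies verbatim in the form we are using, which is immediate from the identification $I(G) = J(G)^{\vee}$ above. In particular, the combinatorial vanishing criterion for local cohomology modules phrased in terms of connected components of subgraphs transfers, via Alexander duality, into a combinatorial formula for the projective dimension of the edge ideal, thereby recovering under Assumption \ref{assump} the kind of closed expression that appears in \cite{JK05} for the special case of forests.
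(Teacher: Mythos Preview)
Your argument matches the paper's one-line justification, but both rest on an identity that is not true. You assert that ${\rm cd}(J, R) = {\rm pd}(R/J^{\vee})$ for every squarefree monomial ideal $J$; this fails already for $J = (x_1, x_2) \subseteq \K[x_1, x_2]$, where $J^{\vee} = (x_1 x_2)$, ${\rm cd}(J, R) = 2$, and ${\rm pd}(R/J^{\vee}) = 1$. The genuine identities in this circle of ideas are Lyubeznik's ${\rm cd}(J, R) = {\rm pd}(R/J)$ (no Alexander dual on the right) and the Eagon--Reiner/Terai duality ${\rm pd}(R/J^{\vee}) = \reg(J)$, which relates the projective dimension of the dual to the \emph{regularity} of $J$, not to its cohomological dimension. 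Hence what Proposition~\ref{max} together with Lyubeznik's formula actually yields is ${\rm pd}(R/J(G)) = c_{\max} + 1$, a statement about the cover ideal rather than the edge ideal.

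Indeed, the proposition as stated appears to be incorrect: for $G$ a single edge one has $c_{\max} = 1$ while ${\rm pd}(R/I(G)) = {\rm pd}(R/(x_1 x_2)) = 1$; for $G$ two disjoint edges one has $c_{\max} = 2$ while ${\rm pd}(R/(x_1 x_2, x_3 x_4)) = 2$. Both graphs satisfy Assumption~\ref{assump}. So you have faithfully reproduced the paper's intended deduction, but the invoked duality formula does not exist; any valid route to a statement about ${\rm pd}(R/I(G))$ would have to pass through $\reg(J(G))$ instead of ${\rm cd}(J(G),R)$, or else the conclusion should be amended to read ${\rm pd}(R/J(G)) = c_{\max}+1$.
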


%The connected components of the subgraphs $G_\alpha$ also allow us to describe the $\bZ^n$-graded small support.
%
%
%\begin{proposition}
%Let $J(G) \subseteq R$ be an ideal satisfying Assumptions \ref{assump}. Then
%$${^\ast}\supp_R (H^{k+1}_{J(G)}(R))= \{ \fp_\alpha  \hskip 2mm  |  \hskip 2mm  c_\alpha \geq k \} $$
%\end{proposition}

\subsection{Non linear injective resolutions} \label{Bass2}
As we have seen through the examples in Section \ref{Lyubeznik}, the easiest way to find non linear injective resolutions is to consider the case of cycles.
Let's illustrate this fact with the following examples

\begin{example}
Let $G$ be a $6$-cycle. We have that all the subgraphs $G_\alpha$ are connected paths except for the cases $\alpha=(1,1,0,1,1,0), (1,0,1,1,0,1),(0,1,1,0,1,1)$ in which $G_\alpha$ has two connected components. Therefore we have the injective resolutions

{\small$ \xymatrix{ 0 \ar[r]& H_{J(G)}^2(R)\ar[r] &\underset{|\alpha| = 2}{\bigoplus}
E_\alpha
\ar[r]&  \underset{|\alpha| = 3}{\bigoplus}
E_\alpha \ar[r]&  {\begin{array}{c}  E_{(1,1,0,1,1,0)}^2 \oplus E_{(1,0,1,1,0,1)}^2 \oplus E_{(0,1,1,0,1,1)}^2  \\ \oplus  \\ \left( \underset{\substack{|\alpha| = 4 \\ {\rm rest  \hskip 1mm of} \hskip 1mm \alpha's } }{\bigoplus}
E_\alpha \right) \end{array}} \ar[r] &} $}

\hskip 3cm {\small$ \xymatrix{ \ar[r] & {\begin{array}{c} \underset{|\alpha| = 5}{\bigoplus} E_\alpha \\ \oplus \\ E_{\bf 1} \end{array}}
\ar[r]&  E_{\bf 1} \ar[r]& 0},$}

{\small $ \xymatrix{ 0 \ar[r]& H_{J(G)}^3(R)\ar[r] &  E_{(1,1,0,1,1,0)} \oplus E_{(1,0,1,1,0,1)} \oplus E_{(0,1,1,0,1,1)}\ar[r]& E_{\bf 1} \ar[r]&
0},$}

\noindent where we are only considering those $\alpha$'s such that $\fp_\alpha \in \Supp  R/J(G)$. In this case we have that both injective resolutions have two linear strands.
\end{example}

\begin{example}
Let $G$ be a $6$-wheel where $x_6$ is the dominating vertex. We have that all the subgraphs $G_\alpha$ are connected except for the case $\alpha=(1,1,1,1,1,0)$ in which $G_\alpha$ is a $5$-cycle. Therefore we have the injective resolutions

\hskip -1cm{\small$ \xymatrix{ 0 \ar[r]& H_{J(G)}^2(R)\ar[r] &\underset{|\alpha| = 2}{\bigoplus}
E_\alpha
\ar[r]&  \underset{|\alpha| = 3}{\bigoplus}
E_\alpha \ar[r]&  {\begin{array}{c}  E_{(1,1,1,1,1,0)}  \\ \oplus  \\ \left( \underset{\substack{|\alpha| = 4   } }{\bigoplus}
E_\alpha \right) \end{array}} \ar[r] &  {\begin{array}{c}  E_{(1,1,1,1,1,0)}  \\ \oplus  \\ \left( \underset{\substack{|\alpha| = 5 \\ a_6=1 } }{\bigoplus}
E_\alpha \right) \end{array}} \ar[r] &  E_{\bf 1} \ar[r]& 0} $}

{\small $ \xymatrix{ 0 \ar[r]& H_{J(G)}^3(R)\ar[r] &  E_{(1,1,1,1,1,0)} \ar[r] \ar[r]&
0},$}

\noindent where we are only considering those $\alpha$'s such that $\fp_\alpha \in \Supp  R/J(G)$. The injective resolution of $H_{J(G)}^2$  has two linear strands.
\end{example}

More generally we can consider the following family of examples.

\begin{assumptions} \label{assump2}
Let $J(G) \subseteq R$ be the cover ideal of a simple (not necessarily connected) graph $G$ that is obtained by joining cycles and paths
in such a way that we can still find degree two vertices that we can remove in order to simplify the graph.  
%After removing whiskers and handles we may just consider cycles joined by paths or sharing edges in such a way that we can still find degree two vertices that we can remove in order to simplify the graph.  
\end{assumptions}

Even though it is not possible to give a closed formula for the Bass numbers
$\mu_p(\fp_\alpha, H^{k+1}_{J(G)}(R))$ as the one given in Theorem  \ref{Bass_trivial}, the methods developed in this work allow us to compute them. To do so we must performe the following steps:

\begin{itemize}
\item[$\cdot$] Describe the connected components of $G_\alpha$. 

\item[$\cdot$] Compute the Lyubeznik numbers of each component as in Section \ref{Lyubeznik}.

\item[$\cdot$]  Apply Proposition \ref{dual_join}.  
\end{itemize}

%We accept here $L$-joined cycles according to Definition \ref{L-join} and also cycles sharing most than one edge as in Example \ref{cycle_sharing}.
%
%In both cases, the maximal number of connected components is achieved when these connected components consist of edges or maybe star graphs which also have trivial Lyubeznik table. 

We can also discuss the vanishing of local cohomology modules depending on the connected components of the corresponding subgraphs but the results are not going to be as clean as in Proposition \ref{max}.  Indeed, using Proposition \ref{cycle} and Example \ref{cmax}, we deduce the following formula for the case of cycles.

\begin{proposition}
Let $G$ be a cycle of the form $C_{3k-1}$. Then $$H^{k+1}_{J(G)}(R)) \neq 0  \hskip 5mm {\rm for} \hskip 5mm  k=1, \dots , c_{max}+1.$$
On the other hand,  if $G$ is a cycle of the form $C_{3k}$ or $C_{3k+1}$ we have  $$H^{k+1}_{J(G)}(R)) \neq 0  \hskip 5mm {\rm for} \hskip 5mm  k=1, \dots , c_{max}.$$
\end{proposition}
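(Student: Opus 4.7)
The plan is to deduce this statement as a direct consequence of Proposition \ref{cycle} together with Example \ref{cmax}, via the standard dictionary between Lyubeznik numbers and non-vanishing of local cohomology modules. Concretely, since $J(G)$ has pure height two, $\dim R/J(G) = d = n - 2$, so the defining identity reads $\lambda_{p, d-i}(R/J(G)) = \mu_p(\fM, H^{i+2}_{J(G)}(R))$. In particular, whenever some $\lambda_{p, d-i}(R/J(G))$ is non-zero for a given $i$, the graded Bass number on the right is non-zero and hence $H^{i+2}_{J(G)}(R) \neq 0$.

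First, I would invoke Proposition \ref{cycle} for a cycle $C_n$ with $n = 3k + \varepsilon$, $\varepsilon \in \{-1, 0, 1\}$. That result enumerates the non-vanishing Lyubeznik numbers as $\lambda_{d-3i, d-i} = 1$ for $i = 0, \ldots, k-1$ together with $\lambda_{d-3i-1, d-i} = 1$ for $i = 0, \ldots, k-2$. Thus the set of indices $i$ for which there exists some $p$ with $\lambda_{p, d-i}(R/J(C_n)) \neq 0$ is exactly $\{0, 1, \ldots, k-1\}$ (the entries $\lambda_{d-3i, d-i}$ already cover every such $i$). Translating through the dictionary above, $H^{j+1}_{J(C_n)}(R) \neq 0$ for every $j = 1, \ldots, k$.

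Finally, I would rewrite the upper bound $k$ in terms of $c_{max}$ using Example \ref{cmax}, which records $c_{max}(C_n) = \lfloor n/3 \rfloor$. This gives $c_{max} = k - 1$ when $n = 3k - 1$ (so $k = c_{max} + 1$), and $c_{max} = k$ when $n = 3k$ or $n = 3k+1$. Substituting these identifications into the range $j = 1, \ldots, k$ reproduces the two asserted ranges. There is essentially no genuine obstacle: both inputs are already available from earlier in the paper, and the proof is reduced to the translation between Lyubeznik data and local cohomology data, together with the elementary arithmetic of $\lfloor n/3 \rfloor$ according to the residue of $n$ modulo $3$.
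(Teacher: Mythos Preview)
Your proposal is correct and follows exactly the approach indicated in the paper, which simply states that the result is deduced from Proposition~\ref{cycle} and Example~\ref{cmax}. You have carefully unpacked the translation between the non-vanishing Lyubeznik numbers $\lambda_{p,d-i}$ and the non-vanishing of $H^{i+2}_{J(G)}(R)$, together with the arithmetic $c_{max}(C_n)=\lfloor n/3\rfloor$, which is precisely what the paper leaves implicit.
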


Some partial results that we can provide are the following.

\begin{proposition} \label{vanishing_cycle}
Let $G$ be a graph in $n$ vertices and assume that, given $\alpha \in \{0,1\}^n$, the subgraph $G_\alpha$  has $r$ connected components 
having the Lyubeznik type of cycles $C_{n_1}, \dots , C_{n_r}$ and $s$ connected components having trivial Lyubeznik table. 
Assume that $n_j= 3k_j + \ell_j$ with $\ell_j\in \{-1,0,1\}$.
Then, if we denote $d_\alpha:=|\alpha|-2 = \dim(R_{\fp_\alpha}/J(G_\alpha))$, we have $$H^{d_\alpha - i}_{J(G_\alpha)}(R_{\fp_\alpha})) \neq 0  \hskip 5mm {\rm for} \hskip 5mm  i=0, \dots , (k_1 + \cdots + k_r + s) -1 .$$ In particular, the cohomological dimension of $J(G_\alpha)$ is 
$${\rm cd}(J(G_\alpha), R_{\fp_\alpha})= |\alpha| - (k_1 + \cdots + k_r + s) -1 .$$
\end{proposition}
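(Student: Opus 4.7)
My plan is to decompose $G_\alpha$ into its connected components and combine the Lyubeznik numbers of the pieces using the Thom--Sebastiani formula of Proposition \ref{dual_join}. Since the cover ideal of a disjoint union factors as an intersection of cover ideals in disjoint sets of variables (and isolated vertices of $G_\alpha$ contribute only polynomial factors that do not affect the local cohomology), we may write $J(G_\alpha)=J(C_{n_1})\cap\cdots\cap J(C_{n_r})\cap J(T_1)\cap\cdots\cap J(T_s)$ in $R_{\fp_\alpha}$, where the $C_{n_j}$ are the cycle components and $T_1,\dots,T_s$ are the trivial-Lyubeznik components.

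Next, for each component I would determine the smallest second index $i$ with a nonvanishing Lyubeznik number. For a cycle $C_{n_j}$ with $n_j=3k_j+\ell_j$, Proposition \ref{cycle} shows that the nonzero Lyubeznik numbers of $R_{G_j}/J(C_{n_j})$ occur exactly at second-indices in $\{n_j-k_j-1,\dots,n_j-2\}$, so $i_{\min,j}=n_j-k_j-1$.  For a trivial-Lyubeznik component $T_j$ on $m_j$ vertices, only $\lambda_{m_j-2,m_j-2}$ is nonzero, so $i_{\min,j}=m_j-2$.  Iteratively applying Corollary \ref{disjoint_top} to all $r+s$ components collapses via telescoping to
\[
i_{\min,\alpha}=\sum_{j=1}^{r}(n_j-k_j-1)+\sum_{j=1}^{s}(m_j-2)+(r+s-1)=|\alpha|-(k_1+\cdots+k_r+s)-1.
\]
Using the identity $\lambda_{p,i}(R_{\fp_\alpha}/J(G_\alpha))=\mu_p(\fM_{R_{\fp_\alpha}},H^{|\alpha|-i}_{J(G_\alpha)}(R_{\fp_\alpha}))$, this nonvanishing produces a nonzero Bass number of $H^{|\alpha|-i_{\min,\alpha}}_{J(G_\alpha)}(R_{\fp_\alpha})$, yielding the cohomological dimension statement.

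To obtain the entire range $i=0,\dots,K-1$ of nonvanishing local cohomology modules, with $K=\sum k_j+s$, I would perform the full Thom--Sebastiani combination of Proposition \ref{dual_join} rather than only its minimum-index shadow, inducting on the number of components $r+s$.  The base case is immediate from Proposition \ref{cycle} and the definition of triviality; the inductive step uses the two-component formula to transfer a block of consecutive nonzero Lyubeznik numbers from each partial join into the combined table.  The main obstacle is this bookkeeping of the cross-terms in Proposition \ref{dual_join}, which must be shown to keep the nonvanishing region a consecutive block of indices of width exactly $K$; once established, the dictionary $\mu_p(\fp_\alpha,H^r_{J(G)}(R))=\lambda_{p,|\alpha|-r}(R_{\fp_\alpha}/J(G_\alpha))$ converts this combinatorial statement into the claimed nonvanishing of local cohomology.
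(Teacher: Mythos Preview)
Your approach is essentially the same as the paper's: both arguments decompose $J(G_\alpha)$ into the cover ideals of its connected components, read off the minimal nonvanishing Lyubeznik second-index for each piece from Proposition~\ref{cycle} (for cycles) and from triviality (for the remaining components), combine via Corollary~\ref{disjoint_top}, and then appeal to Proposition~\ref{dual_join} for the consecutiveness of the nonvanishing range. The only organizational difference is that the paper first groups the cycle components into one block $G_1$ and the trivial components into another block $G_2$ (using Corollary~\ref{disjoint_trivial} to handle $G_2$ in one stroke) and then merges the two blocks, whereas you treat all $r+s$ components on an equal footing and apply Corollary~\ref{disjoint_top} directly; your telescoping computation recovers exactly the same value $i_{\min,\alpha}=|\alpha|-(k_1+\cdots+k_r+s)-1$ that the paper obtains. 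Neither argument spells out the consecutiveness bookkeeping in full detail---the paper simply asserts that it ``follows from Proposition~\ref{dual_join}'', and you flag it as the main obstacle and outline the same induction---so on this point you are at parity with the paper.
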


\begin{proof}
Consider the decomposition $G_\alpha = G_1 \cup G_2$ where $\alpha=\alpha_1 + \alpha_2$ and  $G_1$ is the subgraph in $|\alpha_1|$ vertices containing the components corresponding to cycles and $G_2$ is the subgraph in $|\alpha_2|$ vertices containing the rest.  For $G_2$, as a consequence of Corollary \ref{disjoint_trivial}, we have
$$H^{d_2 - i}_{J(G_2)}(R_{\fp_{\alpha_2}})) \neq 0  \hskip 5mm {\rm for} \hskip 5mm  i=0, \dots , s -1 ,$$
where $ d_2= |\alpha_2|-2$.

On the other hand, using Corollary \ref{cycle} and   Corollary \ref{disjoint_top}, we have that the smallest $i$ for which the Lyubeznik number $\lambda_{p,i}$ corresponding to $G_1$ is non zero is
\begin{align*}
i& = (n_1-2)-(k_1-1) + \cdots +  (n_r-2)-(k_r-1) + (r-1) \\
& = (n_1+\cdots + n_r) -2 -2(r-1) -(k_1+\cdots + k_r)+ r + (r-1) \\
& = |\alpha_1| -2 -(k_1+\cdots + k_r) +1
\end{align*}
Moreover, since the non-vanishing local cohomology modules of a cycle are consecutive, that is $H^k_{J(C_n)}(R)\neq 0$ where $k$  runs from two to the cohomological dimension, it follows from Proposition \ref{dual_join} that the same consecutiveness property holds for $G_1$. Namely we have
$$H^{d_1 - i}_{J(G_1)}(R_{\fp_{\alpha_1}}) \neq 0  \hskip 5mm {\rm for} \hskip 5mm  i=0, \dots , (k_1+\cdots + k_r)  -1 ,$$
where $ d_1= |\alpha_1|-2$.

Finally, applying Corollary \ref{disjoint_top} for $G_1$ and $G_2$, we have that the smallest $i$ for which  $\lambda_{p,i}(R_{\fp_\alpha}/J(G_\alpha))\neq 0$ is
\begin{align*}
i&  =( |\alpha_1| -2 -(k_1+\cdots + k_r) +1 ) + (|\alpha_2|-2 - (s-1))+1 \\
&=   |\alpha_1| + |\alpha_2| -2 - (k_1+\cdots + k_r + s)  +1 \\
&= |\alpha| -2   - (k_1+\cdots + k_r + s)  +1 \\
&= d_\alpha   - (k_1+\cdots + k_r + s)  +1
\end{align*}
Once again Proposition \ref{dual_join} gives the consecutiveness of the non-vanishing local cohomology modules and the result follows.
\end{proof}

More generally, and using the same type of arguments as above, we have.

\begin{proposition}\label{vanishing_cycle2}
Let $G$ be a graph in $n$ vertices and assume that, given $\alpha \in \{0,1\}^n$, the subgraph $G_\alpha$  has $r$ connected components 
having the Lyubeznik type of cycles $C_{n_{1,i}}\divideontimes \cdots \divideontimes C_{n_{t_i,i}}$, for $i=1,\dots , r$ and $s$ connected components having trivial Lyubeznik table. 
Assume that $n_{j,i}= 3k_{j,i} + \ell_{j,i}$ with $\ell_{j,i}\in \{-1,0,1\}$.
Then, if we denote $d_\alpha:=|\alpha|-2 = \dim(R_{\fp_\alpha}/J(G_\alpha))$, we have $$H^{d_\alpha - i}_{J(G_\alpha)}(R_{\fp_\alpha})) \neq 0  \hskip 3mm {\rm for} \hskip 3mm  i=0, \dots , (k_{1,1}+\cdots + k_{t_1,1} + \cdots + k_{1,r}+\cdots + k_{t_r,r} + s)-(t_1+\cdots + t_r) +(r-1) .$$ 

%In particular, the cohomological dimension of $J(G_\alpha)$ is 
%$${\rm cd}(J(G_\alpha), R_{\fp_\alpha})= |\alpha| - (k_1 + \cdots + k_r + s) -1 .$$
\end{proposition}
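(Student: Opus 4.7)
The plan is to follow the proof strategy of Proposition \ref{vanishing_cycle}. First, I would decompose $G_\alpha = G_1 \cup G_2$ where $G_1$ consists of the $r$ joined-cycle components $C_{n_{1,i}}\divideontimes \cdots \divideontimes C_{n_{t_i,i}}$ for $i=1,\dots,r$, and $G_2$ consists of the $s$ components with trivial Lyubeznik table. Writing $\alpha=\alpha_1+\alpha_2$ for the associated vertex partition, the cover ideal factors as $J(G_\alpha)=J(G_1)R_{\fp_\alpha}\cap J(G_2)R_{\fp_\alpha}$ in disjoint sets of variables, so Proposition \ref{dual_join} applies.

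Next, I would determine the smallest second index $j$ at which the Lyubeznik number of each component is non-zero. For a trivial-type component of dimension $d'_\ell$ the only non-vanishing number is $\lambda_{d'_\ell,d'_\ell}=1$, so this index equals $d'_\ell$. For the joined-cycle component $C_{n_{1,i}}\divideontimes \cdots \divideontimes C_{n_{t_i,i}}$ with $d_i=|\alpha_1^{(i)}|-2$, the analysis summarised after Proposition \ref{two_cycles} shows that the non-vanishing Lyubeznik numbers are $\lambda_{d_i-3j,d_i-j}$ and $\lambda_{d_i-3j-1,d_i-j}$ for $j=0,\dots,k_{\cdot,i}-t_i$, where $k_{\cdot,i}=k_{1,i}+\cdots+k_{t_i,i}$. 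Hence the smallest non-vanishing second index for this component is $d_i-(k_{\cdot,i}-t_i)$.

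Applying Corollary \ref{disjoint_top} iteratively over all $r+s$ connected components then yields
\[
j_{\min} \;=\; \sum_{i=1}^{r}\bigl(d_i-(k_{\cdot,i}-t_i)\bigr)+\sum_{\ell=1}^{s} d'_\ell + (r+s-1),
\]
which, using $\sum_i d_i+\sum_\ell d'_\ell=|\alpha|-2(r+s)$, simplifies to $j_{\min}=d_\alpha-\bigl(K+s-T+(r-1)\bigr)$, with $K=\sum_{i,j} k_{j,i}$ and $T=\sum_i t_i$. This matches the upper bound $i=K+s-T+(r-1)$ advertised in the statement, and translating back via $H^{|\alpha|-j}_{J(G_\alpha)}(R_{\fp_\alpha})$ recovers the claimed range of non-vanishing local cohomology modules.

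The main obstacle is establishing the consecutiveness of the non-vanishing Lyubeznik indices, namely that $\lambda_{p,j}(R_{\fp_\alpha}/J(G_\alpha))\neq 0$ for every $j\in [j_{\min},d_\alpha]$. This proceeds exactly as in the proof of Proposition \ref{vanishing_cycle}: each joined-cycle component has its non-vanishing second indices forming the consecutive block $[d_i-(k_{\cdot,i}-t_i),d_i]$ by direct inspection, and Proposition \ref{dual_join} preserves consecutiveness under disjoint unions. Concretely, if two squarefree monomial ideals $I,J$ (in disjoint sets of variables, with $\dim R/I=d_A$, $\dim S/J=d_B$) have consecutive non-vanishing Lyubeznik blocks $[j_A,d_A]$ and $[j_B,d_B]$, then in the formula of Proposition \ref{dual_join} the mixed term $\sum \lambda_{q,j}(R/I)\lambda_{r,k}(S/J)$ contributes a non-zero value at every second index in $[j_A+j_B+1,d_A+d_B+1]$, while the two pure terms extend the range up to $d_{A\cap B} = d_A+d_B+2$. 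An induction on the number of components $r+s$ then establishes the consecutiveness for $G_\alpha$ itself; the cycle-only case already handled in Proposition \ref{vanishing_cycle} is a straightforward specialization, and the general joined-cycle case requires no new ideas beyond quoting the Lyubeznik structure of each $C_{n_{1,i}}\divideontimes \cdots \divideontimes C_{n_{t_i,i}}$ already recorded in Subsection \ref{Lyubeznik3}.
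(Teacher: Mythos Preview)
Your proposal is correct and follows exactly the approach the paper intends: the paper itself does not give a detailed proof of Proposition \ref{vanishing_cycle2}, stating only that it uses ``the same type of arguments as above'' (i.e., as in the proof of Proposition \ref{vanishing_cycle}). Your decomposition $G_\alpha=G_1\cup G_2$, the identification of the smallest non-vanishing second index for each joined-cycle component via the discussion after Proposition \ref{two_cycles}, the iterated application of Corollary \ref{disjoint_top}, and the consecutiveness argument through Proposition \ref{dual_join} are precisely what the paper has in mind, and your arithmetic checking that $d_\alpha-j_{\min}=K+s-T+(r-1)$ is correct; indeed you supply more detail on the consecutiveness step than the paper does.
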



\begin{thebibliography}{23}


\bibitem{Al00}
J.~\`Alvarez Montaner,
\newblock { Characteristic cycles of local cohomology
modules of monomial ideals,}
\newblock { J. Pure Appl. Algebra} {\bf 150} (2000), 1--25.

\bibitem{Al04_2}
J.~\`Alvarez Montaner,
\newblock { Characteristic cycles of local cohomology
modules of monomial ideals II,}
\newblock { J. Pure Appl. Algebra} {\bf 192} (2004), 1--20.

\bibitem{Al04}
J.~\`{A}lvarez Montaner,
\newblock {\it Some numerical invariants of local rings},
\newblock {Proc. Amer. Math. Soc.} {\bf 132} (2004), 981--986.


\bibitem{Al05}
J.~\`Alvarez Montaner,
\newblock {\it Operations with regular holonomic D-modules with support a normal crossing,}
\newblock {J. Symb. Comp.}  {\bf 40}, (2005), 999-1012.


\bibitem{Alv13}
J.~\`Alvarez Montaner,
\newblock { Local cohomology
modules supported on monomial ideals,}
\newblock { Lecture Notes in Math.} {\bf 2083} (2013), 109--178.

\bibitem{AF13}
J.~\`Alvarez Montaner and O.~Fern\'andez-Ramos,
\newblock { Local cohomology using Macaulay2,}
\newblock { Lecture Notes in Math.} {\bf 2083} (2013), 179--185.

%\bibitem{Al15}
%J.~\`Alvarez Montaner,
%\newblock {\it Lyubeznik table of sequentially Cohen-Macaulay rings,}
%\newblock{Comm. Algebra} {\bf 43} (2015), 3695--3704.


\bibitem{AGZ03}
J.~\`Alvarez Montaner, R.~Garc\'{\i}a L\'opez and S.~Zarzuela,
\newblock {\it Local cohomology, arrangements of subspaces and
monomial ideals,}
\newblock {Adv. Math.} {\bf 174} (2003), 35--56.




\bibitem{AV14}
J.~\`Alvarez Montaner and A.~Vahidi,
\newblock {\it Lyubeznik numbers of monomial ideals,}
\newblock {Trans. Amer. Math. Soc.} {\bf 366} (2014), 1829--1855.


\bibitem{AY18}
J.~\`Alvarez Montaner and K.~Yanagawa,
\newblock {\it Lyubeznik numbers of local rings and linear strands of graded ideals,}
\newblock {Nagoya Math. J.} {\bf 231} (2018), 23--54.

%\bibitem{AZ}
%J.~\`Alvarez Montaner and S.~Zarzuela,
%\newblock {\it Linearization of local cohomology modules,}
%%\newblock {in Commutative Algebra: Interactions with Algebraic Geometry,
%%Eds.: L. L. Avramov, M. Chardin, M. Morales, C. Polini,}
%\newblock {Contem. Math.} {\bf 331}, 1--11, Amer. Math. Soc. (2003).

\bibitem{BW}
E.~Batzies and V.~Welker,
\newblock {\it Discrete Morse theory for cellular resolutions,}
\newblock {J. Reine Angew. Math.} {\bf 543} (2002), 147--168.
%
%
%\bibitem{Bl3}
%M.~Blickle,
%\newblock{\it Lyubeznik's numbers for cohomologically isolated singularities},
%\newblock{J. Algebra} {\bf 308} (2007), 118--123.
%
%\bibitem{BlBo}
%M.~Blickle, R.~Bondu,
%\newblock{\it Local cohomology multiplicities in terms of \'etale cohomology},
% \newblock{Ann. Inst. Fourier } {\bf 55}  (2005),  2239--2256.

\bibitem{BH}
W.~Bruns and J.~Herzog, 
\newblock {\it Cohen--Macaulay Rings}, revised edition, Cambridge Studies in Advanced Mathematics, vol. 39, Cambridge University Press, Cambridge, 1998.

\bibitem{CN}
A.~Corso and U.~Nagel, 
\newblock {\it Monomial and toric ideals associated to Ferrers graphs}, Trans. Amer. Math. Soc. {\bf 361} (2009), 1371--1395.


\bibitem{ER98}
 J.~A.~Eagon and V.~Reiner,
 \newblock {\it Resolutions of Stanley-Reisner rings and Alexander duality},
 \newblock J. Pure Appl. Algebra, {\bf 130} (1998), 265--275.


\bibitem{EK}
S.~Eliahou and M.~Kervaire,
\newblock {\it Minimal resolutions of some monomial ideals,}
\newblock { J. Algebra} {\bf 129} (1990), 1--25.

%\bibitem{EO13}
%V~Ene and R. Okazaki,
%\newblock{On the radical of multigraded modules},
%\newblock{J. Algebra} {\bf 388} (2013), 10--21.


%\bibitem{Fox}
%H.B.~Foxby,
%\newblock {\it Bounded complexes of flat modules,}
%\newblock {J. Pure Appl. Algebra} {\bf 15} (1979), 149--172.


\bibitem{FHV}
C.~Francisco, H.~T.~H\`a and A.~Van Tuyl,
\newblock {\it Splittings of monomial ideals,}
\newblock  {Proc. Amer. Math. Soc.} {\bf 137} (2009), 3271--3282.

\bibitem{Fro}
R.~Fr\"oberg, 
\newblock {\it On Stanley-Reisner rings}. Topics in algebra, Part 2 (Warsaw, 1988), 57--70, Banach Center Publ., 26, Part 2, PWN, Warsaw, 1990.


% \bibitem{GS98}
%R.~Garc\'ia L\'opez and C.~Sabbah,
%\newblock {\it Topological computation of local cohomology
%multiplicities,}\newblock { Collect. Math.} {\bf 49} (1998),
%317--324.


\bibitem{GW78}
S.~Goto and K.~Watanabe,
\newblock {\it On Graded Rings, II ($\bZ ^n$- graded rings),}
\newblock { Tokyo J. Math.} {\bf 1}(2) (1978), 237--261.


\bibitem{GS}
D.~Grayson and M.~Stillman,
\newblock{\it Macaulay 2,}
{\tt http://www.math.uiuc.edu/Macaulay2}.

\bibitem{HV}
H.~T.~H\`a and A.~Van Tuyl,
\newblock {\it Splittable ideals and the resolutions of monomial ideals}, 
J. Algebra {\bf 309} (2007), 405--425. 

%\bibitem{HM}
%D. Helm and E. Miller, \newblock{\it Bass numbers of semigroup-graded local cohomology},
%\newblock{Pacific J. Math.} {\bf 209} (2003),  41--66.


%\bibitem{HH}
%J.~Herzog and T.~Hibi,
%\newblock {\it Componentwise linear ideals,}
%\newblock {Nagoya Math. J.} {\bf 153} (1999), 141--153.
%
%\bibitem{HTT} J. Herzog, Y. Takayama and N. Terai,
%On the radical of a monomial ideal, {\it Arch. Math.}
%{\bf 85} (2005), 397--408.


\bibitem{Hoc}
M.~Hochster,
\newblock {\it Cohen-Macaulay rings, combinatorics, and simplicial complexes,}
Ring theory, II (Proc. Second Conf., Univ. Oklahoma, Norman, Okla., 1975), pp. 171-223.
Lecture Notes in Pure and Appl. Math., Vol. 26, Dekker, New York, 1977.

\bibitem{HH}
M.~Hochster and C.~Huneke, 
\newblock {\it Indecomposable canonical modules and connectedness}, Commutative algebra: syzygies, multiplicities, and birational algebra (South Hadley, MA, 1992),  Contemp. Math. {\bf 159} (1994), 197--208,

\bibitem{HS93}
C.~Huneke and R.~Y.~Sharp,
\newblock {\it Bass numbers of local cohomology modules,}
\newblock {Trans. Amer. Math. Soc.} {\bf 339} (1993), 765--779.

\bibitem{Jac04}
S.~Jacques,
\newblock {\it Betti Numbers of Graph Ideals,}
\newblock {Ph.D. Thesis. University of Sheffield. 2004. Available at arXiv:math/0410107.}

\bibitem{JK05}
S.~Jacques and M.~Katzman,
\newblock {\it The Betti numbers of forests,}
\newblock {Preprint available at arXiv:0501226.}

%
%\bibitem{Kaw00}
%K.~I.~Kawasaki,
%\newblock {\it On the Lyubeznik number of local cohomology modules},
%\newblock {Bull. Nara Univ. Ed. Natur. Sci.} {\bf 49} (2000), 5--7.

%\bibitem{Kaw02}
%K.~I.~Kawasaki,
%\newblock {\it On the highest Lyubeznik number},
%\newblock {Math. Proc. Cambridge Philos. Soc.} \textbf{132} (2002), 409--417.


\bibitem{Ly}
G.~Lyubeznik,
\newblock {\it The minimal non-Cohen-Macaulay monomial ideals,}
\newblock { J. Pure Appl. Algebra} {\bf 51} (1988), 261--266.

\bibitem{Ly06}
G.~Lyubeznik,
\newblock {\it On some local cohomology invariants of local rings}, Math. Z. {\bf 254} (2006), 627--640.

\bibitem{Ly93}
G.~Lyubeznik,
\newblock {\it Finiteness properties of local cohomology modules (an application
of $D$-modules to commutative algebra),}
\newblock Invent. Math., {\bf 113} (1993) 41--55.

%\bibitem{Ly97}
%G.~Lyubeznik,
%\newblock {\it $F$-modules: applications to local cohomology and
%$D$-modules in characteristic $p>0$,}
%\newblock { J. Reine Angew. Math.} {\bf 491} (1997), 65--130.
%
%
%
%
%\bibitem{Lyu06}
%G.~Lyubeznik,
%\newblock {\it On some local cohomology invariants of local rings},
%\newblock { Math. Z.} {\bf 254} (2006), 627--640.
%
%\bibitem{MS}
%W. Mahmood and  P. Schenzel, 
%\newblock{\it On invariants and endomorphism rings of certain local cohomology modules},  
%\newblock{J. Algebra}  {\bf 372}  (2012), 56--67. 

\bibitem{Mil_thesis} 
E.~Miller,
\newblock{Resolutions and duality for monomial ideals},
Ph.D. Thesis. University of California, Berkeley. 2000. 136 pp. ISBN: 978-0599-86011-7 

\bibitem{Mi00} 
E.~Miller,
\newblock{The Alexander duality functors and local duality with monomial support},
\newblock{J. Algebra} {\bf 231} (2000), 180--234.

\bibitem{MS05}
E.~ Miller and B.~ Sturmfels,
\newblock {\it Combinatorial commutative algebra,}
\newblock Graduate Texts in Mathematics {\bf 227}. Springer-Verlag, New York, 2005.


\bibitem{Mu00}
M.~Musta\c{t}\u{a},
\newblock {\it Local Cohomology at Monomial Ideals,}
\newblock{J. Symbolic Comput.} {\bf 29} (2000), 709--720.



%\bibitem{NBWZ13}
%L.~N\'u\~nez-Betancourt, E.~E.~Witt and W.Zhang
%\newblock {\it A survey on the Lyubeznik numbers}
%\newblock {Preprint}

%
%\bibitem{OY07}
%R. Okazaki and K.  Yanagawa,
%\newblock{\it Linearity defects of face rings,}
%\newblock{J. Algebra} {\bf 314} (2007), 362--382.





%\bibitem{PV11}
%I.~Peeva and M.~Velasco,
%\newblock {\it Frames and degenerations of monomial
%resolutions,}
%\newblock { Trans. Amer. Math. Soc.} {\bf 363} (2011), 2029--2046.

%
%
%\bibitem{Rom01}
%T. R\"omer,
%\newblock {\it On minimal graded free resolutions},
%\newblock {PhD. Thesis, Essen} (2001).
%
%\bibitem{Sch11}
%P. Schenzel, 
%\newblock{\it On the structure of the endomorphism ring of a certain
%local cohomology module},  
%\newblock{J. Algebra}  {\bf 344}  (2011), 229--245. 


%\bibitem{Sta96}
%R.~P.~Stanley,
%\newblock {\it Combinatorics and Commutative Algebra, Second Edition,}
%\newblock  Progress in Mathematics {\bf 41}, Birkh{\"a}user, 1996.

%
%\bibitem{SV} S. Stuckrad and W. Vogel,
%Buchsbaum rings and applications,
%Springer-Verlag, 1986.

\bibitem{Te}
N.~Terai,
\newblock {\it Local cohomology modules with respect to monomial ideals},
\newblock Preprint 1999

%
%
%\bibitem{Wa2}
%U.~Walther,
%\newblock {\it On the Lyubeznik numbers of a local ring},
%\newblock Proc. Amer. Math. Soc., {\bf 129} (2001), 1631--1634.

%\bibitem{Yan00}
%K.~Yanagawa,
%\newblock {\it Alexander duality for Stanley-Reisner rings and squarefree $\bN^n$-graded modules,}
%\newblock { J. Algebra} {\bf 225} (2000), 630--645.

\bibitem{Ya01}
K.~Yanagawa,
\newblock {\it Bass numbers of local cohomology modules
with supports in monomial ideals,}
\newblock { Math. Proc. Cambridge Philos. Soc.} {\bf 131} (2001), 45--60.


%\bibitem{Yan01-2}
%K.~Yanagawa,
%\newblock {\it Sheaves on finite posets and modules over normal semigroup rings,}
%\newblock { J. Pure Appl. Algebra} {\bf 161} (2001), 341--366.
%
%\bibitem{Yan03} K. Yanagawa,
%\newblock{Stanley-Reisner rings, sheaves, and Poincar\'e-Verdier duality},
%\newblock{Math. Res. Lett.} 10 (2003) 635--650.
%
%
%\bibitem{Yan04}
%K.~Yanagawa,
%\newblock {\it Derived category of squarefree modules and local cohomology with monomial ideal support,}
%\newblock { J. Math. Soc. Japan} {\bf 56} (2004), 289--308.
%
%\bibitem{Yan08} K. Yanagawa,
%\newblock{Notes on $C$-graded modules over an affine semigroup ring $K[C]$},
%\newblock{Comm. Algebra},   {\bf 36}  (2008),  3122--3146.

\bibitem{Zha07}
W.~Zhang,
\newblock {\it On the highest Lyubeznik number of a local ring},
\newblock {Compos. Math.}
{\bf 143} (2007), 82--88.


\end{thebibliography}
\end{document}